\numberwithin{equation}{section}
\numberwithin{figure}{section}
\renewcommand*{\thefootnote}{\fnsymbol{footnote}}
\title{Wellposedness, exponential ergodicity and numerical approximation of fully super-linear McKean--Vlasov SDEs and associated particle systems}
\author{
\normalsize Xingyuan Chen\textit{$^{a}$} \\
        \small   X.Chen-176@sms.ed.ac.uk 
\and
 \normalsize Gon\c calo dos Reis\textit{$^{a,b,}$}\footnote{G.d.R. acknowledges support from the \emph{Funda{\c c}\~ao para a Ci\^{e}ncia e a Tecnologia} (Portuguese Foundation for Science and Technology) through the project UIDB/00297/2020 and UIDP/00297/2020 (Centro de Matem\'atica e Aplica\c c\~oes CMA/FCT/UNL), and by the UK Research and Innovation (UKRI) under the UK government’s Horizon Europe funding Guarantee [Project APP55638].
  } \\
        \small  G.dosReis@ed.ac.uk
\and
\normalsize Wolfgang Stockinger \textit{$^{c}$} \\
        \small   w.stockinger@imperial.ac.uk
}
\date{%
    \footnotesize 
    $^{a}$~School of Mathematics, University of Edinburgh, JCMB, 
    Peter Guthrie Tait Road, Edinburgh, EH9 3FD, UK
    \\
    $^{b}$~ Centro de Matem\'atica e Aplica\c c\~{o}es (Nova Math), FCT, UNL, 2829-516 Caparica, Portugal
    \\
    $^{c}$~Department of Mathematics, Imperial College London, London SW7 2AZ, UK
    \\
    \longdate \today \ (\currenttime)
    \vspace{-1.0cm}
}
\theoremstyle{plain}
\newtheorem{theorem}{Theorem}[section]
\newtheorem{lemma}[theorem]{Lemma}
\newtheorem{proposition}[theorem]{Proposition}
\newtheorem{definition}[theorem]{Definition}
\newtheorem{remark}[theorem]{Remark}
\newtheorem{example}[theorem]{Example}
\newtheorem{assumption}[theorem]{Assumption}
\newcommand{\bE}{\mathbb{E}}
\newcommand{\bF}{\mathbb{F}}
\newcommand{\bN}{\mathbb{N}}
\newcommand{\bP}{\mathbb{P}}
\newcommand{\bR}{\mathbb{R}}
\newcommand{\cF}{\mathcal{F}}
\newcommand{\cN}{\mathcal{N}}
\newcommand{\cP}{\mathcal{P}}
\newcommand{\1}{\mathbbm{1}}
\newcommand{\tnp}{{t_{n+1}}}
\newcommand{\dd}{\mathrm{d}}
\newcommand{\hx}{ \hat{X} }
\newcommand{\hs}{ \overline{\sigma} }
\newcommand{\hz}{ \hat{Z} }
\newcommand{\hm}{ \hat{\mu} }
\newcommand{\fs}{ f_{\sigma} }
\newcommand{\bdg}{  {\boldsymbol{g}} }
\begin{document}

\selectlanguage{english}

\maketitle
\renewcommand*{\thefootnote}{\arabic{footnote}}

\begin{abstract} 
We study a class of McKean--Vlasov Stochastic Differential Equations (MV-SDEs) with drifts and diffusions having super-linear growth in measure and space -- the maps have general polynomial form but also satisfy a certain monotonicity condition. The combination of the drift's super-linear growth in measure (by way of a convolution) and the super-linear growth in space and measure of the diffusion coefficient requires novel technical elements in order to obtain the main results. We establish wellposedness, propagation of chaos (PoC), and under further assumptions on the model parameters, we show an exponential ergodicity property alongside the existence of an invariant distribution. No differentiability or non-degeneracy conditions are required. 

Further, we present a particle system based Euler-type split-step scheme (SSM) for the simulation of this type of MV-SDEs. 
The scheme attains, in stepsize, the strong error rate $1/2$ in the non-path-space root-mean-square error metric and we demonstrate the property of mean-square contraction. 
Our results are illustrated by numerical examples including: estimation of PoC rates across dimensions, preservation of periodic phase-space, and the observation that taming appears to be not a suitable method unless strong dissipativity is present. 
\end{abstract}
{\bf Keywords:} 
McKean--Vlasov equations, split-step methods, ergodicity, interacting particle systems, super-linear growth in measure


\footnotesize
\tableofcontents
\normalsize


\section{Introduction}
In this work, we analyse a class of  McKean--Vlasov Stochastic Differential Equations (MV-SDEs) having drift and diffusion components of convolution type, akin to the porous media equation or interaction kernel modelling. \color{black}The main feature of the class is the joint \textit{super-linear growth in measure and space} in both drift and diffusion coefficients, concretely, super-linear in the sense that in \eqref{Eq:General MVSDE shape of v}, the maps $x\mapsto u(x,\cdot)$, $x \mapsto f(x)$ and $x\mapsto f_\sigma(x)$ are of polynomial growth (under some radial growth conditions). 
\color{black}

We work with MV-SDE dynamics of the form
\begin{align}
\label{Eq:General MVSDE}
\dd X_{t} &= \big( v(X_{t},\mu_{t}^{X}) + b(t,X_{t}, \mu_{t}^{X})\big)\dd t + \hs(t,X_{t}, \mu_{t}^{X})\dd W_{t},  \quad X_{0} \in L^m( \bR^{d}),~m >2
\\
\label{Eq:General MVSDE shape of v}
&
\textrm{where }~ 
\begin{cases}
    v(x,\mu)
&= \int_{\bR^{d}  } f(x-y) \mu(\dd y) +u(x,\mu),
\\ 
\hs(t,x,\mu)
&=\sigma(t,x,\mu)+\int_{\bR^{d}  } \fs(x-y) \mu(\dd y)
\end{cases},
\end{align}
where $\mu_{t}^{X}$ denotes the law of the solution process $X$ at time $t$, $L^{m}(\mathbb{R}^{d})$ is the space of {\color{black}$\mathcal{F}_0$-measurable} random variables with finite $m$-th moments, $W$ is a multidimensional Brownian motion, $u,b, \sigma$ and $f, f_{\sigma}$ are measurable maps.
Critically, $f,f_{\sigma},u,\sigma$ are  maps of super-linear growth but not assumed to be differentiable and $\hs$ may degenerate. The functions $u$ and $\sigma$ allows to incorporate measure dependencies other than convolution type. 

In terms of a particle dynamics modelling perspective, \eqref{Eq:General MVSDE}-\eqref{Eq:General MVSDE shape of v} model the dynamics of particle motion where the particle is affected by different sources of forcing. The map $u$ represents a multi-well gradient potential confining the particle and the convolution map $f$ contains information on the forces affecting the particles (e.g., attractive, repulsive), see \cite{HerrmannImkellerPeithmann2008,2013doublewell,adams2020large}. As argued in \cite{HerrmannImkellerPeithmann2008}, under certain assumptions, $v$ and $f$ add inertia to the particle's dynamic in turn affecting its exit time from a domain of attraction (by accelerating or delaying it) and alters exit locations \cite{HerrmannImkellerPeithmann2008,dosReisSalkeldTugaut2017,adams2020large,de2021reducing}. To motivate the study of equations with a (nonlinear) convolution term 
$(f_{\sigma} \ast \mu) (\cdot) := \int_{\bR^{d}  } \fs(\cdot-y) \mu(\dd y)$ in the diffusion component, which is the main feature of our work, we first mention \cite{MR3582237}. There, a Cucker--Smale model incorporating random communication is rewritten as a Cucker--Smale model with multiplicative noise (the diffusion coefficient has the form 
\color{black}
$\left(X_t-\mathbb{E}[X_t] \right)=\int_{\bR^d}(X_t-y)\mu_t(\dd y)$\color{black}), which helps to stabilize flocking states as the effect of the noise diminishes the closer the particles concentrate around their mean; see also \cite{MR2761313}. 
These works give a clear motivation to analyse convolution type diffusion maps (diffusions whose strength depends on the density) -- also \cite{Duan2010kineticflockingwithdiffusion}  studies a kinetic flocking model with a more general distance potential (communication rate) function than \cite{MR3582237}.  
In addition, \cite{MR2860672} considers general stochastic systems of interacting particles with Brownian noise to study  models for the collective behaviour (swarming) -- more particularly, \cite[Section 1.2.2]{MR2860672} highlights several open-question model extensions to nonlinear diffusion coefficients (though beyond the scope of this work). 
The recent works \cite{carrillo2018analytical,borghi2023constrained} investigate Consensus-Based Optimization (CBO) methods for solving  high-dimensional nonlinear unconstrained minimization problems. 
A CBO scheme updates the  particle's  position in an iterative manner to explore the optimization landscape. There, particles far away from the equilibrium state are expected to exhibit more exploration (i.e., the noise level should be larger) compared to particles close to it. Inspired by the above discussed works, we offer a new class of MV-SDEs adding a new element in the diffusion coefficient by means of a reversion to the population mean expressed through a \textit{fully non-Lipschitz} $f_{\sigma} \ast \mu$ significantly beyond the linear interaction diffusion coefficients studied in the mentioned works. 

More generally, the motivation to study this class of MV-SDEs and associated interacting particle systems is to present a unified framework to address wellposedness and establish properties useful for downstream applications. For instance, from emerging models of mean-field type in neuroscience \cite{erny2021strong}, understating particle motion and exit times \cite{HerrmannImkellerPeithmann2008,herrmann2012self,2013doublewell,adams2020large,MostLIkelyTransitionPath2022}, parametric inference \cite{GENONCATALOT2021,belomestny2021semiparametric,comte2022nonparametric}. 
We also point to Section 4 and 5 of \cite{jin2020random} for a variety of general interacting systems that are subsumed by our class. Our results can also be viewed as an addition to the literature on granular media type equations as studied in \cite{carrillo2003kinetic,guillin2019uniform,ren2021exponential}.

The existence and uniqueness of solutions to MV-SDEs, in a strong and weak sense, has been extensively studied, see e.g., \cite{RAY2,Sznitman1991,mishura2020existence,Lacker2018,HuangRenWang2021DDSDE-wellposedness,Malrieu2003,rockner2018well,C1,kalinin2022stability,mehri2020propagation,zacONE2023malliavin} and references therein, but \textit{none} cover the setting presented here. To the best of our knowledge, the existence and uniqueness of strong solutions to equations with super-linear growth in the measure component of the drift and the diffusion has not been addressed in general. There exist various works considering super-linearly growing coefficients (in state) but do not incorporate $f$ or $f_{\sigma}$, see e.g., \cite{mehri2020propagation,Wang2018DDSDE-LandauType,kumar2020wellposedness} and its references. 
In \cite{adams2020large} the authors deal with a super-linear $f$, $f_{\sigma} \equiv 0$,  and a (unbounded) uniformly Lipschitz continuous $\sigma$, and derive wellposedness (i.e., existence and uniqueness of a strong solution) and large deviation results. Further, \cite{zhang2021existence} allows for a setting similar to ours but requires upfront strong dissipativity and non-degeneracy. Their aim was to study  ergodicity, nonetheless, it is unclear how to adapt their methodology if working with the goal of proving wellposedness over $[0,T]$ under milder conditions. From the initial work \cite{adams2020large}, our goal is to develop a general framework to study \eqref{Eq:General MVSDE}-\eqref{Eq:General MVSDE shape of v} in terms of wellposedness (over $[0,T]$), with a super-linearly growing $\sigma$ and $f_{\sigma}$, ergodicity and approximation schemes. 
 
Our \textit{first main contribution} concerns wellposedness and propagation of chaos (PoC) results for the finite time horizon $[0,T]$ case. The critical nontrivial hurdle of this setting is in establishing $L^p$-moment bounds for $p>2$ under the  presence of the super-linear growths of $f$, $f_{\sigma}$ and $\sigma$ -- \textit{this issue appears solely due } to the simultaneous presence of nonlinearities (in space and measure) in the drift and diffusion, otherwise techniques like those of \cite{adams2020large} or \cite{kumar2020wellposedness} would suffice. 
To overcome this hurdle, we introduce a new condition dubbed `additional symmetry', which is new in the literature (to the best of our knowledge). For a quick perspective, we suggest a glance at Lemma \ref{AppendixLemma-aux1} and \ref{AppendixLemma-aux2} (in Appendix) and the proof of Theorem \ref{Thm:MV Monotone Existence} to see how one deals with the convolution terms and the necessity of the `additional symmetry' condition -- a discussion is presented in Remark \ref{rem-on-additionalSymmetry}. 
We also address a propagation of chaos result \cite{Sznitman1991,Meleard1996,fournier2015rate,delarue2018masterCLT,dosReisSalkeldTugaut2017} for this class. We show the interacting particle system, obtained by replacing $\mu^X$ by the system's $N$-particle empirical distribution, recovers the original MV-SDE in the particle limit $N\to \infty$. Under a very mild higher-integrability assumption, a convergence rate is obtained.

Our \textit{second main contribution} addresses another key element of MV-SDE theory which is the existence and uniqueness of an invariant probability measure and  exponential convergence to it (i.e., ergodicity). This is a particularly important property in applications involving statistical inference \cite{belomestny2021semiparametric} (usually in neuroscience \cite{ableidinger2017stochastic,buckwar2020spectral,buckwarbrehier2021FHNmodelandsplittingBSTT2020,genon2021parametric,genon2022inference}) or associated long-term behaviour connected to metastability \cite{2013doublewell,tugaut2013self,adams2020large}. 
   
    We extend the wellposedness result to the infinite time horizon and then analyse the long-time behaviour of our class of MV-SDEs. We prove an exponential ergodicity property and the existence of an invariant measure.
    The proof arguments loosely follow those of \cite{Wang2018DDSDE-LandauType,hu2021long,liu2022ergodicity} and, critically, do not make use of Lyapunov functions or the Krylov--Bogoliubov machinery. 
    In fact, to reach this type of results for McKean--Vlasov equations, the Krylov--Bogoliubov machinery is not a suitable one due to the nonlinearities of the involved semi-group and hence the classical tightness argument does not apply, see \cite{Wang2018DDSDE-LandauType,hu2021long,liu2022ergodicity}. 
    Lyapunov function arguments are also difficult to use for the particular MV-SDE class in this manuscript -- this is due to the presence of a convolution operator with a function $f$ (and $f_{\sigma}$) that is not of linear growth. If the polynomial exponent was applied to the convolution term, instead of being a convolution of it, then the Lyapunov machinery would successfully carry through \cite{liu2022ergodicity}. 
    For our case, an additional difficulty arises \textit{solely due} to the simultaneous presence of the super-linear growth in $f$, $f_{\sigma}$ and $\sigma$. The ergodicity/invariance proof arguments of  \cite{Wang2018DDSDE-LandauType,hu2021long,liu2022ergodicity} leverage the completeness of the space of probability measures with finite second order moments to identify the invariant measure, but our wellposedness result requires a '\textit{sufficiently integrable}' initial condition (with strictly more than second order moments). This issue leads to a more involved proof -- see discussion prior to Theorem \ref{Thm:ergodicity}.

    The \textit{third main contribution} of this work is a numerical method to approximate \eqref{Eq:General MVSDE}-\eqref{Eq:General MVSDE shape of v} over $[0,T]$ via its interacting particle system. Most of our theoretical results are only proven for the finite time case, but we successfully apply the scheme for the long-term simulation of a particle system as well. 
    There are presently many studies for numerical methods allowing super-linear spatial growth of drifts (and diffusions): Euler type methods, e.g., taming \cite{reis2018simulation}, time-adaptive \cite{reisinger2020adaptive}, semi-implicit methods \cite{2021SSM}, projection methods \cite{belomestny2018projected}; Milstein type methods e.g., \cite{kumar2020explicit,neelima2020wellposedness,bao2021first,bao2020milstein} with some allowing super-linear $\sigma$ in space. There are variations on the assumptions, but \textit{all} these contributions require drifts and diffusions to be globally Lipschitz continuous in measure (with respect to the Wasserstein distance with quadratic cost). Two recent contributions \cite{li2022strong,Mezerdi2021Caratheodory} allow for weaker continuity conditions than Lipschitz for the coefficients but require a linear growth in space and measure. Only \cite{chen2022SuperMeasure} allows for general super-linear growth in $f,u$ but still limits $\hs$ to satisfy  Lipschitz assumptions -- we detail below the differences between \cite{chen2022SuperMeasure} and this manuscript in more detail. 
    Lastly, we mention the recent work \cite{chen2024improved} proposing a non-Markovian type Euler scheme for infinite time horizon with a weak error rate of 1.5 for a class of (Langevin) MV-SDE with constant diffusion coefficients.
    
    The scheme we propose belongs to the split-step method (SSM) class. Such schemes were analysed for MV-SDEs with drifts which are Lipschitz in measure and diffusion coefficients satisfying  a uniform Lipschitz condition \cite{2021SSM}; the scheme appeared originally in \cite{higham2002strong} for standard SDEs. We follow the strategy of approximating (in time) the interacting particle system associated with the MV-SDE and using a quantitative propagation of chaos (PoC) convergence result, see \cite{bossytalay1997} and \cite{reisinger2020adaptive,neelima2020wellposedness,reis2018simulation} for earlier uses of this strategy. 
    From a methodological point of view, the convergence proof of our numerical scheme is different from any used to study MV-SDE numerical schemes in the literature; our highly non-linear setting forces us to draw on the stochastic $C$-stability and $B$-consistency mechanics proposed in \cite{2015ssmBandC}. Its use in the context of numerical schemes for MV-SDEs and interacting particle systems is novel in the literature  -- except for the very recent \cite{biswasetal2022RandomisedMS} that studies higher order strong scheme for MV-SDE under non-differentiability conditions using a randomisation method. In \cite{biswasetal2022RandomisedMS},  the authors work with generic Lipschitz assumptions and need to change the underpinning error norms to cope with the complexity arising from the randomization step due to an explicit non-differentiability assumption on the drift coefficient. Our approach and requirements differ, and so does the analysis (albeit similar at points). 
    We show that it is possible to work directly with the concepts of \cite{2015ssmBandC} to deal with the interacting particle system, see Section \ref{sec:C-B} -- we emphasize that the main goal of the analysis is to guarantee that  core moment estimates are uniformly independent of the number of particles $N$ of the interacting system (but may depend on the initial system's underlying dimension $d$). 
    
    Closest to our work with regards to the SSM is \cite{chen2022SuperMeasure}, where the authors propose an SSM scheme similar to the one here for interacting particle systems that have \eqref{Eq:General MVSDE} (with $f_{\sigma} \equiv 0$ and $\sigma$ globally Lipschitz continuous in space and measure) as limit. There, they overcome the barrier of super-linear growth in space and measure for the drift (that \cite{reis2018simulation,reisinger2020adaptive,2021SSM} do not), but work with a diffusion component of Lipschitz type (\cite{chen2022SuperMeasure} focuses solely on the numerical scheme not on wellposedness nor ergodicity). 
Our setting is more involved than \cite{chen2022SuperMeasure,2021SSM} and requires novel proof techniques to deal with the simultaneous super-linearity in drift and diffusion. 
In \cite{chen2022SuperMeasure}, higher order moments bounds of the discrete process could by obtained by commonly used assumptions like  $(\mathbf{A}^u,~\mathbf{A}^\sigma)$ (see Assumption \ref{Ass:Monotone Assumption} below).
For our situation, the super-linearity in the diffusion coefficient (in space and measure) is controlled by a drift satisfying a suitable one-sided Lipschitz condition. However, the simultaneous appearance of nonlinearities in the diffusion and the nonlinear convolution $f \ast \mu$ in the drift causes difficulties. 
This is the reason why, for the scheme in this manuscript, only $L^2$-moment bounds were established, and the proof methodology takes recourse in the stochastic $C$-stability and $B$-consistency mechanics of \cite{2015ssmBandC} (which does not require to establish bounds for higher order moments of the SSM). It remains unclear how to obtain higher moments.

    In terms of findings, we show the scheme achieves a strong convergence rate of order $1/2$ and establish sufficient conditions for mean-square stability in the sense of \cite[Definition 2.8]{2021SSM}. 
    We present several numerical examples of interest and for comparison we implement, without proof, two intuitive versions of taming methods \cite{reis2018simulation,neelima2020wellposedness}. 
    Our examples show the SSM to perform very well for the approximation of a solution to \eqref{Eq:General MVSDE} for $T<\infty$ in an $L^{2}$-sense or the approximation for the ergodic distribution. The numerical results using taming are mixed but hint which version can be expected to converge (theoretically). \textit{We show a surprising numerical divergence finding for taming given the choice of initial condition and which does not appear when using the SSM } (see our Section \ref{section:example: dw1}) -- this confirms the SSM as a stable/robust choice  of scheme for this class. 
    The SSM is shown to preserve periodicity of phase-space. 
    Lastly, we provide a numerical example with the aim of estimating the PoC rate across dimensions and which highlights a gap in the literature: we observe the rate of \cite{Meleard1996,delarue2018masterCLT} (not applicable to our setting) instead of those in \cite{fournier2015rate,CarmonaDelarue2017book1} (which are used to prove our PoC result). Future research focuses on the study of uniform in-time PoC results and strong convergence rates for the SSM on $[0,\infty)$.

\smallskip 
\textbf{Organization of the paper.} Section \ref{sec:two} contains: notations, framework, wellposedness and ergodicity results, the particle approximation and the propagation of chaos statement, and the numerical scheme alongside associated convergence results. Several numerical examples are provided in Section \ref{sec:examples}. They cover the non-dissipative case in short and long time horizons; approximation of the invariant distribution; preservation of periodicity in phase-space and numerical estimation of PoC rates across dimension. 
All proofs are postponed to Section \ref{section: proof of the main results}. Generic auxiliary results are given in the Appendix.



\section{Main results}
\label{sec:two}

\subsection{Notation and Spaces}

We follow the notation and framework set in  \cite{adams2020large,2021SSM}. 
Let $\bN$ be the set of natural numbers starting at $0$ and for $a,b\in \bN$ with $a\leq b$,  define $\llbracket a,b\rrbracket:= [a,b] \cap \bN =  \{a,\ldots,b\}$. 
For $x,y \in \bR^d$ denote the inner product of vectors by   $\langle x, y\rangle$, 
and $|x|=(\sum_{j=1}^d x_j^2)^{1/2}$ the Euclidean distance. 
Let $\1_B$ be the indicator function of the set $B\subset \bR^d$. For a matrix $A \in \bR^{d\times l}$ we denote by $A^\intercal$ its transpose and its Frobenius norm by $|A|=\textit{Trace}\{A A^\intercal\}^{1/2}$.  
 
We introduce on the measurable space $(\bR^d,\mathcal{B}(\mathbb{R}^d))$, where $\mathcal{B}(\mathbb{R}^d)$ denotes the Borel $\sigma$-field over $\mathbb{R}^d$, the set of all probability measures $\cP(\bR^d)$ and its subset $\cP_r(\bR^d)$ of those with finite $r\in [1,\infty)$ moment. The space $\cP_r(\bR^d)$ is a Polish space when endowed with the Wasserstein distance
\begin{align*} 
W^{(r)}(\mu,\nu) := \inf_{\pi\in\Pi(\mu,\nu)} \Big(\int_{\bR^d\times \bR^d} |x-y|^r\pi(\dd x,\dd y)\Big)^\frac1r, \quad \mu,\nu\in \cP_r(\bR^d),
\end{align*}   
where $\Pi(\mu,\nu)$ is the set of couplings for $\mu$ and $\nu$ such that $\pi\in\Pi(\mu,\nu)$ is a probability measure on $\bR^d\times \bR^d$ with $\pi(\cdot\times \bR^d)=\mu$ and $\pi(\bR^d \times \cdot)=\nu$. 
For a given function $f$ with domain on $\bR^d$, $x\in\bR^d$ with $\mu \in \cP(\bR^d)$, the convolution operator $\ast$ is defined as $(f\ast \mu) (x):=\int_{\bR^d} f(x-y) \mu(\mathrm{d}y)$.
\color{black}
Let our probability space be a completion of $(\Omega, \bF, \cF,\bP)$ with $\bF=\lbrace \cF_t \rbrace_{t\geq 0}$ being the natural filtration of the Brownian motion $W=(W^1,\ldots,W^l)$ with $l$-dimensions, 
{\color{black}  
augmented with a sufficiently rich sub $\sigma$-algebra $\cF_0$ independent of $W$. We denote by $\bE[\cdot]=\bE^\bP[\cdot]$ the usual expectation operator with respect to $\bP$.  }

We consider some finite terminal time $T<\infty$ and use the following notation for spaces, which are standard in the literature \cite{reis2018simulation,2021SSM}. For $p \geq 1$, we denote by $L^{p}\left(\Omega,\mathcal{F}_t,\mathbb{P};\mathbb{R}^{d}\right)$ the space of $\mathbb{R}^{d}$-valued, $\mathcal{F}_t$-measurable random variables $X$ with finite $|X|_{L^{p}(\Omega,\mathcal{F}_t,\mathbb{P};\mathbb{R}^d)}$-norm given by  $|X|_{L^{p}( \Omega,\mathcal{F}_t,\mathbb{P};\mathbb{R}^d)}=\mathbb{E}\left[|X|^{p}\right]^{1 / p}$.  
Define $\mathbb{S}^{p}([0,T])$, as the space of $\mathbb{R}^{d}$-valued, $\mathbb{F}$-adapted, continuous processes $Z$ with finite $\|Z\|_{\mathbb{S}^{p}}$-norm defined as  $\|Z\|_{\mathbb{S}^{p}}=\mathbb{E}\left[\sup _{0 \leqslant t \leqslant T}|Z_t|^{p}\right]^{1 / p}$.

Throughout the text, $C$ denotes a generic positive real-valued constant that may depend on the problem's data, and change from line to line, but is always independent of the constants $h,M,N$ (associated with the numerical scheme and specified below).

\subsection{Framework}

Let $v:\bR^d \times\cP_2(\bR^d) \to \bR^d$, $b:[0,\infty) \times \bR^d \times\cP_2(\bR^d) \to \bR^d$ and $\hs:[0,\infty) \times \bR^d \times \cP_2(\bR^d) \to \bR^{d\times l}$ be measurable maps.
The MV-SDE of interest of this work is Equation \eqref{Eq:General MVSDE} (for some $m > 2$), where $\mu_{t}^{X}$ denotes the law of the process $X$ at time $t$, i.e., $\mu_{t}^{X}=\bP\circ X_t^{-1}$. 
We make the following assumptions on the coefficients.
\begin{assumption}
\label{Ass:Monotone Assumption}
The functions $b$ and $\sigma$ are $1/2$-H\"{o}lder continuous in time, uniformly in $x\in \bR^d$ and $\mu\in \cP_2(\bR^d)$ and $\sup_{t \in [0,\infty)} \big(|b(t,0,\delta_{0})| + |\sigma(t,0,\delta_{0})| \big)   \leq L$, for some constant $L \geq 0$.  
\\
$ (\mathbf{A}^b)$ Let $b $ be uniformly Lipschitz continuous in the sense that there exists $L_{(b)}^{(1)},~L_{(b)}^{(3)} \ge0$ and $ L_{(b)}^{(2)}\in \bR$ such that for all $t \in[0,\infty)$, $x, x'\in \bR^d$ and $\mu, \mu'\in \cP_2(\bR^d)$ we have that 
\begin{align*}
|b(t, x, \mu)-b(t, x', \mu')|^2
&\leq L_{(b)}^{(1)} \big(|x-x'|^2 + \big( W^{(2)}(\mu, \mu')\big)^2 \big),
\\
\langle x-x', b(t, x, \mu)-b(t, x', \mu') \rangle
&\leq L_{(b)}^{(2)} |x-x'|^2  + L_{(b)}^{(3)} \big( W^{(2)}(\mu, \mu')\big)^2 
.
\end{align*}
$(\mathbf{A}^u,~\mathbf{A}^\sigma)~$ Let $u, \sigma$  satisfy: there exist $ L^{(1)}_{(u\sigma)} \in \bR$, and $L^{(2)}_{(u\sigma)},L^{(3)}_{(u\sigma)},L^{(4)}_{(u\sigma)},q \ge 0$ 
such that for all $t\in[0,\infty)$, $ x, x'\in \bR^d$ and $\mu, \mu'\in \cP_2(\bR^d)$, with $m>2$ in \eqref{Eq:General MVSDE},  we have that 
\begin{align}
\label{eq:condition:driftoffsetdiffusion}
\color{black}
 &\langle x-x', u(x,\mu) -  u(x',\mu') \rangle +  2(m-1)  |\sigma(t, x, \mu)- \sigma(t, x', \mu') |^2 
\\\nonumber
&\quad\quad
\leq L^{(1)}_{(u\sigma)} |x-x'|^{2}+L^{(2)}_{(u\sigma)}\big(W^{(2)}(\mu, \mu') \big)^2, 
\\ 
\label{eq:condition:drifdiffusiongrowth}
&|u(x,\mu)-u( x',\mu)|  + |\sigma(t, x, \mu)-\sigma(t, x', \mu)|
\\ \nonumber
&\quad\quad
\leq L^{(3)}_{(u\sigma)}(1+ |x|^{q} + |x'|^{q}) |x-x'|,
\\ \nonumber
&|u(x,\mu)-u( x,\mu')|^2 + |\sigma(t, x, \mu)-\sigma(t, x, \mu')|^2
\leq 
L^{(4)}_{(u\sigma)} \big( W^{(2)}(\mu, \mu')\big)^2.  
\end{align}

$(\mathbf{A}^f,~\mathbf{A}^{\fs})~$ Let $f, \fs$  satisfy: there exist $ L^{(1)}_{(f)},L^{(3)}_{(f)}\in \bR$, and  $L^{(2)}_{(f)},q\ge 0$, such that for all $ x, x'\in \bR^d$, $2< p\leq m $,  we have that 
\begin{align*}
\langle x-x', f(x)-f(x') \rangle 
+
2(m-1)&|\fs(x) - \fs(x')|^2
\\
\qquad 
\leq L^{(1)}_{(f)}|x-x'|^{2},
 &\qquad \textrm{(One-sided Lipschitz, monotonicity condition)},
\\
 |f(x)-f( x')| 
+
|\fs(x) - \fs(x')
&\leq L^{(2)}_{(f)}(1+ |x|^{q} + |x'|^{q}) |x-x'| ,
~ \textrm{(Locally Lipschitz)},
\\
f(x)&=-f(-x), 
\qquad \qquad \qquad \qquad  \qquad \qquad \textrm{(Odd function)},
\\ 
( |x|^{p-2}-|x'|^{p-2})\langle  x+x', f(x-x') \rangle
&\leq L^{(3)}_{(f)}(|x|^{p}+|x'|^{p}  )
\qquad   \qquad \textrm{(Additional symmetry)}
. 
\end{align*}

Assume the normalization\footnote{This constraint is not restrictive since the framework allows to easily redefine $f$ as $\hat f(x):=f(x)-f(0)$ with $f(0)$ merged into $b$.} $f(0)=f_{\sigma}(0)=0$.
\end{assumption}  

\begin{remark}[Time dependency for $u$]
To avoid added complexity to an already complex work, we do not address time-dependence on $u$. A close inspection of the proof for wellposedness and convergence of the numerical scheme shows that as long as the time dependence does not interfere with constraints imposed by Assumption \ref{Ass:Monotone Assumption} the results will hold. Additionally one would require a $1/2$-H\"{o}lder continuity property for the function.
\end{remark}

All elements in the above assumption are standard, except the `additional symmetry' restriction. 
The `additional symmetry' is a new type of restriction which we have not found previously in the literature and we discuss it in more detail at several points in the text, in particular, in Remark \ref{rem-on-additionalSymmetry}. 

This condition is trivially satisfied when $d=1$ (see \eqref{eq:additionalsymmetric-IN-1D}) or when the function is {\color{black} of linear growth}. We next provide a non-trivial example in $d>1$  for $f$ satisfying the `extra symmetry' condition.
\begin{example} 
For $x\in\bR^d$ define $f(x)=-x|x|^2$. Then, for any $p> 2$, $x,y\in\bR^d$ it holds that 
\begin{align*}
    (|x|^{p-2}-|y|^{p-2} )\langle  x+ y, -(x-y)|x-y|^2   \rangle    
    &=  - (|x|^{p-2}-|y|^{p-2} ) (|x|^{2}-|y|^{2} )
    |x-y|^2
    \leq 0,
\end{align*}
\color{black} 
and the conclusion follows from the monotonicity of the polynomial function. 
\end{example}

\begin{remark}[Implied properties]
\label{remark:ImpliedProperties}
Let Assumption \ref{Ass:Monotone Assumption} hold with $m>2$. We provide the following estimates for some positive constant $C$ which may change line by line, {\color{black}which are derived using the one-sided Lipschitz condition and Young's inequality }(see \cite[Remark 2.2]{chen2022SuperMeasure} for details). For all $t \in [0,T]$, $x,x',z\in \bR^{d}$ and $\mu,\mu'\in \cP_2(\bR^d)$,   we have 
\begin{align}
    \label{eq: remark eq1}
    \langle x,f(x)\rangle + 2(m-1) |\fs(x)|^2  &\leq L^{(1)}_{(f)}|x|^2,
    \\
    \label{eq: remark eq2}
    | b(t,x,\mu)|^2
   &\le
   C \big(1+|x|^2+(W^{(2)}(\mu,\delta_0))^2 \big),
    \\
    \label{eq: remark eq3}
        \langle x,u(x,\mu)\rangle +(m-1) | \sigma(t,x,\mu)|^2
    &\leq
    C \big(1+ |x|^2 +  (W^{(2)}(\mu,\delta_0))^2 \big)
    ,
    \\\label{eq: remark eq4}
    \langle x-x',u(x,\mu)-u(x',\mu') \rangle 
     &\le 
     C \big(  |x-x'|^2 +  (W^{(2)}(\mu,\mu'))^2 \big),  
    \\
    \label{eq: remark eq5}
    \langle x,b(t,x,\mu) \rangle 
    &\le
    C(1+|x|^2+(W^{(2)}(\mu,\delta_0))^2 ),
    \\
    \label{eq: remark eq6}
    \langle x-x',v(x,\mu)-v(x',\mu) \rangle
     &\le (L^{(1)}_{(u\sigma)}+L^{(1)}_{(f)}) |x-x'|^2, 
   \\
   \nonumber 
   |\hs(t,x,\mu)|^2
   &\leq 2|\sigma(t,x,\mu)|^2
   +
   2\big|\int_{\bR^{d}  } \fs(x-y) \mu(\dd y)\big|^2
   \\
   \label{eq: remark eq7}
   & \leq
   2\Big( |\sigma(t,x,\mu)|^2
   +
   \int_{\bR^{d}  } |\fs(x-y) |^2\mu(\dd y) \Big) 
   .
\end{align}

Let $f:\bR\rightarrow\bR$  satisfying the one-sided Lipschitz condition, then $f$ satisfies the additional symmetry condition, i.e., for $x,y\in \bR, x\neq y, p\ge 2$, we have
\color{black}   
\begin{align}
\nonumber 
    (|x|^{p-2}-|y|^{p-2} )\langle  x+ y, f(x-y)  \rangle 
    &=\frac{(|x|^{p-2}-|y|^{p-2} )(x+y) }{ x-y  }
        \langle x-y  , f(x-y)   \rangle
    \\ \label{eq:additionalsymmetric-IN-1D}
    &\leq C (|x|^p+|y|^p).
\end{align}
\color{black}
The following decomposition is crucial for the remaining parts of this work
\color{black} 
for $x\in\bR^d$, $m \geq p > 2$, $\mu \in \cP_m(\bR^d)$ it holds that 
\begin{align}
    \nonumber
    \int_{\bR^d}& \int_{\bR^d} |x|^{p-2} \langle  x, f(x-y)  \rangle \mu(\mathrm{d}y)\mu( \mathrm{d}x)
    = \frac{1}{2}
    \int_{\bR^d} \int_{\bR^d}  \langle |x|^{p-2} x
    - |y|^{p-2} y
    , f(x-y)  \rangle \mu(\mathrm{d}y)\mu(\mathrm{d}x)
    \\
    \nonumber
    & = \frac{1}{2}
     \int_{\bR^d} \int_{\bR^d}  \langle (|x|^{p-2} x
    - |x|^{p-2} y) + (|x|^{p-2} y
    - |y|^{p-2} y)
    , f(x-y)  \rangle \mu(\mathrm{d}y)\mu(\mathrm{d}x)
    \\
    \label{eq: lp moment expectation result}
    &= \int_{\bR^d} \int_{\bR^d}
    \big( \tfrac{1}{2}
    |x|^{p-2}\langle  x-y
    , f(x-y)  \rangle
    +\tfrac{1}{4}
    (|x|^{p-2} - |y|^{p-2}  ) 
    \langle x+ y
    , f(x-y)  \rangle
    \big)
     \mu(\mathrm{d}y)\mu(\mathrm{d}x).
\end{align}
{\color{black}   
The decomposition in  \eqref{eq: lp moment expectation result} along with $(\mathbf{A}^f,~\mathbf{A}^{\fs})~$ will be used to incorporate the nonlinearity of $f_{\sigma}$.}
    
\end{remark}

\subsection{Existence, uniqueness and  ergodicity of the MV-SDE}

Let us start by stating the wellposedness result of MV-SDE \eqref{Eq:General MVSDE}. 
\begin{theorem}[Wellposedness] 
\label{Thm:MV Monotone Existence}

	Let Assumption \ref{Ass:Monotone Assumption} hold with $m> 2q+2$, then there exists a unique strong solution $X$ to MV-SDE \eqref{Eq:General MVSDE} satisfying the following estimates:   
	For some constant $C>0$, we have a pointwise estimate 
\begin{align*}
	\sup_{t\in[0,T]} \mathbb E \big[  |X_{t}|^{\widetilde m} \big] 
	\leq C \left(1+ \bE[|X_0|^{\widetilde m}]\right) e^{C T},\qquad \textrm{for any }~ \widetilde m \in [2,m].
\end{align*} 
\end{theorem}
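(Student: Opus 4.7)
The plan is the classical two-stage approach: (i) freeze the measure argument and invoke standard monotone SDE theory to produce a map $\Phi$ on measure flows $t\mapsto \mu_t \in \cP_2(\bR^d)$, then (ii) realize the solution to \eqref{Eq:General MVSDE} as a fixed point of $\Phi$. Concretely, for a given flow $\mu\in C([0,T];\cP_2(\bR^d))$ with sufficient integrability, the frozen SDE $\dd Y_t=(v(Y_t,\mu_t)+b(t,Y_t,\mu_t))\dd t+\hs(t,Y_t,\mu_t)\dd W_t$ has coefficients satisfying a one-sided Lipschitz and local Lipschitz condition in $x$ (combining $(\mathbf{A}^b)$, $(\mathbf{A}^u,\mathbf{A}^\sigma)$, $(\mathbf{A}^f,\mathbf{A}^{\fs})$ and the bounds collected in Remark \ref{remark:ImpliedProperties}), so that standard results (e.g.\ Krylov--Rozovskii) yield a unique strong solution $Y^{\mu}$. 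Defining $\Phi(\mu)_t:=\Law(Y^{\mu}_t)$, we need to (a) find an invariant ball on which $\Phi$ maps into itself via a priori moment bounds, and (b) prove $\Phi$ has at most one fixed point via a Wasserstein contraction argument on a short interval, then iterate.

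\textbf{A priori $L^{\widetilde m}$ bounds — the main obstacle.} The critical difficulty, as stressed in the introduction, is to propagate $L^{\widetilde m}$ moments for $\widetilde m\in(2,m]$ uniformly along the Picard iteration, because of the simultaneous super-linear growths of $f$, $f_\sigma$ and $\sigma$. I apply It\^o's formula to $|Y_t|^{\widetilde m}$, take expectation, and isolate the problematic convolution terms. For the drift convolution I use the symmetrization identity \eqref{eq: lp moment expectation result}: for $\mu=\mu_t^{Y}$,
\begin{align*}
\bE\Big[|Y_t|^{\widetilde m-2}\langle Y_t, (f\ast \mu)(Y_t)\rangle\Big]
= \int\!\!\int \Big(\tfrac{1}{2}|x|^{\widetilde m-2}\langle x-y,f(x-y)\rangle+\tfrac{1}{4}(|x|^{\widetilde m-2}-|y|^{\widetilde m-2})\langle x+y,f(x-y)\rangle\Big)\mu(\dd y)\mu(\dd x).
\end{align*}
The first piece is dominated by $L^{(1)}_{(f)}|x-y|^2$ via the one-sided Lipschitz property, and the second — which is the genuinely new obstruction — is bounded by $C(|x|^{\widetilde m}+|y|^{\widetilde m})$ precisely thanks to the \emph{additional symmetry} clause of $(\mathbf{A}^f,\mathbf{A}^{\fs})$. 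The diffusion convolution is treated similarly: the It\^o correction produces $\tfrac{\widetilde m(\widetilde m-1)}{2}\bE[|Y_t|^{\widetilde m-2}|\hs(t,Y_t,\mu_t)|^2]$, and the factor $2(m-1)$ placed in front of $|\fs(x)-\fs(x')|^2$ and $|\sigma(t,x,\mu)-\sigma(t,x',\mu')|^2$ in $(\mathbf{A}^u,\mathbf{A}^\sigma)$–$(\mathbf{A}^f,\mathbf{A}^{\fs})$ is exactly what absorbs this correction for $\widetilde m\le m$. The local Lipschitz condition with exponent $q$ allows a Young inequality of the form $|x|^{\widetilde m-2}|y|^{2q+2}\lesssim |x|^{\widetilde m}+|y|^{\widetilde m}$ provided $\widetilde m\ge 2q+2$, which is why the assumption $m>2q+2$ appears; for $\widetilde m\in[2,2q+2]$ one first gets the bound at $\widetilde m=m$ and then interpolates. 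Collecting terms gives a closed Gr\"onwall inequality for $t\mapsto\bE[|Y_t|^{\widetilde m}]$ depending only on $\bE[|X_0|^{\widetilde m}]$ and $\sup_{s\le T}W^{(2)}(\mu_s,\delta_0)$, so by restricting $\Phi$ to flows satisfying the resulting pointwise bound, one obtains the invariant set and the claimed estimate.

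\textbf{Fixed point and uniqueness.} With the a priori bound at hand, I run a Picard iteration $\mu^{n+1}:=\Phi(\mu^n)$ on a short interval $[0,\tau]$. For two measure inputs $\mu,\nu$ and the associated solutions $Y^{\mu},Y^{\nu}$, It\^o on $|Y^\mu_t-Y^\nu_t|^2$ combined with $(\mathbf{A}^b)$, the one-sided Lipschitz line in $(\mathbf{A}^u,\mathbf{A}^\sigma)$, and the monotonicity/Lipschitz-in-measure bounds for the convolutions (handled as in Lemmas \ref{AppendixLemma-aux1}–\ref{AppendixLemma-aux2}, using $\widetilde m$-moment control to tame the super-linear terms $|x|^{q}$) yields
\begin{align*}
\bE\big[|Y^\mu_t-Y^\nu_t|^2\big]\le C\int_0^t \big(W^{(2)}(\mu_s,\nu_s)\big)^2\,\dd s,
\end{align*}
which implies contraction of $\Phi$ in $\sup_{t\le \tau}W^{(2)}(\cdot,\cdot)^2$ for $\tau$ small enough depending only on $C$. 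Banach's fixed point theorem on $[0,\tau]$, followed by concatenation over $[0,T]$ using the uniform moment bound as the new initial datum, delivers existence and uniqueness of a strong solution; pathwise uniqueness on $[0,T]$ follows from the same monotonicity estimate applied to two hypothetical solutions with the same initial data and driving Brownian motion. The hardest step throughout is the moment propagation in the second paragraph, where \eqref{eq: lp moment expectation result} together with the additional symmetry and the $2(m-1)$-calibrated diffusion conditions is the decisive new ingredient.
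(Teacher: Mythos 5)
Your overall two-stage plan (freeze, solve, contract) is the classical MV-SDE template, but it is emphatically \emph{not} what the paper does, and the place where you invoke the symmetrization identity is where the argument breaks. The identity \eqref{eq: lp moment expectation result} is a symmetry of a \emph{double} integral against the \emph{same} measure: it rewrites $\int\!\int |x|^{p-2}\langle x,f(x-y)\rangle\,\mu(\dd y)\mu(\dd x)$ by swapping $x\leftrightarrow y$. In your a priori $L^{\widetilde m}$-moment step you have frozen the measure flow $(\mu_t)_t$ and are integrating It\^o for $|Y^\mu_t|^{\widetilde m}$; the double integral that appears is $\int\!\int |x|^{\widetilde m-2}\langle x, f(x-y)\rangle\,\mu_t(\dd y)\,\nu_t(\dd x)$ with $\nu_t=\Law(Y^\mu_t)\neq\mu_t$ during the Picard iteration. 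That integral has no $x\leftrightarrow y$ symmetry, so \eqref{eq: lp moment expectation result} and the ``additional symmetry'' clause cannot be applied. If you instead try the naive decomposition $\langle x,f(x-y)\rangle=\langle x-y,f(x-y)\rangle+\langle y,f(x-y)\rangle$, the second piece produces $\bE[|Y^\mu_s|^{\widetilde m-2+q+1}]\cdot\int|y|\,\mu_s(\dd y)$, which forces moments of $Y^\mu$ of order strictly greater than $m$, and the Gr\"onwall estimate never closes. This is exactly the circularity the paper flags in Remark~\ref{rem-on-additionalSymmetry}: the symmetrization trick is only available for the \emph{actual} MV-SDE solution where the integrating measure coincides with the law. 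So ``by restricting $\Phi$ to flows satisfying the resulting pointwise bound'' presupposes the bound you cannot yet prove for $\Phi(\mu)$, and the invariant set does not come for free.

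The paper sidesteps this by iterating in a \emph{function} space $\Lambda_{[0,T],q}$ rather than in measure-flow space: the convolution pair $(f\ast\mu,\,f_\sigma\ast\mu)$ is promoted to an abstract pair $\boldsymbol{g}=(g_1,g_2)$ constrained only by the norm \eqref{eq: def of g norm} (growth of order $q+1$) and a monotonicity condition \eqref{eq:proof_monot}. The auxiliary equation \eqref{eq:mv_fixed} is then an MV-SDE with polynomially growing coefficients but \emph{Lipschitz} measure dependence (only $u,b,\sigma$ depend on $\mu$, and those are Lipschitz in $W^{(2)}$); existence and $L^m$ bounds for it follow from \cite{biswas2020well,kumar2020wellposedness} without any circularity, because the growth of $g_1,g_2$ is capped uniformly on the ball $E$ by the function-space norm rather than by a moment of the unknown iterate. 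The symmetrization/additional-symmetry argument is then used once, a posteriori, for the genuine fixed point $X$ (where $\mu^X_t=\Law(X_t)$), giving the $T_0$-uniform estimate \eqref{eq: moment bound for X} that allows bootstrapping from $[0,T_0]$ to $[0,T]$. To repair your proof you would need to either switch to the paper's function-space fixed point, or produce a genuinely new a priori $L^m$ estimate for the \emph{frozen-measure} SDE that does not rely on the symmetrization trick; the paper explicitly states it does not know how to do the latter.
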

The proof of the wellposedness theorem is postponed to Section \ref{subsection: proof of well-pose and momentbound}.  
\begin{remark}[On the `additional symmetry' restriction]
\label{rem-on-additionalSymmetry}
The critical element of the proof for this result, is the difficulty in establishing (finite) bounds for higher order moments of the solution process. 
The `additional symmetry' assumption is a technical condition without which we were not able to establish $L^p$-moment bounds for $p>2$ (and $d>1$) -- proving $L^2$-moment bounds or uniqueness of the solution is straightforward and the condition is not needed. The requirement of `additional symmetry' stems solely from having a super-linearly growing $\sigma$, $f_{\sigma}$ \textit{and} a super-linear growth of the convolution term appearing in the drift. If either of them is of linear growth (or $d=1$), then the `additional symmetry' condition can be removed and the results hold.

The strategy used in \cite{adams2020large} to establish $L^p$-moment bounds, working with Assumption \ref{Ass:Monotone Assumption} but with a linearly growing $\sigma$, is to bound $\bE\big[|X_{t}|^{2p} \big]$ via 
    \begin{align*}
    \bE\big[|X_{t}|^{2p} \big] 
    &\le
     C \big( \bE\big[\big |X_{t}-\bE[X_t]\big|^{2p} \big]+   \bE\big[|X_t|^2\big]^{p}  \big),
    \end{align*}
and then noticing that     
    \begin{align*}
     \bE\big[\big |X_{t}-\bE[X_t]\big|^{2p} \big]
       & =
       \int_{\bR^d} 
         \big|x-\int_{\bR^d} y\mu_t(\dd y) \big|^{2p}
         \mu_t(\dd x)
      \\ &
      \le
         \int_{\bR^d} \int_{\bR^d}
         |x- y |^{2p}\mu_t(\dd y)
         \mu_t(\dd x) 
    =  \bE\big[|X_{t}-\tilde{X}_t|^{2p} \big], 
    \end{align*}
with $\tilde X$ an independent copy of $X$ driven by its independent Brownian motion, see Lemma \ref{AppendixLemma-aux1} and Lemma \ref{AppendixLemma-aux2} for extra details. This trick allows to deal with the convolution term, employing its symmetry, (see Lemma \ref{AppendixLemma-aux2}), but does not give control of the super-linear diffusion. To be precise, It\^o's formula applied to $|X-\tilde X|^{2p}$ forces one to use the polynomial growth condition on $\sigma$ \eqref{eq:condition:drifdiffusiongrowth}, which  involves higher moments, instead of \eqref{eq:condition:driftoffsetdiffusion}.

Without the trick described above, and following more classical approaches \cite[Theorem 2.1]{neelima2020wellposedness}, it is possible to control the super-linear growth of $\sigma$ in space (via \eqref{eq:condition:driftoffsetdiffusion}) but it is unclear how to simultaneously control the super-linear growth of the convolution terms in a tractable way (the tricks of Lemma \ref{AppendixLemma-aux1} and Lemma \ref{AppendixLemma-aux2} do not carry over). 

All in all, there is competition between the growths of $f$ and $\sigma$, $f_{\sigma}$, and neither just described technique is adequate to establish $L^p$-moment estimates. The `additional symmetry' condition offsets this difficulty. See details in the proof in Section \ref{subsection: proof of well-pose and momentbound}. Lifting this restriction is left as an open question.  
\end{remark}


\subsection{Particle approximation of the MV-SDE}

We now turn to the particle approximation of the MV-SDE with the ultimate goal of establishing a working numerical scheme for the equation. All results here are only concerned with the finite-time case. 

As in \cite{bossytalay1997,reisinger2020adaptive,2021SSM}, we approximate the MV-SDE \eqref{Eq:General MVSDE} (driven by the Brownian motion $W$) by an interacting particle system, i.e., an $N$-dimensional system of $\bR^d$-valued interacting particles. Let $i\in \llbracket 1,N\rrbracket$ and consider $N$ particles $(X_t^{i,N})_{t\in[0,T]}$ with independent and identically distributed (i.i.d.)~initial data ${X}_{0}^{i,N}=X_{0}^{i}$ (an independent copy of $X_0$) satisfying the $\bR^{Nd}$-valued SDE with components
\begin{align}
\label{Eq:MV-SDE Propagation}
\dd {X}_{t}^{i,N} 
= \big( v(X_t^{i,N}, \mu^{X,N}_{t} )+ b(t,{X}_{t}^{i,N}, \mu^{X,N}_{t} )\big) \dd t 
+ \hs(t,{X}_{t}^{i,N} , \mu^{X,N}_{t} ) \dd W_{t}^{i}
, \quad X^{i,N}_0=X_0^i, 
\end{align}
where $\mu^{X,N}_{t}(\dd x) := \frac{1}{N} \sum_{j=1}^N \delta_{X_{t}^{j,N}}(\dd x)$ with $\delta_{x}$ being the Dirac measure at $x \in \mathbb{R}^d$, and  $W^{i}$ being independent Brownian motions (also independent of the Brownian motion appearing in \eqref{Eq:General MVSDE}).
\color{black}  
We introduce similarly to \cite[Remark 2.4]{chen2022SuperMeasure} the auxiliary maps $V$, and $\hat{\Sigma}$ to view \eqref{Eq:MV-SDE Propagation} as a system in $\bR^{Nd}$.
\begin{lemma}[Properties of the particle system as a system in $\bR^{Nd}$]
\label{remark:OSL for the whole function / system V}
 Define $V :\bR^{Nd}\to \bR^{Nd}$, $ \hat{\Sigma}: [0,T]\times \bR^{Nd} \to \bR^{Nd \times Nl}$ by $V(x^N)= \big(\ldots,v(x^{i,N}, \mu^{x,N}),\ldots \big),$ and $\hat{\Sigma}(t,x^N)=\big(\ldots,\hs(t,x^{i,N}, \mu^{x,N}),\ldots \big)$ with $x^N=(x^{1,N},\ldots,x^{N,N} )\in \bR^{Nd}$, $~t\in[0,T]$. 
 
 Then, under Assumption \ref{Ass:Monotone Assumption} with $m>2$, for any $x^{N}, y^{N}\in \bR^{Nd}$ with corresponding empirical measures $\mu^{x,N} = \tfrac{1}{N} \sum_{j=1}^{N} \delta_{x^{j,N}}$, and $\mu^{y,N} = \tfrac{1}{N} \sum_{j=1}^{N} \delta_{y^{j,N}}$, the functions $V,~\hat{\Sigma}$ also satisfy a One-sided Lipschitz  {\color{black}    (see first item of $(\mathbf{A}^f,~\mathbf{A}^{\fs})~$ in Assumption \ref{Ass:Monotone Assumption})} in $\bR^{N d}$ (with constants independent of $N$).

\end{lemma}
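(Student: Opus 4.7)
The plan is to decompose the target expression $\langle x^N - y^N, V(x^N) - V(y^N)\rangle + 2(m-1)|\hat\Sigma(t, x^N) - \hat\Sigma(t, y^N)|^2$ into an $(u,\sigma)$-block and a convolution $(f,\fs)$-block, and to apply the two corresponding items of Assumption \ref{Ass:Monotone Assumption} block-wise. Expanding the inner product and squared norm over the $N$ particle coordinates, and writing $v = u + f\ast\mu^{x,N}$ and $\hs = \sigma + \fs\ast\mu^{x,N}$, the elementary inequality $|a+b|^2\leq 2|a|^2+2|b|^2$ applied summand-wise to $\hs$ reduces matters to proving the analogous monotonicity estimates for each block separately.

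For the $(u,\sigma)$-block I would apply \eqref{eq:condition:driftoffsetdiffusion} particle-by-particle and sum over $i\in\llbracket 1,N\rrbracket$. The measure-dependent right-hand side is controlled via the canonical empirical coupling $\pi = \frac{1}{N}\sum_j \delta_{(x^{j,N}, y^{j,N})}$, which yields $\big(W^{(2)}(\mu^{x,N}, \mu^{y,N})\big)^2 \leq \frac{1}{N}\sum_j |x^{j,N} - y^{j,N}|^2$; the factor $N$ in front of $L^{(2)}_{(u\sigma)}(W^{(2)})^2$ then cancels, producing a bound $C_1|x^N-y^N|^2$ with $C_1$ independent of $N$. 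For the convolution block, writing $(f\ast\mu^{x,N})(x^{i,N}) = \frac{1}{N}\sum_j f(x^{i,N}-x^{j,N})$, the swap $i\leftrightarrow j$ in the resulting double sum combined with the oddness of $f$ gives the symmetrised identity
\begin{align*}
\sum_{i=1}^N \langle x^{i,N}-y^{i,N},\, (f\ast\mu^{x,N})(x^{i,N}) - (f\ast\mu^{y,N})(y^{i,N})\rangle
= \tfrac{1}{2N}\sum_{i,j=1}^N\langle \Delta^x_{ij}-\Delta^y_{ij},\, f(\Delta^x_{ij}) - f(\Delta^y_{ij})\rangle,
\end{align*}
with $\Delta^x_{ij} := x^{i,N}-x^{j,N}$, $\Delta^y_{ij} := y^{i,N}-y^{j,N}$. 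Jensen's inequality handles the $\fs$-contribution to $\hat\Sigma$ via $\sum_i |(\fs\ast\mu^{x,N})(x^{i,N}) - (\fs\ast\mu^{y,N})(y^{i,N})|^2 \leq \frac{1}{N}\sum_{i,j}|\fs(\Delta^x_{ij}) - \fs(\Delta^y_{ij})|^2$. Applying the first item of $(\mathbf{A}^f, \mathbf{A}^{\fs})$ pair-wise to $(\Delta^x_{ij},\Delta^y_{ij})$, and using $|\Delta^x_{ij}-\Delta^y_{ij}|^2 \leq 2|x^{i,N}-y^{i,N}|^2 + 2|x^{j,N}-y^{j,N}|^2$ to collapse the double sum, yields a bound $C_2|x^N-y^N|^2$ with $C_2$ independent of $N$.

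The main obstacle is \emph{coefficient harmonisation}: because $|\hs|^2$ does not split exactly into $|\sigma|^2$ and $|\fs\ast\mu|^2$, the coefficient $2(m-1)$ in the target inequality does not line up directly with the $2(m-1)$ appearing in the separate items of Assumption \ref{Ass:Monotone Assumption}, and one pays a multiplicative factor of two through the splitting of $|\hs|^2$. This factor is absorbed into the final right-hand side constant $L = L(C_1,C_2,m)$. The crucial points are that (i) the empirical-coupling bound for $W^{(2)}(\mu^{x,N},\mu^{y,N})^2$, (ii) the oddness-based symmetrisation of the double sum, and (iii) the elementary inequality $|\Delta^x_{ij}-\Delta^y_{ij}|^2 \leq 2|x^{i,N}-y^{i,N}|^2+2|x^{j,N}-y^{j,N}|^2$ together produce a final constant $L$ that is strictly independent of $N$, which is precisely what the lemma asserts.
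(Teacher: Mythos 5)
Your decomposition into an $(u,\sigma)$-block and a convolution $(f,\fs)$-block, the empirical-coupling bound for $W^{(2)}(\mu^{x,N},\mu^{y,N})^2$, the oddness-based symmetrisation of the double sum, and the use of $|\Delta^x_{ij}-\Delta^y_{ij}|^2\leq 2|x^{i,N}-y^{i,N}|^2+2|x^{j,N}-y^{j,N}|^2$ all mirror the paper's proof. However, the coefficient-harmonisation step at the end contains a genuine error: you write the target as
\begin{equation*}
\langle x^N-y^N,V(x^N)-V(y^N)\rangle + 2(m-1)\bigl|\hat\Sigma(t,x^N)-\hat\Sigma(t,y^N)\bigr|^2,
\end{equation*}
then split $|\hs|^2$ via $|a+b|^2\leq 2|a|^2+2|b|^2$, and claim that the resulting factor of two ``is absorbed into the final right-hand side constant''. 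That absorption is not possible here. After the split you face $\langle u-u\rangle + 4(m-1)|\sigma-\sigma|^2$, which exceeds what \eqref{eq:condition:driftoffsetdiffusion} controls by an extra $2(m-1)|\sigma-\sigma|^2$; under \eqref{eq:condition:drifdiffusiongrowth}, $|\sigma(t,x,\mu)-\sigma(t,y,\mu)|^2$ grows like $(1+|x|^{q_1}+|y|^{q_1})^2|x-y|^2$, which is not $\leq C|x-y|^2$ for $q_1>0$. Since the whole point of the paper's Assumption structure is that $\sigma$ is super-linear in space, the ``leftover'' term cannot be hidden in a constant.

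The paper's proof avoids this by proving the monotonicity estimate with the smaller prefactor $\frac{m-1}{2}$ in front of $|\hat\Sigma(t,x^N)-\hat\Sigma(t,y^N)|^2$. After the $|a+b|^2\leq 2|a|^2+2|b|^2$ split, this puts $(m-1)$ in front of both $\sum_i|\sigma-\sigma|^2$ and $\sum_i|\fs\ast\mu-\fs\ast\mu|^2$; the $\sigma$-block then fits inside \eqref{eq:condition:driftoffsetdiffusion} because $(m-1)\leq 2(m-1)$, and the convolution block, after Jensen and symmetrisation, has exactly the $\frac{1}{2N}\sum_{i,j}\langle\cdot,\cdot\rangle + \frac{m-1}{N}\sum_{i,j}|\fs(\cdot)-\fs(\cdot)|^2$ normalisation needed to invoke the first item of $(\mathbf{A}^f,\mathbf{A}^{\fs})$ pair-wise. (In fact $\frac{m-1}{2}$ is forced by the convolution block: the symmetrisation gives the $f$-term a $\frac{1}{2N}$ prefactor, and matching that against the $2(m-1)$ in the Assumption obliges $\lambda=\frac{m-1}{2}$ on $|\hat\Sigma|^2$.) Since $\frac{m-1}{2}>\frac12$ for $m>2$, this is still a bona fide monotonicity / one-sided Lipschitz condition and suffices for wellposedness of the particle SDE in $\bR^{Nd}$. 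Your argument is salvageable simply by replacing $2(m-1)$ with $\frac{m-1}{2}$ as the target coefficient and deleting the ``absorbed into the constant'' sentence, which is incorrect as stated.
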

\begin{proof}
From Assumption \ref{Ass:Monotone Assumption}, \eqref{eq: remark eq1}, \eqref{eq: remark eq3} in Remark \ref{remark:ImpliedProperties} and Jensen's inequality, we deduce, for all $x^{N}, y^{N}\in \bR^{Nd},~t\in[0,T]$,
\begin{align*}
    &\langle x^N-y^N,V(x^N)-V(y^N) \rangle 
    +  \frac{(m-1)}{2} | \hat{\Sigma}(t,x^N) -  \hat{\Sigma}(t,y^N)    |^2
    \\
    &
    \leq \frac{1}{2N}\sum_{i=1}^N\sum_{j=1}^N \big\langle (x^{i,N}-x^{j,N})-(y^{i,N}-y^{j,N}) 
    , f(x^{i,N}- x^{j,N})-f(y^{i,N}- y^{j,N})\big\rangle
    \\
    &~ + 
    \sum_{i=1}^N  \Big(\big\langle x^{i,N}-y^{i,N}, 
     u(x^{i,N}, \mu^{x,N})-   u(y^{i,N}, \mu^{y,N}) 
    \big\rangle\\
    &~ \qquad \qquad + 
     (m-1)| \sigma(t,x^{i,N}, \mu^{x,N})-\sigma(t,y^{i,N}, \mu^{y,N}) |^2
    \Big)
    \\
    &~ +\frac{m-1}{N} 
    \sum_{i=1}^N
    \sum_{j=1}^N |   \fs(x^{i,N}- x^{j,N})-\fs(y^{i,N}- y^{j,N}) |^2 
    \le C |x^N-x^N|^2,
\end{align*}
where $C>0$ is independent of $N$.

\end{proof}
\color{black}

\textbf{Propagation of chaos (PoC).} 
In order to show that the particle approximation \eqref{Eq:MV-SDE Propagation} is effective to approximate the underlying MV-SDE, we present a pathwise propagation of chaos result (convergence as the number of particles increases). 
To do so, we introduce the system of non interacting particles  
\begin{align}
	\label{Eq:Non interacting particles}
	\dd X_{t}^{i} = 
	\big( 
	v(X_{t}^{i}, \mu^{X^{i}}_{t})+ b(t, X_{t}^{i}, \mu^{X^{i}}_{t}) \big)\dd t + \hs(t,X_{t}^{i}, \mu^{X^{i}}_{t}) \dd W_{t}^{i} ,\quad t\in [0,T],
\end{align}
which are (decoupled) MV-SDEs with i.i.d. initial conditions  $X_{0}^{i}$ (an independent copy of $X_0$). Since the $X^{i}$'s are
independent, $\mu^{X^{i}}_{t}=\mu^{X}_{t}$ for all $
i$ (and $\mu^{X}_{t}$ the marginal law of the solution to \eqref{Eq:General MVSDE}). 
We are interested in the strong error-type metrics for the numerical approximation and the relevant PoC result for our case is given in the next theorem,  the proof is postponed to Section \ref{section: proof of the main results}.  

\begin{theorem}[Propagation of Chaos]
\label{theorem:Propagation of Chaos}
	Let Assumption \ref{Ass:Monotone Assumption} hold for some $m > 2(q+1)$.  
Then, there exists a unique solution $X^{i,N}$ to \eqref{Eq:MV-SDE Propagation} and for any $1\leq p\leq m$ there exists $C>0$ independent of $N$ such that 
\begin{align*} 
    \sup_{ i\in \llbracket 1,N \rrbracket} \sup_{t\in[0,T]} \bE\big[  |X^{i,N}_t|^p \big] \leq C(1+\bE\big[\,|X_0|^p \big]).
\end{align*}

Moreover, suppose that $m > 2(q+1)$ and $m>4$, then we have the following convergence result 
	\begin{align}
	\label{eq:propogation of chaos, poc result} 
	\sup_{ i\in \llbracket 1,N \rrbracket}	\sup _{t \in[0, T]} \mathbb{E}\big[|X_{t}^{i, N}-X_{t}^{i}|^{2}\big] \leq C
	\begin{cases}N^{-1 / 2}, & d<4,
	\\ N^{-1 / 2} \log N, & d=4, 
	\\ N^{-\frac{2}{d+4}}, & d>4,\end{cases}
	\end{align}
{\color{black}where $X^{i}$ is the solution to \eqref{Eq:Non interacting particles} with driving Brownian motion $W^{i}$ (the same as for the $i$-th particle) in the sense of Theorem \ref{Thm:MV Monotone Existence}.}
\end{theorem}
This result shows that the particle approximation will converge to the MV-SDE with a given rate. Therefore, to establish convergence of our numerical scheme to the MV-SDE (in a strong sense), we only need to show that the discrete-time version of the particle system converges to the ``true'' particle system.


\subsection{Ergodicity of the MV-SDE}

Next, recall the constants $q,m$ from Assumption \ref{Ass:Monotone Assumption}, we consider the long-time behaviour, an exponential ergodic property and the existence of an invariant measure for the MV-SDEs of interest.
We point the reader to \cite{Wang2018DDSDE-LandauType,wang2021distribution1,wang2021distribution2} for a review on recent results. 
To this end, we need to estimate differences of \eqref{Eq:General MVSDE} with different initial conditions and introduce the associated nonlinear semigroup. 

Define the nonlinear semigroup $(P^*_{s,t})$ for $0\leq s \le t <\infty $ on $\cP_{\ell}(\bR^d),~\ell>2q+2$ by setting $P^*_{s,t}\mu := \textrm{Law}(X_{s,t})$ and $X_{s,\cdot}$ is the solution to \eqref{Eq:General MVSDE} starting from time $s$ such that $\textrm{Law}(X_{s,s})=\mu$. Note that standard literature sets the semigroup in $\cP_{2}(\mathbb{R}^d)$ but in this manuscript Theorem \ref{Thm:MV Monotone Existence} requires higher integrability of the initial condition and working with $\cP_{\ell}(\mathbb{R}^d),\ell>2q+2$ reflects that. 
In the notation introduced earlier, we have $P^*_{0,t} \mu_0^X:= \mu_t^X$, and more generally $P^*_{s,t}=P^*_{s,r}P^*_{r,t}$ for $s\le r \le t$. Crucially, if $b$ and $\sigma$ in \eqref{Eq:General MVSDE} are independent of time, then $ P^*_{s,t}=P^*_{0,t-s}$ (see \cite{Wang2018DDSDE-LandauType}). 

We say that $\bar \mu$ is an invariant distribution of the semigroup $P^*$ if $P^*_{0,t}\bar \mu=\bar \mu$ holds for all $t\geq 0$. The semigroup satisfies an ergodic property if there exists $\widehat \mu \in \cP_{\ell}(\mathbb{R}^d)$ such that $\lim_{t\to \infty} P^*_{0,t} \nu=\widehat \mu$ (weakly) for all $\nu$ (at this point, we leave unclear the space where $\nu$ belongs to). In the proof of the theorem below, we show that the property holds true for any $\nu\in \cP_{2\ell-2}(\mathbb{R}^d) \subset \cP_{\ell}(\mathbb{R}^d),~\ell>2q+2$ with convergence taking place through the $W^{(\ell)}$-metric in $\cP_{2\ell-2}(\mathbb{R}^d)  $. These results differ from those in \cite{Wang2018DDSDE-LandauType} and the proof requires further care. 

\begin{theorem}[Contraction, exponential ergodicity property and invariance]
\label{Thm:ergodicity}
Let Assumption \ref{Ass:Monotone Assumption} hold with $m > 4q+2$. Assume that there exist constants    $ L^{(1)}_{(bu\sigma)},L^{(3)}_{(bu\sigma)}\ge 0$, and $L^{(2)}_{(bu\sigma)}\in\bR$  such that for all $t\in [0,\infty)$, $x \in \bR^d$ and $\mu \in \cP_2(\bR^d)$ we have that  
\begin{align*} 
\big \langle x, u(x,\mu) + b(t,x,\mu) \big\rangle +  (m-1) |\sigma(t,x, \mu)|^2
& 
\leq L^{(1)}_{(bu\sigma)} + L^{(2)}_{(bu\sigma)}|x|^{2}+L^{(3)}_{(bu\sigma)}\big(W^{(2)}(\mu, \delta_0) \big)^2 .
\end{align*} 
Then the following three assertions hold:
\begin{enumerate}
    
    \item 
    Let $\mu \in \cP_\ell(\bR^d)$ with $2q+2 < \ell \leq m $,  
      $\rho_{1,\ell}= \ell( L^{(2)}_{(bu\sigma)}+ L^{(3)}_{(bu\sigma)}+2L^{(1),+}_{(f)}+L^{(3)}_{(f)}/2)+(\ell-2)/\ell$,  $t\in [0,T],~T<\infty$. \color{black} 
    Then for some constant $C$ depending on $\ell,~L^{(1)}_{(bu\sigma)}$  
    and $ \sup_t|b(t,0,\delta_0)|$, but independent of time $t$, we have  
    \begin{align}
    \label{eq:Bounded Orbit}
        \big(W^{(\ell)}(P^*_{0,t} \mu,\delta_0)\big)^\ell 
    \le   
    e^{\rho_{1,\ell} t}
    \big(W^{(\ell)}( \mu,\delta_0)\big)^\ell  
    +\frac{C}{\rho_{1,\ell}}(e^{\rho_{1,\ell} t}-1) \1_{ \rho_{1,\ell} \neq 0 }+ Ct  \1_{ \rho_{1,\ell} = 0 }
    . 
    \end{align}
    
    \item For $\mu,\nu\in \cP_{\ell}(\bR^d)$, with $2q+2 < \ell   $,   $L^{(1),+}_{(f)}=\max\{L^{(1)}_{(f)},0\} $ and $\rho_2= 2L_{(b)}^{(2)}+ 4L_{(b)}^{(3)}+2 L^{(1)}_{(u\sigma)} +4 L^{(2)}_{(u\sigma)} + 4L^{(1),+}_{(f)}$,
    $t\in [0,T],~T<\infty$. Assume $\rho_{1,\ell}<0$,
    we have
    \begin{align}
    \label{eq:aux.SemigroupDifferenceEstimate}
    \big(W^{(2)}(P^*_{0,t} \mu,P^*_{0,t}\nu)\big)^2 
    \le    
   3e^{\rho_2 t} \big(W^{(2)}(\mu,\nu)\big)^2 .   
    \end{align}

    \item  
    Assume further that the functions $b,\sigma$ are independent of time and that $\rho_2  , \rho_{1,2\ell-2}< 0$ with $1+m/2\geq\ell>2q+2$. Then \eqref{eq:aux.SemigroupDifferenceEstimate} yields exponential contraction, \eqref{eq:Bounded Orbit} yields bounded orbits, and there exists a unique invariant measure $\bar \mu \in \cP_{\ell}(\bR^d)$,  such that, for any $t>0$ and $\nu_0\in \cP_{2\ell -2}(\bR^d)$ we have 
    \begin{align*}
       W^{(2)}(P^*_{0,t} \bar \mu ,\bar \mu)
       =
       0
       \qquad \textrm{and}\qquad 
        W^{(2)}(P^*_{0,t} \nu_0 ,\bar \mu)
        \leq 
        e^{ \rho_2 t/2}  W^{(2)}\big(\nu_0, \bar \mu \big)
       .
    \end{align*}
In fact, for any $\nu_0\in \cP_{2\ell -2}(\bR^d)$ we have $\lim_{t\to \infty} W^{(\ell)}(P_{0,t}^* \nu_0,\bar \mu )=0$.
\end{enumerate}

\end{theorem}

The proof is postponed to Section \ref{section: proof of ergodicity}. A quick inspection shows that statement 1 and 2 only need $\ell> 2q+2$.  
Strictly speaking, the requirement for the initial distribution $\nu_0\in \cP_{2\ell -2}(\bR^d)$ is only needed for the final statement. The mechanism of choice for the proof is inspired by \cite[Theorem 3.1]{Wang2018DDSDE-LandauType}. In essence, \eqref{eq:Bounded Orbit} can be interpreted as the existence of a `non-expanding orbit', i.e., there is a `bounded orbit' and the exponential contractivity of the Wasserstein metric (under $\rho_2,\rho_{1,2\ell-2}<0$) in \eqref{Eq:MV-SDE Propagation} yields that all orbits are bounded -- for further considerations see \cite{liu2022ergodicity}. 



\subsection[C-stability and B-consistency for the particle system]{$C$-stability and $B$-consistency for the particle system}
\label{sec:C-B}
Before introducing our numerical scheme and the corresponding strong convergence result, we first present a definition of $C$-stability and $B$-consistency for the particle system. The following definitions and methodologies are modifications of the original work in \cite{2015ssmBandC} tailored to the present particle system setting. The probability space in this section supports (at least) the $N$ driving Brownian motions of the particle system and the filtration corresponds to the enlarged filtration generated by all Brownian motions augmented by a rich enough $\sigma$-algebra $\mathcal{F}_0$. 
\begin{definition}
\label{def:bc:def1:scheme}
Let $h \in(0, T]$ be  the stepsize and $\Psi_i: \mathbb{R}^{d} \times \cP_2(\bR^d) \times [0,T] \times \Omega \rightarrow \mathbb{R}^{d}$ 
for all $i\in \llbracket 1,N \rrbracket$ be a mapping satisfying the following measurability and integrability condition: For every $t,t+h \in [0,T],~h\in(0,1)$ and $X^N=(X^1,\ldots,X^N) \in L^{2}\left(\Omega, \mathcal{F}_{t}, \bP ; \mathbb{R}^{Nd}\right)$, $\mu \in \cP_2(\bR^{d})$
it holds
\begin{align}
\label{eq: def psi is L2}
\Psi_i(X^i,\mu, t, h) \in L^{2}\big(\Omega, \mathcal{F}_{t+h}, \bP  ; \mathbb{R}^{d}\big),\quad 
\Psi=(\Psi_1,\ldots,\Psi_N).
\end{align}
Then, for $M \in \mathbb{N}, M h = T,~k\in \llbracket 0,M-1 \rrbracket $, $t_k=kh$, 
\color{black}
we say that a particle system $\hx_{t_k}^{N}=(\hx_{t_k}^{1,N},\ldots,\hx_{t_k}^{N,N})\in \mathbb{R}^{Nd}  $ 
  is generated by the stochastic one-step method $(\Psi, h, \xi)$ with initial condition $\xi=(\xi^1,\ldots,\xi^N) \in L^{2}\left(\Omega, \mathcal{F}_{0}, \bP  ; \mathbb{R}^{Nd}\right)$, $\Psi=(\Psi_1,\ldots,\Psi_N) $, if 
\begin{align*}
&\hx_{k+1}^{i,N} =\Psi_i (\hx_{k}^{i,N}, \hm_{k}^{X,N}, t_{k}, h  ), 
\quad
\hm_k^{X,N} (\dd x)=\frac{1}{N} \sum_{j=1}^N \delta_{\hx_{k}^{j,N}}(\dd x),
\\
&\hx_{0}^{i,N}=\xi^i,\quad i\in \llbracket 1,N \rrbracket, 
\end{align*}
where $ \hx_{k}:=\hx_{t_k}$ and $\hm_{t_k}^{X,N}:=\hm_{k}^{X,N}$. 
\color{black} 
We call $\Psi$ the one-step map of the method.  
\end{definition}

\begin{definition}
\label{def:bc:def2:C-stable}
A stochastic one-step method $(\Psi, h, \xi)$ is called stochastically $C$-stable if there exists a constant $C>0$ and a parameter $\eta \in(1, \infty)$  
such that for all $t,t+h\in [0,T],~h>0$ and 
\color{black}
all random variables $X_t^{i,N}, Z_t^{i,N} \in L^{2}\left(\Omega, \mathcal{F}_{t}, \bP  ; \mathbb{R}^{d}\right),~ i\in \llbracket 1,N \rrbracket $ 
 -- the components of \textit{identically distributed} particle systems $X^N_t, Z^N_t\in\bR^{dN}$ (i.e., each particle system is exchangeable) with their empirical measures $\mu_t^{X,N},~\mu_t^{Z,N}\in \cP_2(\bR^d)$ -- satisfying that the pairs $( X_t^{i,N}, Z_t^{i,N})_i$ are identically distributed over $i$, it holds
\color{black}  
\begin{align*}
&\bE \Big[    \left|\mathbb{E}\big[\Psi_{i}(X_t^{i,N},\mu_t^{X,N}, t, h)
-\Psi_{i}(Z_t^{i,N},\mu_t^{Z,N}, t, h) \mid \mathcal{F}_{t}\big]\right|^2 \Big]
\\
& \qquad +
 \eta \bE \Big[   
\left|\left(\mathrm{id}-\mathbb{E}\big[\cdot \mid \mathcal{F}_{t}\big]\right)
\big(\Psi_{i}(X_t^{i,N},\mu_t^{X,N}, t, h)-\Psi_{i}(Z_t^{i,N},\mu_t^{Z,N}, t, h)\big)\right|^{2}  \Big]
\\
&
\qquad \qquad
\qquad \qquad
\leq
\left(1+C  h\right)
\bE \big[    |X_t^{i,N}-Z_t^{i,N}|^{2}\big]
+
Ch \big( W^{(2)}(\mu_t^{X,N},\mu_t^{Z,N} )\big)^2 .
\end{align*} 
\end{definition}
Here, and in what follows we denote by $\left(\mathrm{id}-\mathbb{E}\left[\cdot \mid \mathcal{F}_{t}\right]\right) Y=Y-\mathbb{E}\left[Y \mid \mathcal{F}_{t}\right]$ the projection of an $\mathcal{F}_{t+h}$-measurable random variables $Y$ orthogonal to the conditional expectation $\mathbb{E}\left[\cdot \mid \mathcal{F}_{t}\right]$.
\begin{definition}
\label{def:bc:def3:B-consist}
Let $X^{i,N},~i\in \llbracket 1,N \rrbracket$, be the unique strong solution to \eqref{Eq:MV-SDE Propagation},
with $\mu^{X,N}$ being the corresponding empirical measure. A stochastic one-step method $(\Psi, h, \xi)$ is called stochastically $B$-consistent of order $\gamma>0$ if there exists a constant $C>0$ such that  for all $t,t+h\in [0,T],~h\in(0,1)$, it holds
\begin{align*}{
\bE \Big[    \left|\mathbb{E}\big[X_{t+h}^{i,N}-\Psi_i(X_t^{i,N},\mu^{X,N}_{t}, t, h) \mid \mathcal{F}_{t}\big]\right|^2\Big]}
&\leq C  h^{2\gamma+2},
\\{
\bE \Big[    
\left|\left(\mathrm{id}-\mathbb{E}\big[\cdot \mid \mathcal{F}_{t}\big]\right)\big(X_{t+h}^{i,N}-\Psi_i(X_t^{i,N},\mu^{X,N}_{t}, t, h)\big)\right|^2 \Big]}
&\leq C  h^{2\gamma+1}.
\end{align*} 

\end{definition}

Next, we show the convergence results based on the definitions above.
\begin{lemma}
\label{lemma:bc: diff leq sum} 
Let $(\Psi, h, \xi)$ be a stochastically $C$-stable one-step method with some $\eta \in(1, \infty)$.  For the particle system with components $X^{i,N}$, given by \eqref{Eq:MV-SDE Propagation} with its empirical distribution $\mu^{X,N}$,  we have
\begin{align*}
& \sup_{n\in \llbracket 0,M \rrbracket}\sup_{i\in \llbracket 1,N \rrbracket}  \bE \big[   
 |X^{i,N}_n -\hx^{i,N}_n  |^{2} \big] 
 \leq
 e^{CT}\bigg[  
 \color{black}
 \sup_{i\in \llbracket 1,N \rrbracket}
\bE \big[   |X^{i,N}_0-\xi^{i}| ^{2} \big]
\color{black}
\\
&\quad+\sum_{k=1}^{M}\sup_{i\in \llbracket 1,N \rrbracket}  \bigg(
 (1+h^{-1} )
\bE \Big[    
\big|\mathbb{E}\big[X^{i,N}_k-\Psi_i (X^{i,N}_{k-1},\mu^{X,N}_{k-1}, t_{k-1}, h  ) \mid \mathcal{F}_{t_{k-1}}\big]\big|^{2} \Big]
\\
&\quad\qquad\qquad+C_{\eta} ~
\bE \Big[   
\Big|\left(\mathrm{id}-\mathbb{E}\big[\cdot \mid \mathcal{F}_{t_{k-1}}\big]\right)
\big(X^{i,N}_k-\Psi_i (X^{i,N}_{k-1},\mu^{X,N}_{k-1}, t_{k-1}, h  ) \big)\Big|^{2}  
\Big]\bigg) \bigg], 
\end{align*}
where $C_{\eta}=1+(\eta-1)^{-1}$ and $\hx^{i,N}_n $ denotes the particles generated by $(\Psi, h, \xi)$, with  $ X^{i,N}_k=X^{i,N}_{t_k}$, $\mu^{X,N}_k=\mu^{X,N}_{t_k}$, $t_k=kh$ for all $k \in \llbracket 0,M \rrbracket$.
\end{lemma}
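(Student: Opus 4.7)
Denote the per-step error by $e_{k}^{i,N} := X_{k}^{i,N} - \hat{X}_{k}^{i,N}$ and split each one-step increment into a local truncation term and a propagated-error term:
\begin{align*}
e_{k+1}^{i,N}
= \underbrace{\big(X_{k+1}^{i,N} - \Psi_{i}(X_{k}^{i,N},\mu_{k}^{X,N},t_{k},h)\big)}_{=:\tau_{k}^{i,N}}
+ \underbrace{\big(\Psi_{i}(X_{k}^{i,N},\mu_{k}^{X,N},t_{k},h) - \Psi_{i}(\hat{X}_{k}^{i,N},\hat{\mu}_{k}^{X,N},t_{k},h)\big)}_{=:\pi_{k}^{i,N}}.
\end{align*}
The plan is the standard $C$-stability/$B$-consistency splitting adapted to the particle setting: control $\pi_{k}^{i,N}$ by stability, control $\tau_{k}^{i,N}$ by the consistency bounds on the right-hand side, then run a discrete Gronwall argument.

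First, I would use the orthogonality in $L^{2}$ of $\mathbb{E}[\cdot\mid\mathcal{F}_{t_{k}}]$ and $\mathrm{id}-\mathbb{E}[\cdot\mid\mathcal{F}_{t_{k}}]$ to write $\mathbb{E}[|e_{k+1}^{i,N}|^{2}] = \mathbb{E}[|\mathbb{E}[e_{k+1}^{i,N}\mid\mathcal{F}_{t_{k}}]|^{2}] + \mathbb{E}[|(\mathrm{id}-\mathbb{E}[\cdot\mid\mathcal{F}_{t_{k}}])e_{k+1}^{i,N}|^{2}]$. Applying Young's inequality with parameter $h$ on the conditional expectation piece, and with parameter $\eta-1$ on the orthogonal piece, gives
\begin{align*}
\mathbb{E}[|e_{k+1}^{i,N}|^{2}]
&\le (1+h)\,\mathbb{E}\big[|\mathbb{E}[\pi_{k}^{i,N}\mid\mathcal{F}_{t_{k}}]|^{2}\big]
+ \eta\,\mathbb{E}\big[|(\mathrm{id}-\mathbb{E}[\cdot\mid\mathcal{F}_{t_{k}}])\pi_{k}^{i,N}|^{2}\big] \\
&\quad + (1+h^{-1})\,\mathbb{E}\big[|\mathbb{E}[\tau_{k}^{i,N}\mid\mathcal{F}_{t_{k}}]|^{2}\big]
+ C_{\eta}\,\mathbb{E}\big[|(\mathrm{id}-\mathbb{E}[\cdot\mid\mathcal{F}_{t_{k}}])\tau_{k}^{i,N}|^{2}\big],
\end{align*}
where the factor $(1+h)\le(1+h)$ in front of the stability piece can be absorbed since $(1+h)(\text{$C$-stability bound})$ contributes only an additional $Ch$.

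Next, I apply the $C$-stability inequality from Definition \ref{def:bc:def2:C-stable} to the first two terms (legitimate since the systems $X_{\cdot}^{\cdot,N}$ and $\hat{X}_{\cdot}^{\cdot,N}$ are exchangeable by symmetry of dynamics and i.i.d.\ initial data). This yields
\begin{align*}
(1+h)\,\mathbb{E}[|\mathbb{E}[\pi_{k}^{i,N}\mid\mathcal{F}_{t_{k}}]|^{2}] + \eta\,\mathbb{E}[|(\mathrm{id}-\mathbb{E}[\cdot\mid\mathcal{F}_{t_{k}}])\pi_{k}^{i,N}|^{2}]
\le (1+Ch)\,\mathbb{E}[|e_{k}^{i,N}|^{2}] + Ch\,\mathbb{E}\big[(W^{(2)}(\mu_{k}^{X,N},\hat{\mu}_{k}^{X,N}))^{2}\big].
\end{align*}
The Wasserstein term is handled by the synchronous coupling bound $(W^{(2)}(\mu_{k}^{X,N},\hat{\mu}_{k}^{X,N}))^{2}\le N^{-1}\sum_{j}|e_{k}^{j,N}|^{2}$. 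Setting $E_{k}:=\sup_{i}\mathbb{E}[|e_{k}^{i,N}|^{2}]$ and taking $\sup_{i}$ (using exchangeability so that $N^{-1}\sum_{j}\mathbb{E}[|e_{k}^{j,N}|^{2}]\le E_{k}$) delivers the recursion
\begin{align*}
E_{k+1} \le (1+Ch)E_{k} + (1+h^{-1})\sup_{i}\mathbb{E}[|\mathbb{E}[\tau_{k}^{i,N}\mid\mathcal{F}_{t_{k}}]|^{2}] + C_{\eta}\sup_{i}\mathbb{E}[|(\mathrm{id}-\mathbb{E}[\cdot\mid\mathcal{F}_{t_{k}}])\tau_{k}^{i,N}|^{2}].
\end{align*}
Iterating (discrete Gronwall) with $(1+Ch)^{M}\le e^{CT}$ and re-indexing the sum from $k=1$ to $M$ yields exactly the stated bound.

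The main routine checks are (i) that $X^{\cdot,N}$ and $\hat{X}^{\cdot,N}$ qualify as \emph{identically distributed particle systems} for the $C$-stability estimate, which follows from exchangeability in $i$ induced by the symmetric dynamics and i.i.d.\ initial data, and (ii) the synchronous-coupling bound for the empirical Wasserstein distance. The genuine point to be careful about is the simultaneous Young-inequality calibration: the conditional-expectation part needs a $h$-dependent split to generate the $h^{-1}$ blow-up matching $B$-consistency order $h^{2\gamma+2}$, while the orthogonal part needs an $h$-independent split with ratio $\eta$ in order to produce the constant $C_{\eta}=1+(\eta-1)^{-1}$ exactly as stated. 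Everything else is bookkeeping.
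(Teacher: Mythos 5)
Your proposal is correct and follows essentially the same strategy as the paper's proof: the identical error decomposition $e_{k+1}^{i,N}=\tau_k^{i,N}+\pi_k^{i,N}$ by inserting $\Psi_i(X_k^{i,N},\mu_k^{X,N},t_k,h)$, the same $L^2$-orthogonal split via $\mathbb{E}[\cdot\mid\mathcal F_{t_k}]$, the same Young calibrations ($h$-dependent on the conditional-expectation channel, $\eta$-dependent on the orthogonal channel yielding $C_\eta=1+(\eta-1)^{-1}$), the same use of $C$-stability and the synchronous-coupling estimate $(W^{(2)}(\mu_k^{X,N},\hat\mu_k^{X,N}))^2\le N^{-1}\sum_j|e_k^{j,N}|^2$ combined with exchangeability, and the same discrete Gronwall conclusion.
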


\begin{theorem}
\label{theorem:bc: convergence rate }
Let the stochastic one-step method $(\Psi, h, \xi)$ be stochastically $C$-stable and stochastically $B$-consistent of order $\gamma>0$. If $\xi^{i}=X_{0}^{i,N}=\hx_{0}^{i,N}$, then there exists a constant $C$  independent of $N,h$ such that 
\begin{align*}    	
\sup_{n\in \llbracket 0,M \rrbracket}    \sup_{i\in \llbracket 1,N \rrbracket}   
\bE \big[   
 | X_{n}^{i,N}-\hx_{n}^{i,N} |^2 \big]  
\leq Ch^{2\gamma},
\end{align*}
where  $X^{i,N}$ denotes the exact solution to \eqref{Eq:MV-SDE Propagation} and $\hx^{i,N}$ is the particle generated by $(\Psi, h, \xi)$.
In particular, $(\Psi, h, \xi)$ is strongly convergent of order $\gamma$.

\end{theorem}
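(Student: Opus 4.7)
The plan is to combine the two preceding ingredients directly: apply Lemma \ref{lemma:bc: diff leq sum} (which is the $C$-stability a priori bound) and then insert the $B$-consistency estimates of Definition \ref{def:bc:def3:B-consist} to control each summand. There is essentially no geometry left to do; the work has been front-loaded into setting up $C$-stability and $B$-consistency for the particle system.

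First I would observe that by the hypothesis $\xi^i = X_0^{i,N} = \hx_0^{i,N}$, the initial-error contribution $\bE[|X_0^{i,N} - \xi^i|^2]$ on the right-hand side of Lemma \ref{lemma:bc: diff leq sum} vanishes. What remains is a telescoping-type sum over $k\in\llbracket 1, M\rrbracket$ of the two local error quantities, one involving the conditional expectation of the one-step residual $X_k^{i,N} - \Psi_i(X_{k-1}^{i,N}, \mu_{k-1}^{X,N}, t_{k-1}, h)$ and the other involving its orthogonal projection $\bigl(\mathrm{id} - \bE[\,\cdot \mid \mathcal{F}_{t_{k-1}}]\bigr)$. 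By $B$-consistency of order $\gamma$, the first is bounded (uniformly in $i$) by $Ch^{2\gamma+2}$ and the second by $Ch^{2\gamma+1}$.

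Next I would plug these bounds into the estimate from Lemma \ref{lemma:bc: diff leq sum}. The prefactor $(1+h^{-1})$ on the first local term turns $Ch^{2\gamma+2}$ into at most $2Ch^{2\gamma+1}$ for $h\in(0,1]$; the second local term is already of order $h^{2\gamma+1}$ up to the constant $C_\eta$. Hence each summand is $O(h^{2\gamma+1})$ uniformly in $k$ and $i$. Summing the $M = T/h$ terms then yields
\begin{align*}
\sup_{n\in\llbracket 0,M\rrbracket}\sup_{i\in\llbracket 1,N\rrbracket}\bE\bigl[|X_n^{i,N} - \hx_n^{i,N}|^2\bigr]
\;\leq\; e^{CT}\cdot M \cdot \widetilde{C}\,h^{2\gamma+1}
\;=\; e^{CT}\,\widetilde{C}\,T\,h^{2\gamma},
\end{align*}
which is precisely the claimed $Ch^{2\gamma}$ bound, with a constant independent of $N$ and $h$ (the independence of $N$ is inherited from the corresponding $N$-independence in the $C$- and $B$-consistency hypotheses applied uniformly in $i\in\llbracket 1,N\rrbracket$).

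The only subtle point, which is really just bookkeeping, is to make sure the Wasserstein contribution $C h (W^{(2)}(\mu_t^{X,N},\mu_t^{Z,N}))^2$ appearing in $C$-stability (and which Lemma \ref{lemma:bc: diff leq sum} has already absorbed via a discrete Gr\"onwall argument through the exchangeability/identical-distribution property together with $(W^{(2)}(\mu^{X,N},\mu^{Z,N}))^2 \le \tfrac{1}{N}\sum_j |X^{j,N}-Z^{j,N}|^2$) does not spoil the $N$-uniformity. Since Lemma \ref{lemma:bc: diff leq sum} is taken as given and already produces a bound independent of $N$, nothing further is needed: the proof is a one-line substitution of the $B$-consistency estimates into that lemma, followed by the identity $Mh=T$. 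I expect no obstacle beyond verifying that the constant produced is indeed independent of $N$, which is automatic from the statement of Lemma \ref{lemma:bc: diff leq sum}.
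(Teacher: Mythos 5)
Your proposal is correct and follows exactly the route the paper takes: invoke Lemma \ref{lemma:bc: diff leq sum}, kill the initial-error term using $\xi^i=X_0^{i,N}$, substitute the $B$-consistency bounds $Ch^{2\gamma+2}$ and $Ch^{2\gamma+1}$, absorb the $(1+h^{-1})$ prefactor, and sum the $M=T/h$ terms to obtain $Ch^{2\gamma}$. The paper's proof is this same one-line substitution, with $N$-uniformity inherited from Lemma \ref{lemma:bc: diff leq sum} as you note.
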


\subsection{The numerical scheme}
The split-step method (SSM) proposed here follows the steps of \cite{chen2022SuperMeasure} and is re-cast accordingly. The critical difficulty arises from the simultaneous appearance of the convolution component in $v$  \eqref{Eq:General MVSDE} and the super-linear diffusion coefficient. The presence of both nonlinearities is the main hindrance to proving moment bounds of order $p >2$ for the numerical scheme. Therefore, we rely on the $C$-stability and $B$-consistency methodology, as this approach does not require proving moment stability of higher order for the numerical scheme. This is in stark contrast to the techniques used in \cite{chen2022SuperMeasure}, where the time-stepping scheme has stable moments of higher order (depending on the regularity of the initial data) and strong convergence rates are proven without employing the $C$-stability and $B$-consistency procedure. Here, we wish to emphasize that even with the symmetry condition it is unclear how to prove $L^p$-moment bounds of the numerical scheme for $p >2$.

\begin{definition}[Definition of the SSM]
\label{def:definition of the ssm}
Let Assumption \ref{Ass:Monotone Assumption} hold, let $h$ satisfy \eqref{eq:h choice} and let
 $M\in \bN$   such that $Mh=T$.
Define recursively the SSM approximating of \eqref{Eq:MV-SDE Propagation} as: set $\hx_{0}^{i,N}=X^i_0$, for $i\in \llbracket 1,N\rrbracket $; for  $n\in \llbracket 0,M-1\rrbracket$ and $i\in \llbracket 1,N\rrbracket $ (recall Lemma \ref{remark:OSL for the whole function / system V}), $t_n=nh$, we have with $\Delta W_{n}^i=W_{t_{n+1}}^i-W_{t_n}^i$, {\color{black}  and $V$ defined in Lemma \ref{remark:OSL for the whole function / system V},}
\begin{align}
& Y_{n}^{\star,N} =\hat{ X}_{n}^{N}+h V (Y_{n}^{\star,N} ),
\quad  \hat{ X}_{n}^{N}=(\ldots,\hx_{n}^{i,N},\ldots),\quad Y_{n}^{\star,N}=(\ldots,Y_{n}^{i,\star,N},\ldots),
\label{eq:SSTM:scheme 0}
\\
\label{eq:SSTM:scheme 1}
&\textrm{where }~Y_{n}^{i,\star,N} =\hx_{n}^{i,N}+h v (Y_{n}^{i,\star,N},\hm^{Y,N}_n ),  
 \quad 
 \quad 
  \hm^{Y,N}_n(\dd x):= \frac1N \sum_{j=1}^N \delta_{Y_{n}^{j,\star,N}}(\dd x),
 \\
\label{eq:SSTM:scheme 2}
& \hx_{n+1}^{i,N} =Y_{n}^{i,\star,N}
            + b(t_n,Y_{n}^{i,\star,N},\hm^{Y,N}_n) h
            +\hs(t_n,Y_{n}^{i,\star,N},\hm^{Y,N}_n) \Delta W_{n}^i.
\end{align}
The stepsize $h$ satisfies (this constraint is soft, see \cite[Remark 2.7]{chen2022SuperMeasure} for details)
\begin{align}
\label{eq:h choice}
h\in \Big(0, \min\big\{1,\tfrac 1\zeta\big\} \Big)
~  \textrm{where}~
\zeta= \max\Big\{ 2(L^{(1)}_{(f)}+L^{(1)}_{(u\sigma)}),~2(2L^{(1),+}_{(f)}+L^{(1)}_{(u\sigma)}+ L^{(2)}_{(u\sigma)}),~0 \Big\}.
\end{align}
\end{definition}
It is immediate to see that \eqref{eq:SSTM:scheme 0} or \eqref{eq:SSTM:scheme 1} are implicit equations (given $\hx_{n}^{N}$). The solvability of $Y_{n}^{\star,N}$ as a unique implicit map of the input $\hx_{n}^{N}$ is addressed in Remark \ref{rem:SolvabilityImplicitEquation} below.  The choice of $h$ is discussed next. 
\begin{remark}[Choice of $h$]
\label{remark: choice of h}
Let Assumption \ref{Ass:Monotone Assumption} hold (the constraint on $h$ in \eqref{eq:h choice} comes from \eqref{eq:c-stable:y-y: u}, 
\eqref{eq:se1:y-y}, \eqref{eq:prop:yi-yj leq xi-xj}  and \eqref{eq:sum y square leq sum x square} below) and following the notation of these inequalities, under \eqref{eq:h choice} with $\zeta>0$, there exists $\lambda \in(0,1)$  such that $h< {\lambda}/{\zeta}$ and 
\begin{align*}
   &\max \bigg\{ \frac{1}{1-2(L^{(1)}_{(f)}+L^{(1)}_{(u\sigma)})h},~\frac{1}{1-2( 2L^{(1),+}_{(f)}+  L^{(1)}_{(u\sigma)}+L^{(2)}_{(u\sigma)})h} 
   \bigg\}< \frac{1}{1-\lambda}.
\end{align*}
For $\zeta=0$, the result is trivial and we conclude that there exists a constant $C$ independent of $h$ such that
\begin{align*}
  \max \bigg\{ \frac{1}{1-2(L^{(1)}_{(f)}+L^{(1)}_{(u\sigma)})h},~\frac{1}{1-2(2  L^{(1),+}_{(f)}+  L^{(1)}_{(u\sigma)}+ L^{(2)}_{(u\sigma)} )h} 
  \bigg\} \le 1+Ch.
\end{align*}
As argued in \cite[Remark 2.7]{chen2022SuperMeasure},  the constraint on $h$ \textit{can be lifted}. 
\end{remark}

\begin{remark}[Solvability of the implicit equation \eqref{eq:SSTM:scheme 1}]
\label{rem:SolvabilityImplicitEquation}
Recall that the function $V$ (defined in Lemma \ref{remark:OSL for the whole function / system V}) satisfies a one-sided Lipschitz condition in $\bR^{Nd}$, and hence (under \eqref{eq:h choice}) a unique solution $Y_{n}^{\star,N}$ to \eqref{eq:SSTM:scheme 0} as a function of $\hat{ X}_{n}^{N}$ exists. 
\color{black}
This result follows from a well-known argument using results on strongly monotone operators \cite[Theorem 26.A (p.557)]{Zeidler1990B} and is the same case in   \cite[Lemma 4.2]{chen2022SuperMeasure} (where we do not have a measure component for $V$ but view it as a mapping from $\bR^{Nd} \to \bR^{Nd}$ instead), the detailed argument (for mappings from  $\bR^{d} \to \bR^{d}$) is shown in \cite[Lemma 4.1]{2021SSM}.
\color{black}
\end{remark}

After introducing the discrete scheme, we discuss its continuous extension and the main convergence results.  
\begin{definition} [Continuous extension of the SSM]
Under the same choice of $h$ and assumptions in Definition \ref{def:definition of the ssm}, for all $t\in[t_n,\tnp]$, $n\in \llbracket 0,M-1\rrbracket$, $t_n=nh$,   $i\in\llbracket 1,N \rrbracket$,  
$\hx_{0}^{i,N}=X^i_0$, for $X^i_0$  in \eqref{Eq:MV-SDE Propagation}, the continuous extension of the SSM is 
\begin{align*} 
    \dd \hat{X}_{t}^{i,N}
    &
    =
    \big( v(Y_{\kappa(t)}^{i,\star,N},\hm^{Y,N}_{\kappa(t)})
    +b(\kappa(t),Y_{\kappa(t)}^{i,\star,N},\hm^{Y,N}_{\kappa(t)})  \big) \dd t 
    + \hs (\kappa(t),Y_{\kappa(t)}^{i,\star,N},\hm^{Y,N}_{\kappa(t)} ) \dd W_t^i,
    \\
    \nonumber
    & \textrm{where }~ 
    \quad 
   \hm^{Y,N}_n(\dd x):= \frac1N \sum_{j=1}^N \delta_{Y_{n}^{j,\star,N}}(\dd x),\qquad \hm^{Y,N}_{t_n}=\hm^{Y,N}_n, 
\end{align*}
and $\kappa(t)=\sup\big\{t_n: t_n\le t,\ n\in \llbracket 0,M-1 \rrbracket \big\}$.
\end{definition}
\color{black} 
\begin{theorem} [Convergence of the SSM]
\label{theorem:SSM: convergence all}
Let Assumption \ref{Ass:Monotone Assumption} hold for some  $m > 4q+4 >\max\{2(q+1),4\}$.
Choose $h$ as in \eqref{eq:h choice}. Then for the SSM scheme defined in \eqref{eq:SSTM:scheme 0}-\eqref{eq:SSTM:scheme 2}, we have the following properties.
\begin{enumerate}
    \item  The SSM is $C$-stable;
    \item  The SSM is $B$-consistent with $\gamma=1/2$ in Definition \ref{def:bc:def3:B-consist};
    \item  For $i\in\llbracket 1,N\rrbracket$, let $X^{i,N}$ be the solution to \eqref{Eq:MV-SDE Propagation}, 
    then there exists a constant $C>0$ (independent of $N$ and $h$) such that 
\begin{align*} 
     \sup_{i\in \llbracket 1,N \rrbracket} \sup_{t\in [0,T]}
  \bE\big[\,  |X_{t}^{i,N}-\hx_{t}^{i,N} |^2 \big]    &   \le Ch.
\end{align*}
    
\end{enumerate}

\end{theorem}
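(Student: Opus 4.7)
The plan is to verify items 1 and 2 ($C$-stability and $B$-consistency with $\gamma=1/2$) and then invoke Theorem~\ref{theorem:bc: convergence rate } to conclude item 3 at the grid nodes; the extension to the continuous interpolation of Definition~\ref{def:definition of the ssm continouse extension} would follow from writing $\hx_t^{i,N}-\hx_{\kt}^{i,N}$ as a short integral on $[\kt,t]$ and applying the growth bounds on $b$ and $\hs$ from Remark~\ref{remark:ImpliedProperties}. Throughout, a preliminary $L^2$-moment bound on $Y_n^{\star,N}$, uniformly in $N$ and in $n\le T/h$, is needed: take the inner product of the implicit relation \eqref{eq:SSTM:scheme 1} with $Y_n^{i,\star,N}$, sum in $i$, apply Lemma~\ref{remark:OSL for the whole function / system V}, the bound on $\langle x,v(x,\mu)\rangle$ in Remark~\ref{remark:ImpliedProperties}, and the stepsize constraint \eqref{eq:h choice} rewritten through Remark~\ref{remark: choice of h}; this yields stable $L^2$-moments of both $Y^{\star,N}$ and $\hx^{N}$.

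For item 1, I would take two identically distributed particle configurations $X_t^{i,N}, Z_t^{i,N}$ with implicit stages $Y^{X,\star}, Y^{Z,\star}$, subtract the implicit equations, and pair with $Y^{X,\star,i}-Y^{Z,\star,i}$. The one-sided Lipschitz property of $V$ from Lemma~\ref{remark:OSL for the whole function / system V} and the $h$-constraint then yield $\tfrac{1}{N}\sum_i|Y^{X,\star,i}-Y^{Z,\star,i}|^2\le(1+Ch)\tfrac{1}{N}\sum_i|X_t^{i,N}-Z_t^{i,N}|^2$, which also controls $\bigl(W^{(2)}(\hm_t^{Y,X,N},\hm_t^{Y,Z,N})\bigr)^2$ via the empirical coupling. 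I would then decompose $\Psi_i(X_t)-\Psi_i(Z_t)$ into its $\cF_t$-conditional mean, $(Y^{X,\star,i}-Y^{Z,\star,i})+h[b(\cdot,Y^{X,\star,i},\hm^{Y,X})-b(\cdot,Y^{Z,\star,i},\hm^{Y,Z})]$, bounded using $(\mathbf{A}^b)$ and Young's inequality, and its orthogonal part, $[\hs(\cdot,Y^{X,\star,i},\hm^{Y,X})-\hs(\cdot,Y^{Z,\star,i},\hm^{Y,Z})]\Delta W_n^i$, whose conditional $L^2$-norm equals $h|\hs-\hs|^2$. The delicate piece is this $\hs$-difference: I would apply the mixed monotonicity condition \eqref{eq:condition:driftoffsetdiffusion}, use the implicit equation to trade the resulting $\langle Y^{X,\star}-Y^{Z,\star}, u(\cdot)-u(\cdot)\rangle$ back into $|X_t^{i,N}-Z_t^{i,N}|^2$, and handle the $\fs\ast\mu$-component by representing $|(\fs\ast\mu)(x)-(\fs\ast\nu)(x)|^2$ through the empirical coupling so that only the $L^2$ norm $\tfrac{1}{N}\sum_i|\cdot|^2$ is invoked. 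The condition $\eta>1$ enters only through the factor $C_\eta=1+(\eta-1)^{-1}$ in Lemma~\ref{lemma:bc: diff leq sum}.

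For item 2, on each interval $[t_n,t_{n+1}]$ I would write the local error $E_n^i:=X_{t_{n+1}}^{i,N}-\Psi_i(X_{t_n}^{i,N},\mu_{t_n}^{X,N},t_n,h)$ in integral form and split it into drift and diffusion parts. The conditional mean of $E_n^i$ has $L^2$-norm of order $h^{3/2}$ (so $h^{2\gamma+2}$ after squaring): this uses Jensen's inequality, the $1/2$-H\"older-in-time assumption on $b$, the local Lipschitz bound \eqref{eq:condition:drifdiffusiongrowth} for $u$ together with the polynomial-in-$|x|^{q}$ growth of $f$, the $L^{2(q+1)}$-moment bound for $X^{i,N}$ provided by Theorem~\ref{theorem:Propagation of Chaos} (available since $m>2(q+1)$), the standard SDE-increment bound $\bE[|X_s^{i,N}-X_{t_n}^{i,N}|^{2(q+1)}]\le Ch^{q+1}$, and the ``internal-step'' estimate $\bE[|Y_n^{i,\star,N}-X_{t_n}^{i,N}|^2]\le Ch^2$ extracted from \eqref{eq:SSTM:scheme 1} and the $L^2$-moments of $v(Y^\star,\hm^Y)$. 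The orthogonal part has $L^2$-norm-squared of order $h^2$ (so $h^{2\gamma+1}$): by It\^o's isometry and the same polynomial/H\"older-in-time machinery, now applied to $\hs$. Item 3 then follows directly from Theorem~\ref{theorem:bc: convergence rate }.

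The \emph{main obstacle} is the orthogonal-part estimate in item 1: the diffusion $\hs$ is fully super-linear in both space and measure, so no Lipschitz control is available, and the super-linearity must be absorbed via \eqref{eq:condition:driftoffsetdiffusion} coupled to the implicit relation for $Y^\star$, with the convolution contribution routed through the empirical $N$-particle coupling to keep constants independent of $N$. A secondary difficulty is that only $L^2$-moments of $\hx^{i,N}$ and $Y^{\star,i,N}$ are available, so the entire scheme-level analysis must be arranged at the $L^2$ level; in particular, the additional symmetry condition in $(\mathbf{A}^f,\mathbf{A}^{\fs})$ does not enter the numerical arguments directly and is used only through Theorem~\ref{theorem:Propagation of Chaos} to secure the higher-moment bounds on the true particle system $X^{i,N}$ required in the $B$-consistency estimate.
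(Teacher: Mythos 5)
Your plan for items 1 and 2 matches the paper's route: establish $L^2$-moment stability of $(\hx_n^{i,N},Y_n^{\star,N})$ from the implicit relation and the monotonicity of $V$, verify $C$-stability by decomposing $\Psi_i(X)-\Psi_i(Z)$ into its $\cF_t$-conditional part and orthogonal part and absorbing the super-linear $\hs$-difference via \eqref{eq:condition:driftoffsetdiffusion} together with the implicit-stage contraction $\bE[|Y^{X,\star,i}-Y^{Z,\star,i}|^2]\le(1+Ch)\bE[|X^{i}-Z^{i}|^2]$, and prove $B$-consistency of order $\gamma=1/2$ at the true solution $X^{i,N}$ using its $L^m$-moments (which is precisely where the additional symmetry enters indirectly, through Theorem \ref{theorem:Propagation of Chaos}). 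Those two parts are sound; the only caveat worth flagging is that the $B$-consistency estimate as written invokes $\bE[|X_s^{i,N}-X_{t_n}^{i,N}|^{2(q+1)}]\le Ch^{q+1}$, which would require $L^{2(q+1)^2}$-moments of $X^{i,N}$ that are not available; the workable estimate couples the polynomial-growth factors to second and fourth increment moments via Cauchy--Schwarz (this is consistent with the constraint $m>4q+4$), so the strategy repairs easily even though the particular chain of estimates does not.

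The genuine gap is the continuous interpolation in item 3. Writing $\hx_t^{i,N}-\hx_{\kt}^{i,N}$ as a short integral and bounding it with the growth of $b$ and $\hs$ fails, because $|\hs(t,x,\mu)|^2\lesssim 1+|x|^{2q+2}+\int|y|^{2q+2}\mu(\dd y)$, so $\bE[|\hs(t_n,Y_n^{\star,N},\hm^{Y,N}_n)|^2]$ requires $\bE[|Y_n^{\star,N}|^{2q+2}]$ for the $Y^\star$ driven by the \emph{scheme} iterates $\hx_n^{i,N}$. But the only moment stability you have (and the only one obtainable, as the paper itself emphasizes) at the scheme level is $L^2$, so this direct bound is unavailable. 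The route that works is to pair $X_{t_n+r}^{i,N}$ and $\hx_{t_n+r}^{i,N}$ directly, write both as one-step evolutions from $t_n$ with aligned integrands (true coefficients at $X_s$ versus frozen coefficients at the interior stages $Y^{i,N}_n$ and $Y^{i,\star,N}_n$, respectively), and split the difference into a $B$-consistency block (local error of the true particle against the frozen-at-$X_{t_n}$ map) plus a $C$-stability-type block for which the inner-product expansion against $Y_n^{i,N}-Y_n^{i,\star,N}$ and the one-sided Lipschitz/monotonicity property of $(f,u,\sigma,\fs)$ convert the super-linear terms into $L^2$ differences of the interior stages, controllable by $|X_{t_n}^{i,N}-\hx_n^{i,N}|^2\le Ch$. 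Without this realignment the argument does not close.
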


Lastly, we present a result about long time stability  of the numerical scheme proposed as means to access the invariant distribution of the original MV-SDE by way of simulation. In other words, we provide sufficient conditions for our scheme to be \textit{mean-square contractive} as $T\to \infty$ in the sense of \cite[Definition 2.8]{2021SSM}.
\begin{theorem}
\label{theo:SSTM:stabilty}
Let the Assumptions of Theorem \ref{theorem:SSM: convergence all} and Theorem \ref{Thm:ergodicity}  hold. 
Suppose that $X_0\in L^m(\mathbb{R}^d)\nonumber$ and $Z_0\in L^m(\mathbb{R}^d)\nonumber$ for $m>4q+4$ as in Theorem \ref{theorem:SSM: convergence all}, and let $\hx_0^{i,N}$ and  $\hz_0^{i,N}$ be i.i.d.~copies of $X_0$ and $Z_0$ respectively, for all $i\in \llbracket 1,N \rrbracket$. 

Set $h>0$. For $i\in \llbracket 1,N \rrbracket$ and $n \in \llbracket 1,M \rrbracket$, define $(\hx_n^{i,N},Y^{i,X,N}_n)$ and $(\hz_n^{i,N},Y^{i,Z,N}_n)$ as the output of the SSM \eqref{eq:SSTM:scheme 1}-\eqref{eq:SSTM:scheme 2} (i.e., $\star= X,Z$)
corresponding to the empirical measure pairs $(\hm^{X,N}_n,\hm^{Y,X,N}_n)$ and $(\hm^{Z,N}_n,\hm^{Y,Z,N}_n)$
with initial conditions $X_0^{i,N}$ and $Z_0^{i,N}$ respectively. Then, for any $n \in \llbracket 1,M \rrbracket$, 
\begin{align*} 
\sup_{ i\in \llbracket 1,N \rrbracket}  \bE\big[\, |\hx_{n}^{i,N}-\hz_{n}^{i,N} |^2 \big]
\le    
 (1+\beta h )^n \sup_{ i\in \llbracket 1,N \rrbracket}\bE\big[ |\hx^{i,N}_{0}-\hz^{i,N}_{0} |^2\big],
\end{align*}
where we recall the parameters of Theorem \ref{Thm:ergodicity}, 
\begin{align*} 
\beta=
\frac{\rho_2+2L_{(b)}^{(1)} h }{1- h(4L^{(1),+}_{(f)}+2L^{(1)}_{(u\sigma)} +2L^{(2)}_{(u\sigma)  })},
\quad 
\rho_2= 4L^{(1),+}_{(f)}+2L^{(1)}_{(u\sigma)} +2L^{(2)}_{(u\sigma)}+2L_{(b)}^{(2)}+2L_{(b)}^{(3)}. 
\end{align*}
Under the choice of $h$ stated in Theorem \ref{theorem:SSM: convergence all}, the quantity $1+\beta h$ is always positive. 
If $\rho_2 < 0$ and $h$ sufficiently small then $\beta<0$ and thus the SSM is \textit{mean-square contractive} in the sense of \cite[Definition 2.8]{2021SSM}.
\end{theorem}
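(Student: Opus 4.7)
\medskip
\noindent\textbf{Proof plan.} By i.i.d.~initial data and exchangeability of the particle system, $S_n:=\bE[|\hx_n^{i,N}-\hz_n^{i,N}|^2]$ is independent of $i$, and similarly for $\bE[|Y_n^{i,X,N}-Y_n^{i,Z,N}|^2]$. The plan is to derive the one-step bound $S_{n+1}\le(1+\beta h)S_n$ by analyzing the SSM's implicit and explicit stages separately, then iterate. For the \emph{implicit stage}, subtract the defining relations for $Y_n^{i,X,N}$ and $Y_n^{i,Z,N}$, take the inner product with $Y_n^{i,X,N}-Y_n^{i,Z,N}$, and sum over $i\in\llbracket 1,N\rrbracket$. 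The combined monotonicity of $(v,\hs)$, in the spirit of Lemma~\ref{remark:OSL for the whole function / system V} but retaining the diffusion term, is obtained by (i) summing $(\mathbf{A}^u,\mathbf{A}^\sigma)$ over $i$ and using $N(W^{(2)}(\hm^{Y,X,N}_n,\hm^{Y,Z,N}_n))^2\le\sum_j|Y_n^{j,X,N}-Y_n^{j,Z,N}|^2$; (ii) symmetrizing the $f$-convolution via oddness of $f$ and invoking $(\mathbf{A}^f,\mathbf{A}^{\fs})$ pairwise; and (iii) controlling the $\fs$-convolution by Jensen. With shorthand $v_\star^i=v(Y_n^{i,\star,N},\hm^{Y,\star,N}_n)$, $\hs_\star^i=\hs(t_n,Y_n^{i,\star,N},\hm^{Y,\star,N}_n)$ and $c:=2L^{(1),+}_{(f)}+L^{(1)}_{(u\sigma)}+L^{(2)}_{(u\sigma)}$, this yields $\sum_i\langle Y_n^{i,X,N}-Y_n^{i,Z,N},v_X^i-v_Z^i\rangle+\tfrac{m-1}{2}\sum_i|\hs_X^i-\hs_Z^i|^2\le c\sum_i|Y_n^{i,X,N}-Y_n^{i,Z,N}|^2$. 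Substituting into the identity $\sum_i|Y_n^{i,X,N}-Y_n^{i,Z,N}|^2=\sum_i\langle Y_n^{i,X,N}-Y_n^{i,Z,N},\hx_n^{i,N}-\hz_n^{i,N}\rangle+h\sum_i\langle Y_n^{i,X,N}-Y_n^{i,Z,N},v_X^i-v_Z^i\rangle$ and applying Young's inequality produces the \emph{key estimate}
\begin{align*}
(1-2hc)\sum_{i=1}^N|Y_n^{i,X,N}-Y_n^{i,Z,N}|^2+h(m-1)\sum_{i=1}^N|\hs_X^i-\hs_Z^i|^2\le\sum_{i=1}^N|\hx_n^{i,N}-\hz_n^{i,N}|^2,
\end{align*}
with $1-2hc>0$ guaranteed by \eqref{eq:h choice}.

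\medskip
\noindent\textbf{Euler--Maruyama stage and recursion.} Conditioning on $\cF_{t_n}$ and using $\bE[\Delta W_n^i\mid\cF_{t_n}]=0$ together with It\^o isometry, a direct expansion gives $\bE[|\hx_{n+1}^{i,N}-\hz_{n+1}^{i,N}|^2\mid\cF_{t_n}]=|Y_n^{i,X,N}-Y_n^{i,Z,N}|^2+2h\langle Y_n^{i,X,N}-Y_n^{i,Z,N},b_X^i-b_Z^i\rangle+h^2|b_X^i-b_Z^i|^2+h|\hs_X^i-\hs_Z^i|^2$. Summing over $i$ and bounding the two $b$-contributions via $(\mathbf{A}^b)$ (with $N(W^{(2)})^2\le\sum_j|Y_n^{j,X,N}-Y_n^{j,Z,N}|^2$) yields
\begin{align*}
\sum_{i=1}^N\bE\big[|\hx_{n+1}^{i,N}-\hz_{n+1}^{i,N}|^2\bigm|\cF_{t_n}\big]\le\big(1+2h(L_{(b)}^{(2)}+L_{(b)}^{(3)})+2L_{(b)}^{(1)}h^2\big)\sum_{i=1}^N|Y_n^{i,X,N}-Y_n^{i,Z,N}|^2+h\sum_{i=1}^N|\hs_X^i-\hs_Z^i|^2.
\end{align*}
Solving the key estimate for $\sum_i|Y_n^{i,X,N}-Y_n^{i,Z,N}|^2$ and substituting, the Euler nuisance term $h\sum|\hs-\hs|^2$ is absorbed by the reserve $h(m-1)\sum|\hs-\hs|^2$ produced in the implicit stage (since $m>4q+4$ gives $m-1>3$, and the residual coefficient on $\sum|\hs-\hs|^2$ is non-positive in the parameter range of \eqref{eq:h choice}). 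Taking expectations and invoking exchangeability delivers $S_{n+1}\le\frac{1+2h(L_{(b)}^{(2)}+L_{(b)}^{(3)})+2L_{(b)}^{(1)}h^2}{1-2hc}\,S_n=(1+\beta h)\,S_n$. Iterating over $n\in\llbracket 1,M\rrbracket$ gives the claim. Positivity of $1+\beta h$ under \eqref{eq:h choice} and the sign of $\beta$ when $\rho_1<0$ with $h$ sufficiently small follow by direct inspection of the defining formulae.

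\medskip
\noindent\textbf{Main obstacle.} The crux of the argument is that neither $\hs=\sigma+\fs\ast\mu$ nor the convolution piece $\fs\ast\mu$ admits a pure Lipschitz bound, so the Euler--Maruyama term $h\sum_i|\hs_X^i-\hs_Z^i|^2$ cannot be controlled in isolation by a constant times $\sum_i|Y_n^{i,X,N}-Y_n^{i,Z,N}|^2$. The mechanism that rescues the argument is the prefactor $2(m-1)$ in front of the diffusion contributions in Assumption~\ref{Ass:Monotone Assumption}: combined with the SSM's implicit stage, it yields an $h(m-1)\sum|\hs-\hs|^2$ reserve on the left of the key estimate that precisely absorbs the explicit-stage $h\sum|\hs-\hs|^2$ contribution, leaving the pure $b$-driven rate $\rho_1$ as the effective one-step contraction factor.
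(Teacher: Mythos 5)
Your proposal is correct and follows essentially the same route as the paper's own proof: the same implicit/Euler two-stage decomposition, the same invocation of $(\mathbf{A}^u,\mathbf{A}^\sigma)$, $(\mathbf{A}^f,\mathbf{A}^{\fs})$ and $(\mathbf{A}^b)$, and the same final contraction factor $1+\beta h$. Your explicit key estimate $(1-2hc)\sum_i|Y_n^{i,X,N}-Y_n^{i,Z,N}|^2 + h(m-1)\sum_i|\hs_X^i-\hs_Z^i|^2 \le \sum_i|\hx_n^{i,N}-\hz_n^{i,N}|^2$ simply makes transparent the $(m-1)$-reserve absorption mechanism that the paper invokes more tersely via \eqref{eq: c-stable y-y f-f} and \eqref{eq:se1:y-y} when controlling the Euler-stage $h\sum_i|\hs_X^i-\hs_Z^i|^2$ term.
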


 

\section{Examples of interest}
\label{sec:examples}
 We illustrate the performance of the SSM on several numerical examples. 
 As the ``true'' solution of the considered models is unknown, the convergence rates for these examples are calculated in reference to a proxy solution given by an approximation at a smaller timestep $h$. The strong error between the proxy-true solution $X_T$ and approximation $\hat X_T$ is as follows
\begin{align*}
\textrm{root Mean-square error (rMSE)} 
= \Big( \bE\big[\, |X_T-\hat{X}_T|^2\big]\Big)^{\frac12}
\approx \Big(\frac1N \sum_{j=1}^N |X_T^j - \hat{X}_T^j|^2\Big)^\frac12.
\end{align*}
We also consider the path type strong error as follows 
\begin{align*}
\textrm{Strong error (path)} 
= \Big( \bE\big[\,\sup_{t\in[0,T]} |X_t-\hat{X}_t|^2\big]\Big)^{\frac12}
\approx \Big(\frac1N \sum_{j=1}^N \sup_{n\in \llbracket 0,M \rrbracket } |X_n^j - \hat{X}_n^j|^2\Big)^\frac12.
\end{align*} 
The propagation of chaos (PoC) rate between different particle systems $(\hx_T^{i,N_l})_{i,l}$ where $i$ denotes the $i$-th particle and $N_l$ denotes the size of the system, is measured by 
\begin{align}
\label{aux_PoC_rate_Estimator}
\textrm{Propagation of chaos Error (PoC-Error)} 
\approx \Big(\frac{1}{N_l} \sum_{j=1}^{N_l} |\hx_T^{j,N_l} - \hx_T^{j,N_{l+1}} |^2\Big)^\frac12.
\end{align}
Above $N_{l+1}=2N_l$ and the first half of the $N_{l+1}$ particles use the same Brownian motions as the whole $N_l$ particle system. In this section, the rMSE takes $h\in\{10^{-1},5\times10^{-2},2\times10^{-2},10^{-2},5\times10^{-3},2\times10^{-3},10^{-3}\}$ with $N=1000$, the proxy solution takes $h=10^{-4}$. The PoC takes $N\in \{40,80,160,320,640,1280\}$ with $h=10^{-3}$, the proxy solution takes $N=2560$.

\begin{remark}[`Taming' algorithm]
For comparative purposes, we implement the `\textit{Taming}' algorithm \cite{2021SSM,reis2018simulation} -- any convergence analysis of the taming algorithm in the framework of this manuscript is an open question. 
Of the many possible taming variants, we implement the following two cases: taming $f$ (and similarly $f_{\sigma}$) inside the convolution term (`Taming-in') and taming the convolution itself (`Taming-out'). Concretely, set $Mh=T$, then $f$ is replaced by (for $\alpha \in (0,1]$)
\begin{itemize}
    \item `Taming-out': $\int_{\bR^{d}  } f(\cdot-y) \mu(\dd y)$ is replaced by 
        $\int_{\bR^{d}  } f(\cdot-y) \mu(\dd y)/\big(~1+M^\alpha|\int_{\bR^{d}  } f(\cdot-y) \mu(\dd y)|~\big)$. 
    \item `Taming-in': $f$ is replaced by $f/\big(1+M^\alpha|f|\big)$. 
\end{itemize}
\end{remark}
Note that the proxy solution for the SSM is computed using the SSM and analogously for the taming schemes. For each example, the error rates of Taming and SSM are computed using the same Brownian motion paths and same initial data. {To avoid confusion later in the numerical results, we clarify that due to the super-linear convolution kernel, we do not expect the Taming method to converge. However, under mild initial conditions, it is rare to observe the divergence, so we test high variance cases to show the Taming method does not work in general while the SSM works as expected.} We remark that the first step \eqref{eq:SSTM:scheme 0} of the SSM requires to solve an implicit equation in $\bR^{Nd}$, which is done employing Newton's method (see \cite[Appendix B]{chen2022SuperMeasure} for details).

Below, the symbols $\cN(\alpha,\beta)$ denote the normal distribution with mean $\alpha\in \bR$ and variance $\beta\in (0,\infty)$, the symbol $U(a,b)$ denotes the uniform distribution over $[a,b]$ for $-\infty<a<b<\infty$, the symbol $B(c,p)$ denotes the binomial distribution for random variables $X$ such that $X=0$ with probability $p$ and $X=c$ with probability $1-p$.

\subsection{Example: Symmetric double-well type model}
\label{section:example: dw1}
We consider an extension to the symmetric double-well model \cite{2013doublewell} of confinement type with extra super-linearity \cite[Section 5]{2019taming} in the diffusion coefficient,
\begin{align}
\label{eq:example:toy1}
\dd  X_{t} &= \big(  v(X_{t},\mu_{t}^{X})+X_{t} \big) \dd t + (X_t+\tfrac{1}{4}X_t^2) \dd W_{t},~ 
v(x,\mu)= -\tfrac{1}{4} x^3+\int_{\bR  } -\big(x-y  \big)^3  \mu(\dd y).
\end{align}
The corresponding Fokker-Planck equation is 
    $ \partial_t \rho=\nabla   [~\nabla  \frac{\rho}{2}|x+\tfrac{1}{4}x^2|^2+\rho \nabla V+\rho \nabla W * \rho ]$ 
with $W=\tfrac{1}{4}|x|^4$, $V=\tfrac{1}{16}|x|^4-\tfrac{1}{2}|x|^2$, and $\rho$ is the corresponding density map. Due to the structure of the drift term, we expect three cluster states around $x\in\{-2,0,2\}$.

\begin{figure}[h!bt]
    \centering
    \begin{subfigure}{.58\textwidth}
    \setlength{\abovecaptionskip}{-0.02cm}
     \setlength{\belowcaptionskip}{-0.05 cm}  
			\centering
 			\includegraphics[scale=0.51]{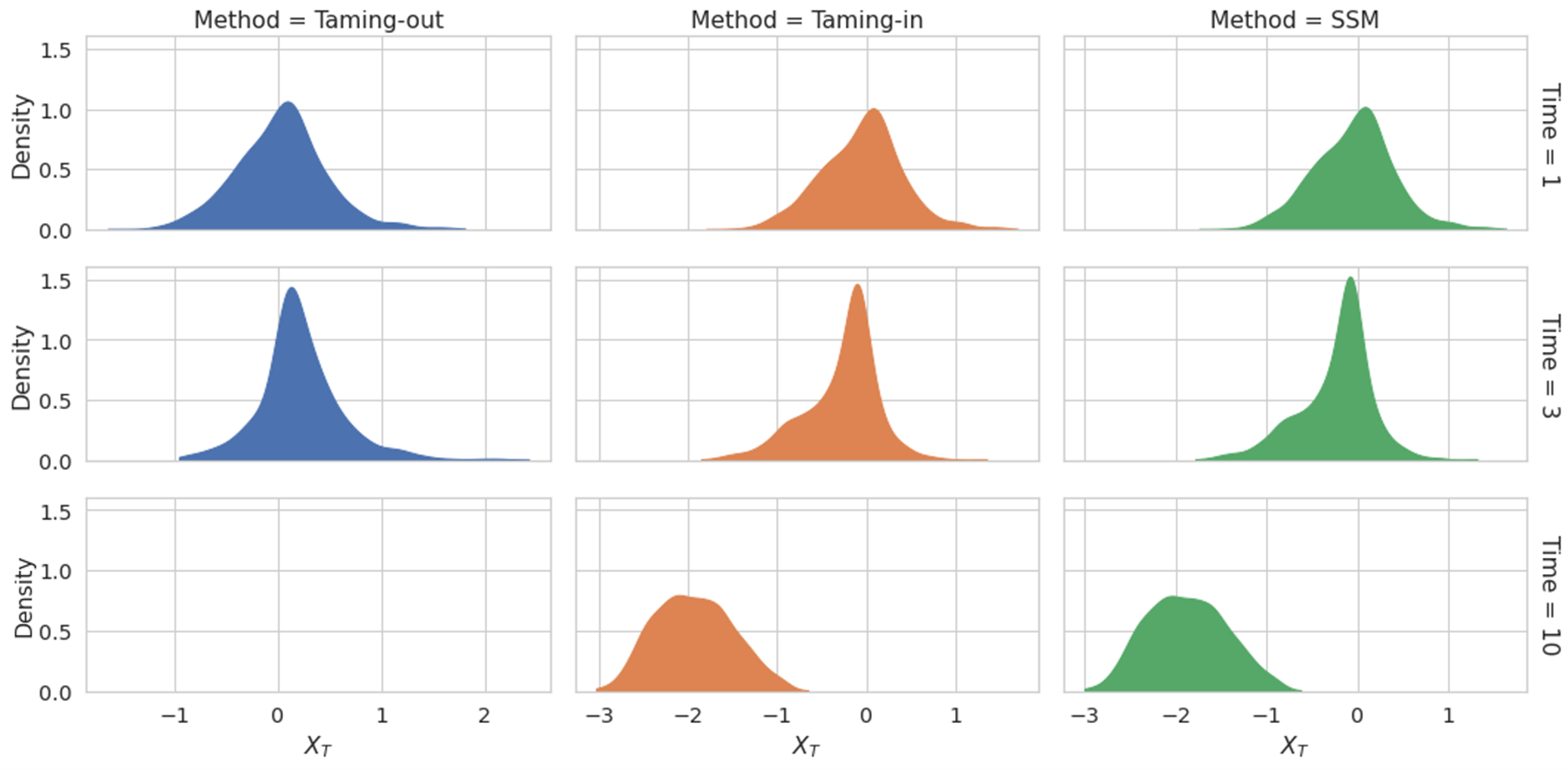}
			\caption{Density with $X_0\sim \cN(0,1)$}
		\end{subfigure}%
		\begin{subfigure}{.36\textwidth}          \setlength{\abovecaptionskip}{-0.02cm}
          \setlength{\belowcaptionskip}{-0.05 cm}
			\centering
 			\includegraphics[scale=0.26]{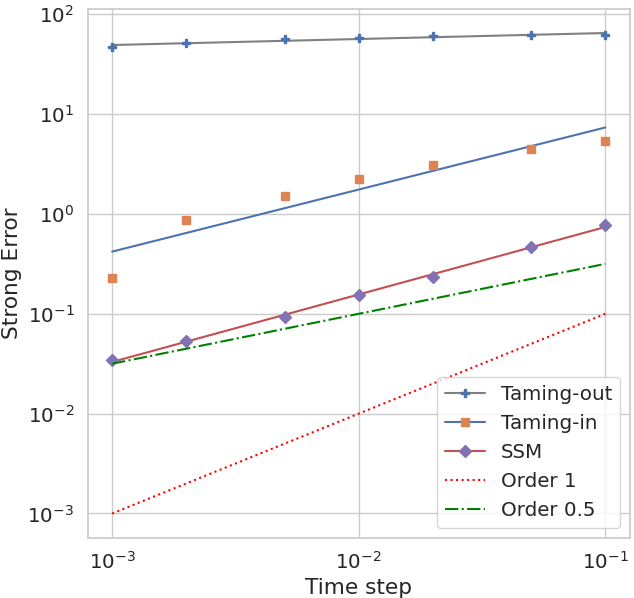}
			\caption{Strong error (rMSE) }
		\end{subfigure}	
		\\
    \centering
    \begin{subfigure}{.58\textwidth}
    \setlength{\abovecaptionskip}{-0.02cm}
     \setlength{\belowcaptionskip}{-0.05 cm}  
			\centering
 			\includegraphics[scale=0.51]{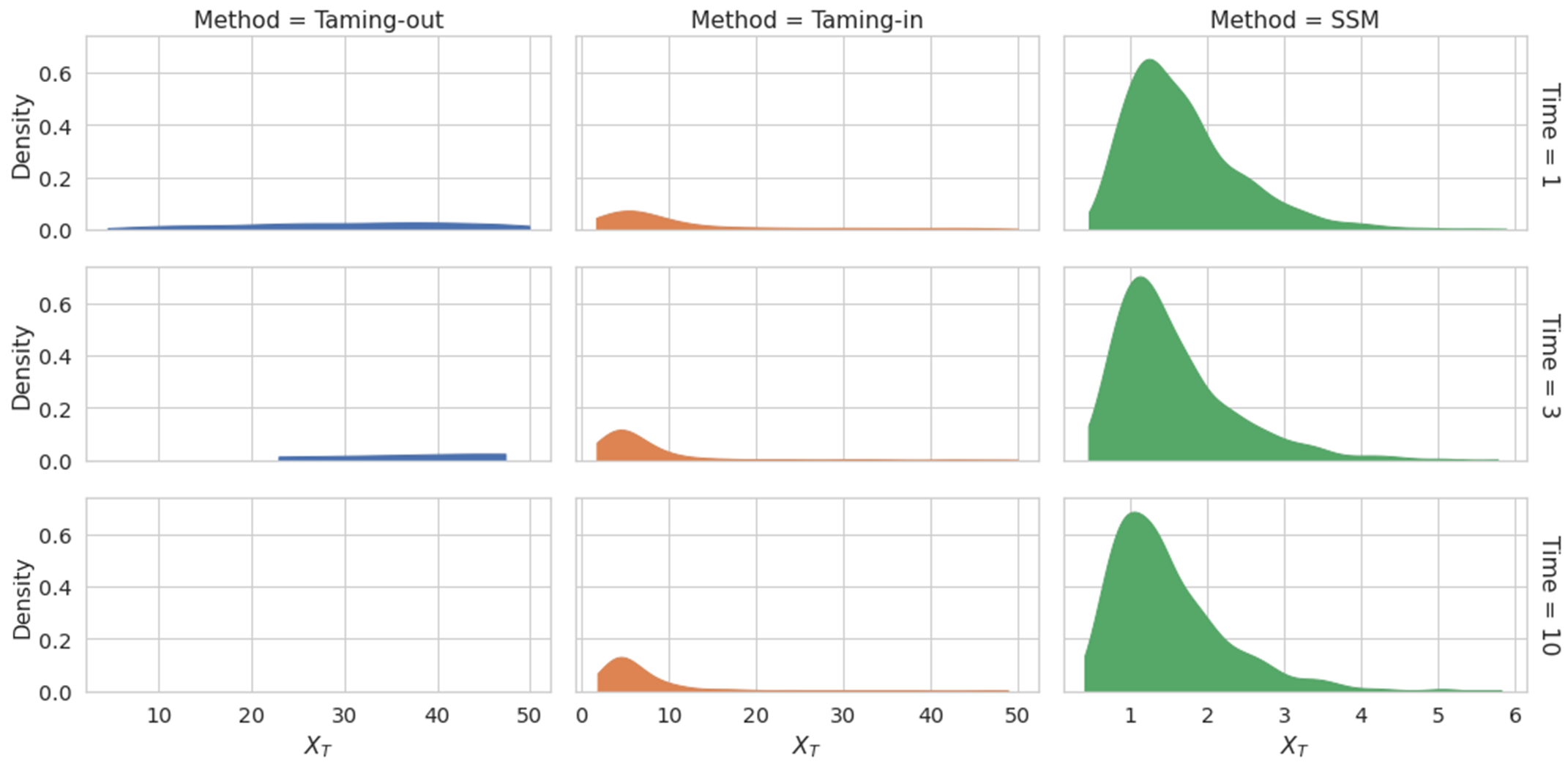}
			\caption{Density with $X_0\sim B(50,0.5)$}
		\end{subfigure}%
		\begin{subfigure}{.36\textwidth}          \setlength{\abovecaptionskip}{-0.02cm}
          \setlength{\belowcaptionskip}{-0.05 cm}
			\centering
 			\includegraphics[scale=0.26]{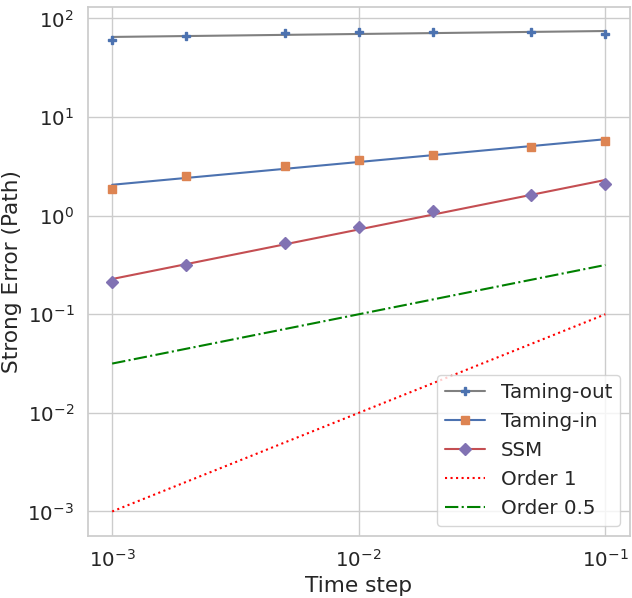}
			\caption{Strong error (Path) }
		\end{subfigure}	
    \caption{Simulation of the double-well model  \eqref{eq:example:toy1} with $N=1000$ particles. All schemes are initialized on the exact same samples. 
     (a)  and (c) show the density map for Taming-out (left),  Taming-in (middle) and SSM (right) with $h=0.01$  at times $T\in\{1,3,10\}$ seen top-to-bottom and with different initial distribution.  
     (b) Strong error (rMSE) of SSM and Taming with $X_0 \sim \cN(3,9)$ in log-scale. (d) Strong error (Path) of SSM and Taming with $X_0 \sim \cN(3,9)$ in log-scale. 
     }
    \label{fig:the toy example33}
\end{figure}

The goal of this example is to simulate the interacting particle system associated to \eqref{eq:example:toy1} up to $T=10$ using the three numerical methods available.  {\color{black} Note that Theorem \ref{Thm:ergodicity} does not apply here for the parameter choice in \eqref{eq:example:toy1}. }  
Figure \ref{fig:the toy example33} (a) and (c)  show the evolution of the density map at $T\in\{1,3,10\}$. In (a) with $X_0 \sim \cN(0,1)$, all three methods yield similar results,  but (c) shows that with $X_0  \sim B(50,0.5)$, Taming-out (blue, left) and  Taming-in fail to produce acceptable results, while the SSM produces the expected results.  
 
Figure \ref{fig:the toy example33} (b)  shows the strong convergence of the methods, Taming-out failed to converge. Taming-in and  the SSM converge under all time step choices (all satisfying \eqref{eq:h choice}) and nearly attain the $1/2$ strong error rate, the error of SSM is one order of magnitude smaller than the error of Taming-in. 
Figure \ref{fig:the toy example33} (d)  shows the path type strong convergence of both methods, and we observe that Taming-out and Taming-in failed to converge or at least converge with a very low rate. The SSM converges under all time step choices but the errors are one order of magnitude greater than the standard strong error.

As mentioned earlier, we do not have any theoretical support for the convergence of the taming methods. This example shows that a convergence proof for Taming-in might be feasible, possibly, under the caveat of an additional condition on the distribution/support of the initial condition -- this was fully unforeseen. These results for Taming-out are discouraging, nonetheless, under strong dissipativity Taming-out seems stable (see next example).

\subsection{Example: Approximating the invariant distribution}
This example aims to illustrate the long-time simulation for the purpose of approximating the invariant distribution of the system  
\begin{align}
\label{eq:example:toy222}
\dd  X_{t} &= \big(  v(X_{t},\mu_{t}^{X})-X_{t} \big) \dd t + \tfrac{1}{4}(1-X_t^2) \dd W_{t},~  
v(x,\mu)= - x^3+\int_{\bR  } -\big(x-y  \big)^3  \mu(\dd y).
\end{align}
The corresponding Fokker-Planck equation is 
    $ \partial_t \rho=\nabla   [~\nabla  \frac{\rho}{32}|1-x^2|^2+\rho \nabla V+\rho \nabla W * \rho ]$ 
with $W=\frac{1}{4}|x|^4$, $V=\frac{1}{4}|x|^4+\frac{1}{2}|x|^2$, and $\rho$ is the corresponding density map. We know that there is a unique invariant distribution, see Theorem \ref{Thm:ergodicity}. Here, the cluster state is $x =0$.  
\begin{figure}[h!bt]
    \centering
    \begin{subfigure}{.57\textwidth}
    \setlength{\abovecaptionskip}{-0.02cm}
     \setlength{\belowcaptionskip}{-0.05 cm}  
			\centering
 			\includegraphics[scale=0.23]{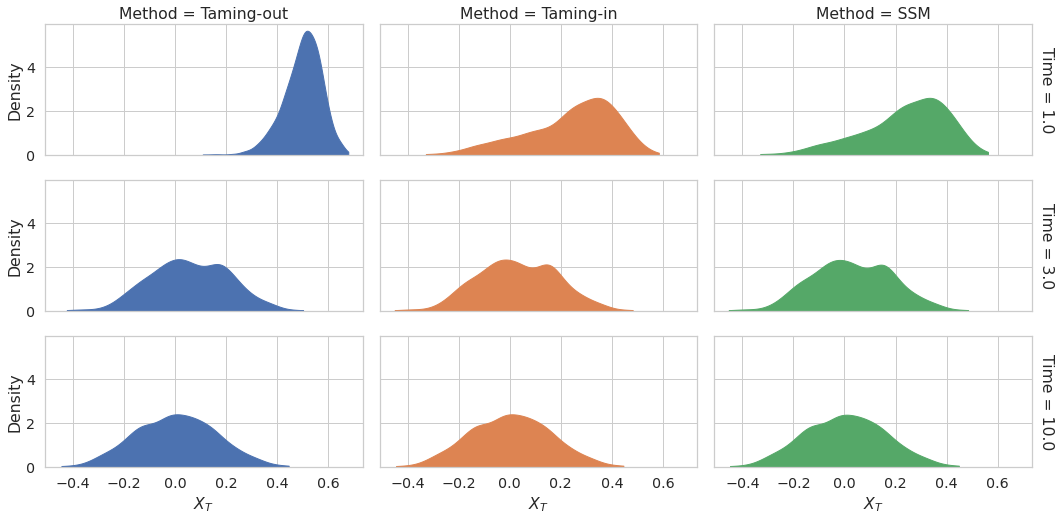}
			\caption{Density with $X_0\sim \cN(2,16)$}
		\end{subfigure}%
		\begin{subfigure}{.41\textwidth}
		\setlength{\abovecaptionskip}{-0.02cm}
		\setlength{\belowcaptionskip}{-0.05 cm} 
			\centering
 			\includegraphics[scale=0.27]{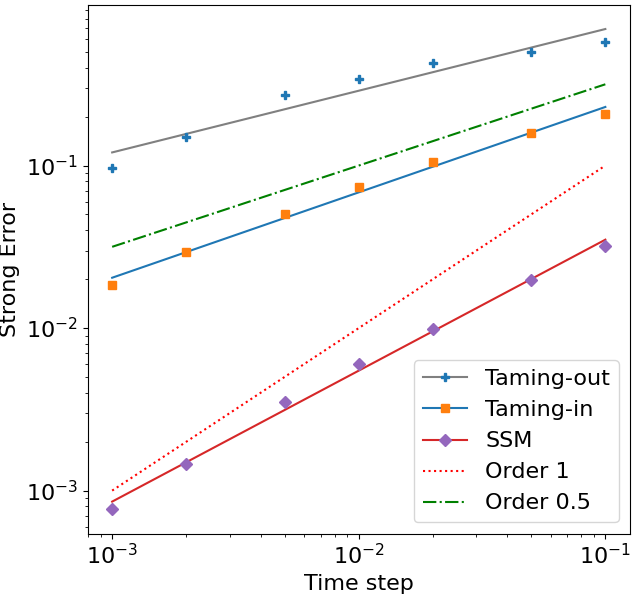}
			\caption{Strong error }
		\end{subfigure}
		\\
        \begin{subfigure}{.57\textwidth}
          \setlength{\abovecaptionskip}{-0.02cm}
          \setlength{\belowcaptionskip}{-0.2 cm}
			\centering
 			\includegraphics[scale=0.23]{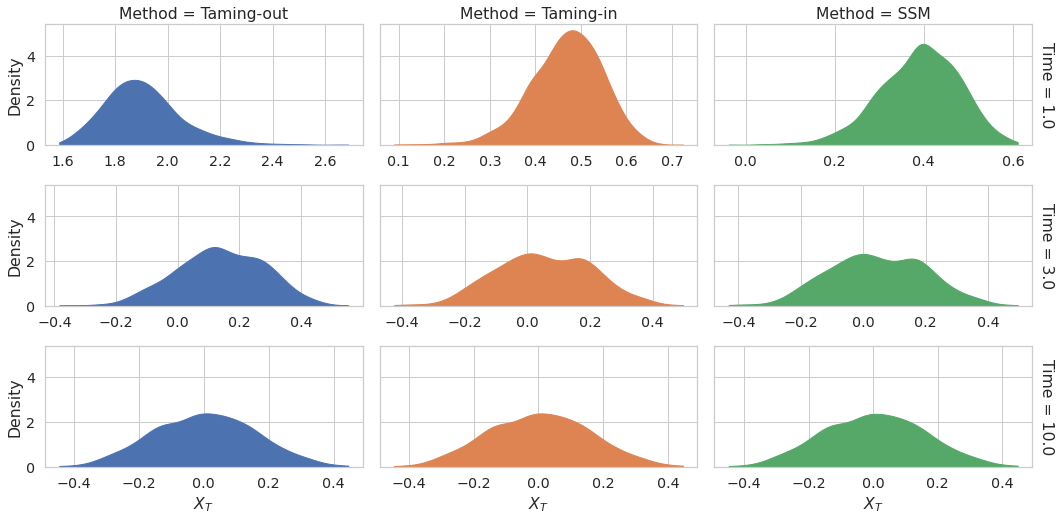}
			\caption{Density with $X_0\sim U(4,12)$}
		\end{subfigure}%
		\begin{subfigure}{.41\textwidth}
		\setlength{\abovecaptionskip}{-0.02cm}
		\setlength{\belowcaptionskip}{-0.2 cm}
			\centering
 			\includegraphics[scale=0.27]{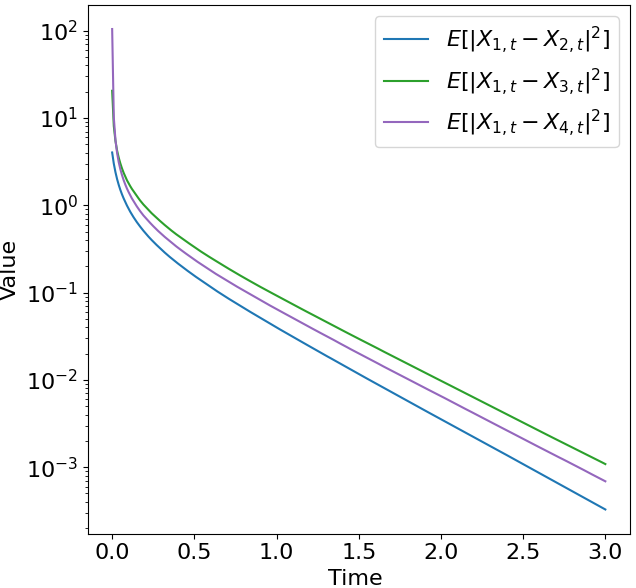}
			\caption{Expected particle distance (SSM)}
		\end{subfigure}
	\setlength{\belowcaptionskip}{-0.2cm} 
    \caption{Approximation of the invariant distribution of   \eqref{eq:example:toy222} with $N=1000$ particles. The simulated Brownian motion paths and initial distribution are the same for all schemes.  
     (a) and (c) show the distribution for Taming-out (left),  Taming-in (middle) and SSM (right) with $h=0.01$  at times $T\in\{1,3,10\}$ seen top-to-bottom and with different initial distribution; $x$- and $y$-scales are fixed.  
     (b) Strong error (rMSE) of SSM and Taming with $X_0 \sim \cN(2,16)$.  (d) Expected distance (in log-scale) between particles under different initial distributions with $h=10^{-3}$ for the SSM.  
     }
    \label{fig:3:the toy example3}
\end{figure}

Figure \ref{fig:3:the toy example3} (a) and (c) show the evolution of the particle distribution under different initial conditions. All three methods produce similar outputs at $T\in\{3,~ 10\}$, with Taming-out taking longer to contract and to converge than the other methods under $X_0 \sim \cN(2,16)$ in (a) and  $X_0 \sim U(4,12)$ in (c). The similar results obtained at $T\in\{3,10\}$ are due to the fact that the model \eqref{eq:example:toy222} has an invariant distribution and the initial distribution is compactly supported around the cluster state $x=0$.  

Figure \ref{fig:3:the toy example3} (b) illustrates the strong convergence of the three methods: they all converge and the rates are of order close to $1/2$, the SSM outperforms the other two methods by $1$ to $2$ orders of magnitude. 
Figure \ref{fig:3:the toy example3} (d) depicts the expected exponential decay rate for the SSM under different initial conditions of Theorem \ref{Thm:ergodicity}: $X_{1,0}\sim \cN(0,1)$, $X_{2,0}\sim U(-3,3)$, $X_{3,0}\sim \cN(2,16)$, $X_{4,0}\sim \cN(2,100)$ (same Brownian motion samples).

\subsection{Example: Kinetic 2d Van der Pol  oscillator and periodic phase-space}
We consider a two-dimensional Van der Pol (VdP) oscillator model with added  super-linearity terms.    
The VdP model was proposed to describe stable oscillation \cite[Section 4.2 and 4.3]{HutzenthalerJentzen2015AMSbook} and for a system of many coupled oscillators in the presence of noise the limit model is a MV-SDE \cite{Spigler2005KuramotoModel}. Here, we build a two-dimensional VdP-type model with mean-field components and super-diffusivity that features a periodicity of phase-space to show that the SSM preserves the theoretical periodic behaviour in simulation scenarios -- see \cite[Section 7.3]{buckwarbrehier2021FHNmodelandsplittingBSTT2020}. 

Set $x=(x_1,x_2)\in\bR^2$ and define the functions $f,u,b,\sigma$ as 
\begin{align}
\label{eq:example:vdp}
f(x)=-x|x|^2
,~
u(x)=
\left[\begin{array}{c}
-\frac13 x_1^3 \\
0
\end{array}\right],
~
 b(x)=
\left[\begin{array}{c}
x_1-x_2 \\
x_1
\end{array}\right],
~
\sigma (x)=
\left[\begin{array}{ccc}
1+1/4 ~x_1^2 & 0 \\
0 &  0
\end{array}\right],
\end{align}
where $f$ satisfies $ (\mathbf{A}^f)$.

\begin{figure}[h!bt]
	    \centering
		\begin{subfigure}{.19\textwidth} 
			\centering
 			\includegraphics[scale=0.24]{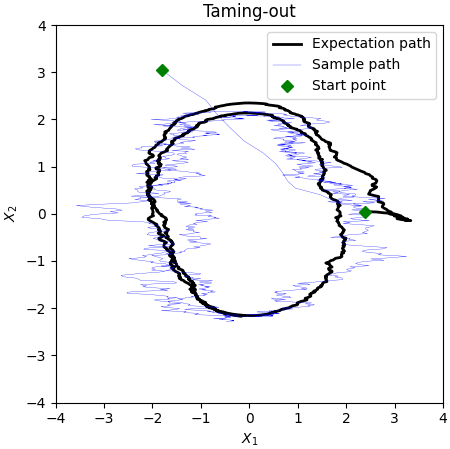}
			\caption{ $N=50$}
		\end{subfigure}
		\begin{subfigure}{.19\textwidth} 
			\centering
 			\includegraphics[scale=0.24]{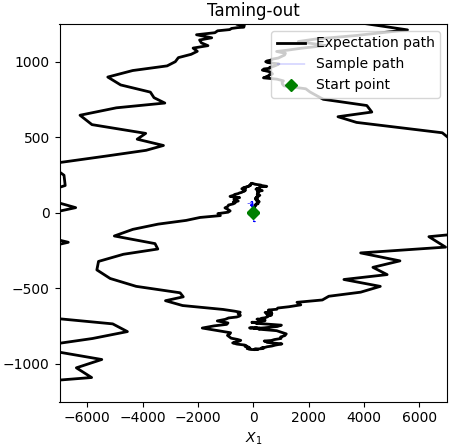}
			\caption{ $N=200$}
		\end{subfigure}
		\begin{subfigure}{.19\textwidth} 
			\centering
 			\includegraphics[scale=0.24]{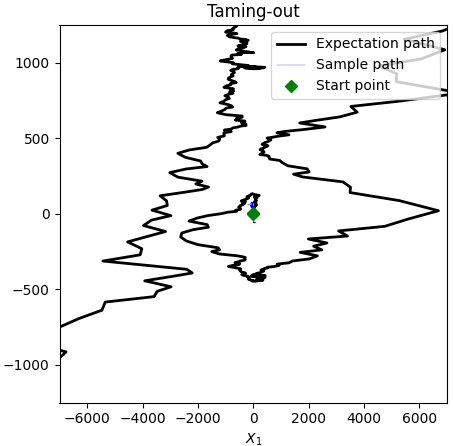}
			\caption{ $N=500$}
		\end{subfigure}
		\begin{subfigure}{.19\textwidth} 
			\centering
 			\includegraphics[scale=0.24]{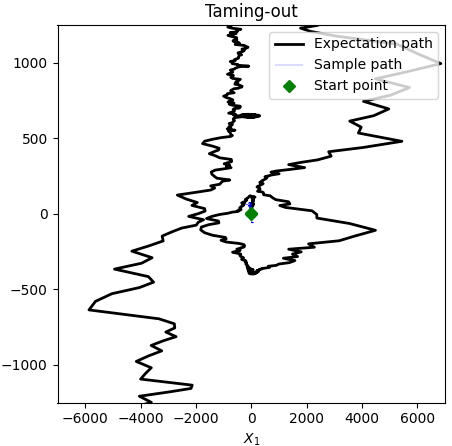}
			\caption{ $N=1000$}
		\end{subfigure}
		\begin{subfigure}{.19\textwidth} 
			\centering
 			\includegraphics[scale=0.24]{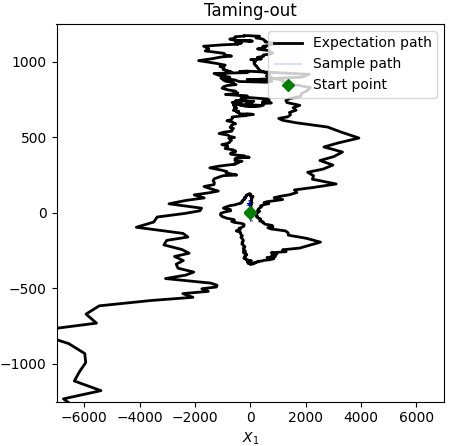}
			\caption{  $N=2000$}
		\end{subfigure}
	\\
	    \centering
		\begin{subfigure}{.19\textwidth} 
			\centering
 			\includegraphics[scale=0.24]{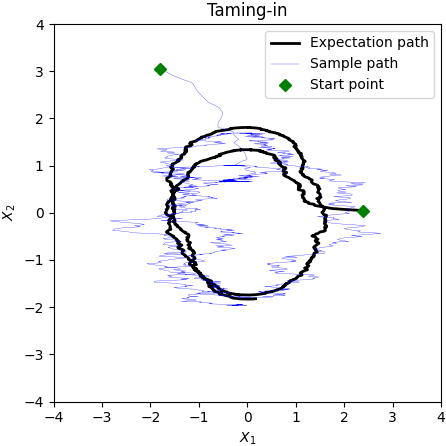}
			\caption{ $N=50$}
		\end{subfigure}
		\begin{subfigure}{.19\textwidth} 
			\centering
 			\includegraphics[scale=0.24]{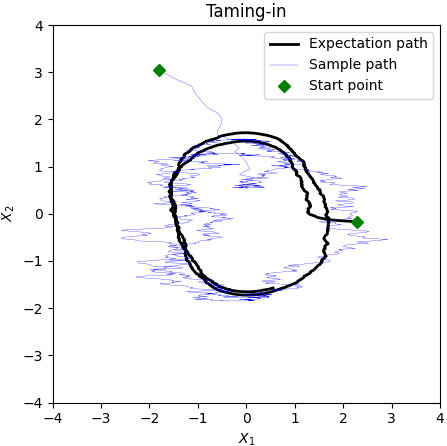}
			\caption{ $N=200$}
		\end{subfigure}
		\begin{subfigure}{.19\textwidth} 
			\centering
 			\includegraphics[scale=0.24]{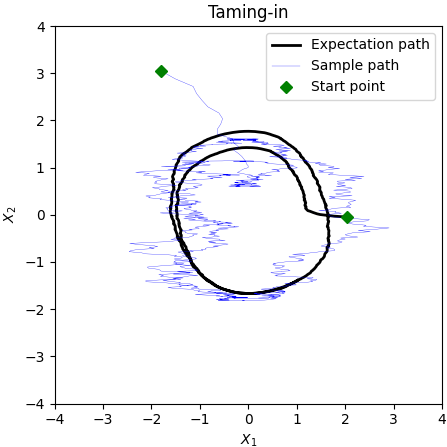}
			\caption{ $N=500$}
		\end{subfigure}
		\begin{subfigure}{.19\textwidth} 
			\centering
 			\includegraphics[scale=0.24]{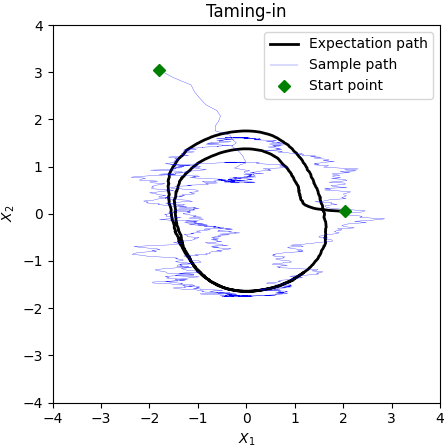}
			\caption{  $N=1000$}
		\end{subfigure}
		\begin{subfigure}{.19\textwidth} 
			\centering
 			\includegraphics[scale=0.24]{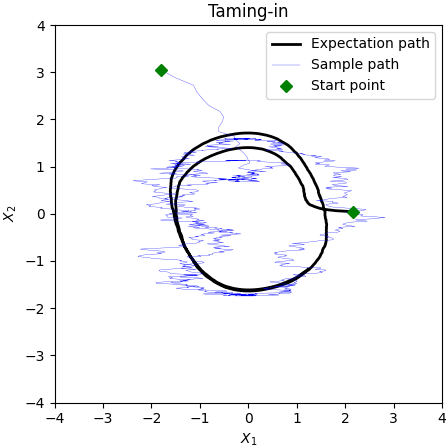}
			\caption{ $N=2000$}
		\end{subfigure}
		\\
		     \centering
		\begin{subfigure}{.19\textwidth} 
			\centering
 			\includegraphics[scale=0.24]{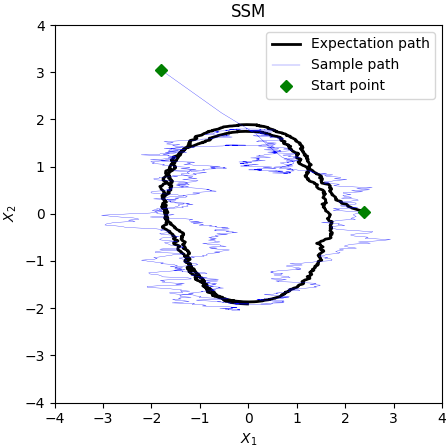}
			\caption{  $N=50$}
		\end{subfigure}
		\begin{subfigure}{.19\textwidth} 
			\centering
 			\includegraphics[scale=0.24]{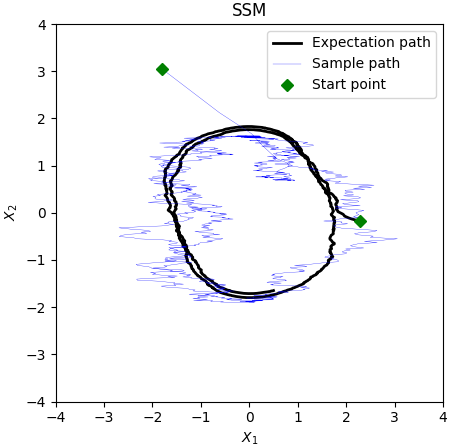}
			\caption{  $N=200$}
		\end{subfigure}
		\begin{subfigure}{.19\textwidth} 
			\centering
 			\includegraphics[scale=0.24]{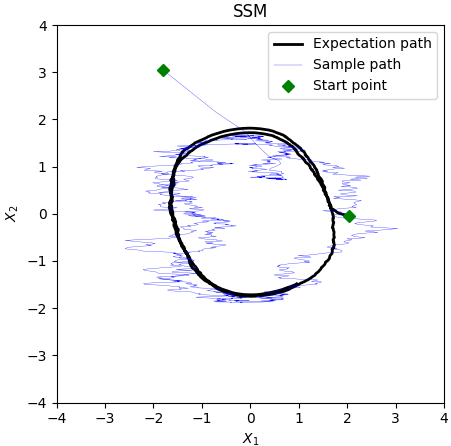}
			\caption{ $N=500$}
		\end{subfigure}
		\begin{subfigure}{.19\textwidth} 
			\centering
 			\includegraphics[scale=0.24]{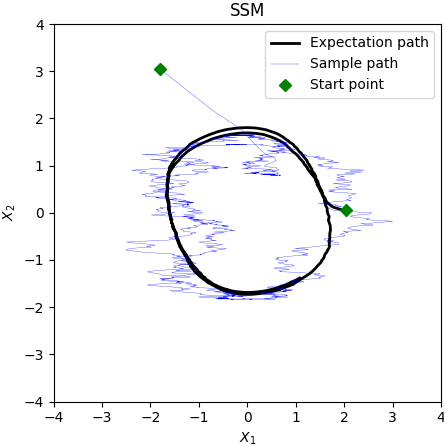}
			\caption{  $N=1000$}
		\end{subfigure}
		\begin{subfigure}{.19\textwidth} 
			\centering
 			\includegraphics[scale=0.24]{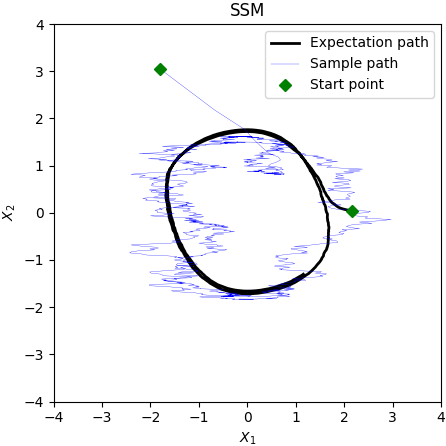}
			\caption{  $N=2000$}
		\end{subfigure}
    \caption{Simulation of the Vdp model \eqref{eq:example:vdp} with a different number of particles and $h=10^{-2}$, $T=12$, $X_{1,0}\sim \cN(2,16),X_{2,0}\sim \cN(0,16)$. 
     (a)(b)(c)(d)(e) are phase portraits of the Taming-out method with different choices of $N$.  (f)(g)(h)(i)(j) are phase portraits of the Taming-in method with different choices of $N$.  (k)(l)(m)(n)(o) are phase portraits of the SSM with different choices of $N$.  
     }
    \label{fig:4:multi d example}
\end{figure}

Figure \ref{fig:4:multi d example}   (a)-(o) show the system's phase-space portraits (i.e., the parametric plot of $t\mapsto (X_{1,t},X_{2,t})$ and $t\mapsto (\bE[X_{1,t}],\bE[X_{2,t}])$) for the three methods with different choices of $N$.

In the first row of Figure \ref{fig:4:multi d example}, (a)-(e) shows the result of the Taming-out method, the system fails to converge for $N>50$. 
The second row and third row of Figure \ref{fig:4:multi d example} show the result of Taming-in and the SSM, both methods converge and the  trajectory becomes smoother as more particles are taken. However, there is a big difference on the expectation trajectories of the SSM and Taming in, the expectation trajectories of the SSM do not cross themselves while the expectation trajectories of Taming-in always cross themselves, which is not expected since the slope fields  of the VdP model are smooth and do not admit the cross.   
Moreover, comparing the first few steps in the sample paths, the particles generated by the SSM concentrate to the expectation path within two steps  while the one generated by Taming-in takes about 10 steps. This is because the SSM preserves the super-linear power from the convolution kernel while the Taming-in turns this power to an asymptotic linear one. 
Thus, the SSM preserves more geometric properties than the taming method even though the approximation obtained via taming may not blow up.

\subsection{Example: Super-linear growth of measure components in diffusion}
This example illustrates the effect of two additional types of measure-nonlinearities included in the diffusion term; Case 1 corresponds to a convolution term in the diffusion and Case 2 is a variance-type term (which is beyond the scope of the paper). Note that the assumptions of the wellposedness result are not satisfied as the estimate \eqref{eq:condition:driftoffsetdiffusion} does not hold (but could readily be achieved by slightly modifying the constants of the coefficients), which indicates that this bound is not sharp.
We consider
\begin{align}
\label{eq:example:supermeasurediff}
\dd  X_{t} &= \big(  v(X_{t},\mu_{t}^{X})+X_{t} \big) \dd t + \big(X_t+\tfrac{1}{4}X_t^2+ f_{\sigma}(X_{t},\mu_{t}^{X}) \big) \dd W_{t},~ 
\\
\nonumber
\textit{with}
~
v(x,\mu)&= - \tfrac{1}{4} x^3+\int_{\bR   } -\big(x-y  \big)^3  \mu(\dd y),
\\\nonumber
f_{\sigma}(x,\mu) &=
\begin{cases}
    \int_{\bR   } \big(x-y  \big)^2  \mu(\dd y), \textit{Case 1,}
    \\
    \int_{\bR   } \int_{\bR   }\big(y-z  \big)^2  \mu(\dd y)\mu(\dd z),\textit{Case 2.}
\end{cases}
\end{align}
For Case 1, we have a nonlinear convolution kernel $f_{\sigma}(x) = x^2$ for all $x \in \mathbb{R}$. Figure \ref{fig:4:the super measure diffusion}, in particular, subplots (a)-(c), illustrates that the SSM converges, in a pointwise sense, with strong order $1/2$ and recovers reasonable density estimates for different choices of the initial distribution. Similar behaviour is not observed for different taming approaches which fail to recover the anticipated strong convergence order of $1/2$ and we observe that taming schemes do not capture the density of the solution well for high-variance initial data. We conducted an analogous test with $v(x,\mu) = -  x^3/4$ in (d), i.e., we removed the convolution term in the drift, and our experiments failed, in the sense that the approximate solutions computed by the SSM did not converge. This supports our theoretical results that a suitable drift compensation for the nonlinear measure component appearing in the diffusion is indeed needed. 

Case 2 corresponds to an example, where the convolution term is again integrated, i.e., resembles a variance-type term. We are not aware of an existing result that yields wellposedness of the underlying MV-SDE including such a term (even without the nonlinear convolution terms). Further, it is not clear which assumptions would be required for a numerical scheme to converge in a strong sense. The expected strong convergence order is observed for the SSM in (e), but no taming approach appears to be a reasonable alternative. We additionally conducted a numerical experiment for Case 2 with $v(x,\mu) = -  x^3/4$, in order to investigate if the variance-type term requires a compensation term (similar to changed Case 1). We also observed that no time-stepping scheme (i.e., taming and SSM) seemed to converge (the result is similar to (d) and we do not present here),  which again indicates that the drift's convolution term can also help to control variance-type terms in the diffusion.
\begin{figure}[h!bt]
    \centering
    \begin{subfigure}{.57\textwidth}
    \setlength{\abovecaptionskip}{-0.02cm}
     \setlength{\belowcaptionskip}{-0.05 cm}  
			\centering
 			\includegraphics[scale=0.51]{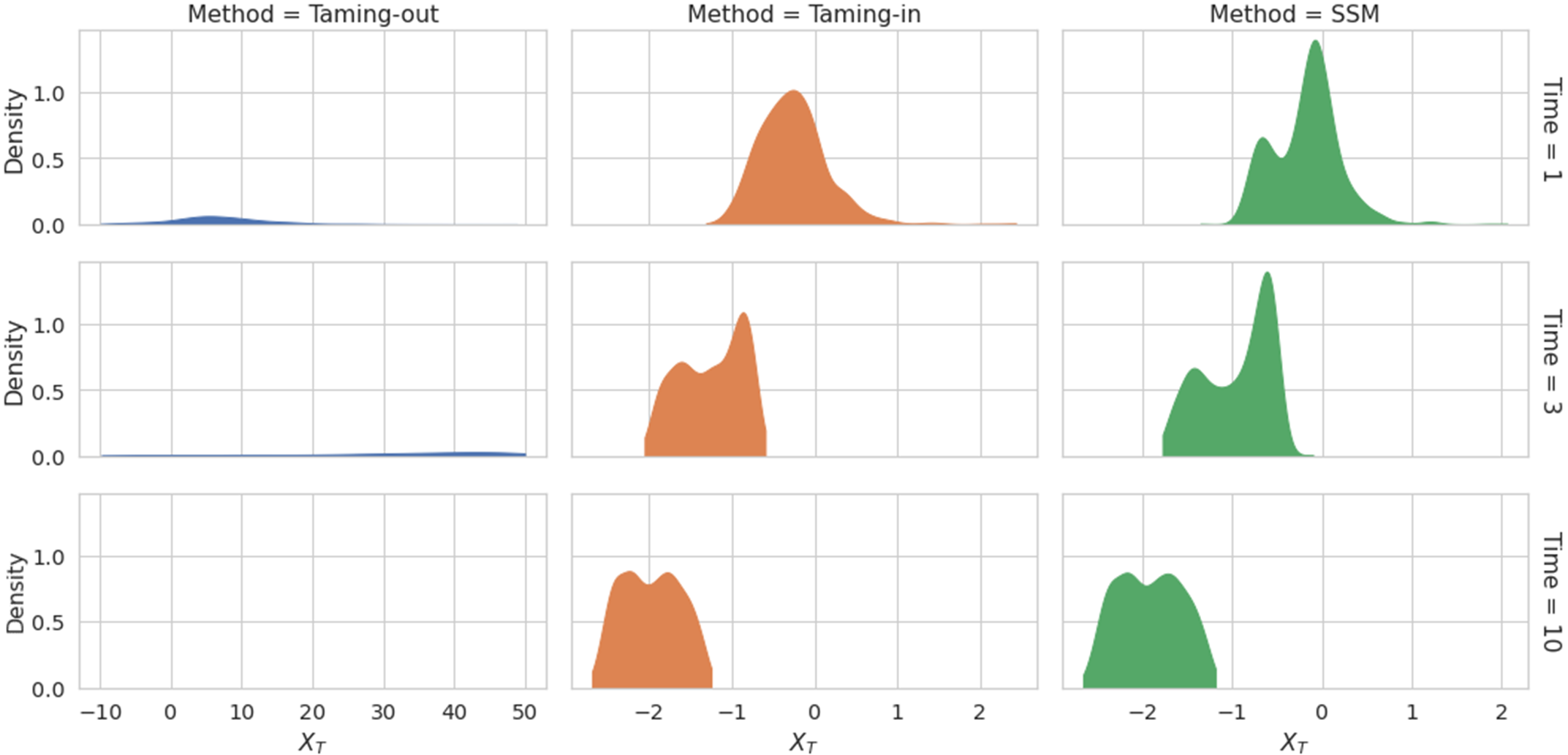}
			\caption{Case 1 with $X_0\sim \cN(0,1)$}
		\end{subfigure}%
		\begin{subfigure}{.41\textwidth}
		\setlength{\abovecaptionskip}{-0.02cm}
		\setlength{\belowcaptionskip}{-0.05 cm} 
			\centering
 			\includegraphics[scale=0.26]{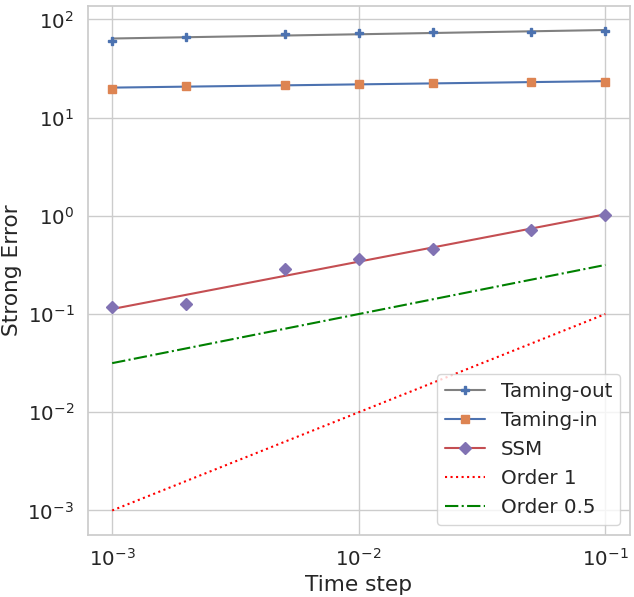}
			\caption{Case 1-Strong error(SE) }
		\end{subfigure}
		\\
            \begin{subfigure}{.40\textwidth}
			\centering
 			\includegraphics[scale=0.22]{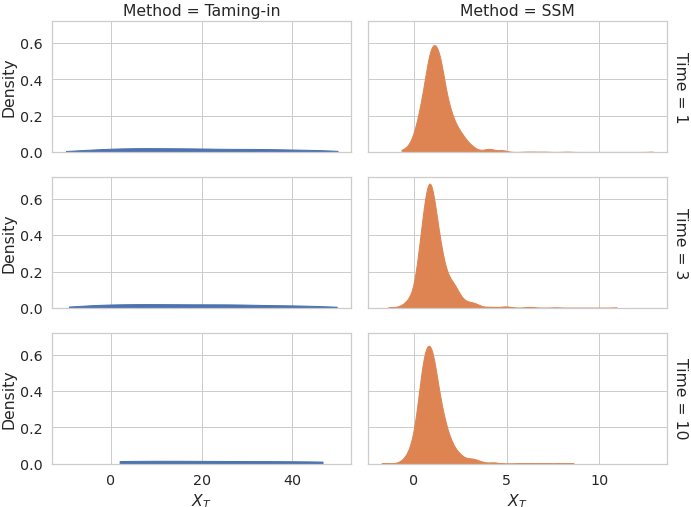}
    \setlength{\abovecaptionskip}{-0.02cm}
 			\caption{  Case 1 with $X_0\sim B(50,0.5)$ }
		\end{subfigure}%
	\begin{subfigure}{.29\textwidth}
			\centering
 			\includegraphics[scale=0.25]{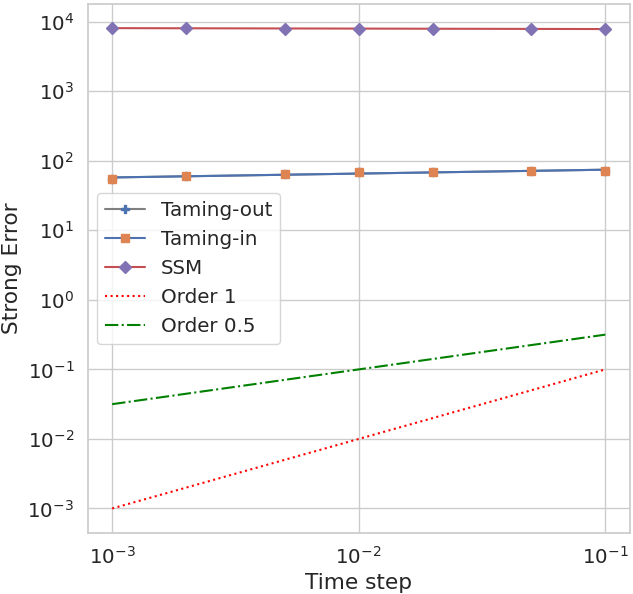}
 			\caption{ Changed Case 1-SE   }
		\end{subfigure}%
	\begin{subfigure}{.29\textwidth}
			\centering
 			\includegraphics[scale=0.25]{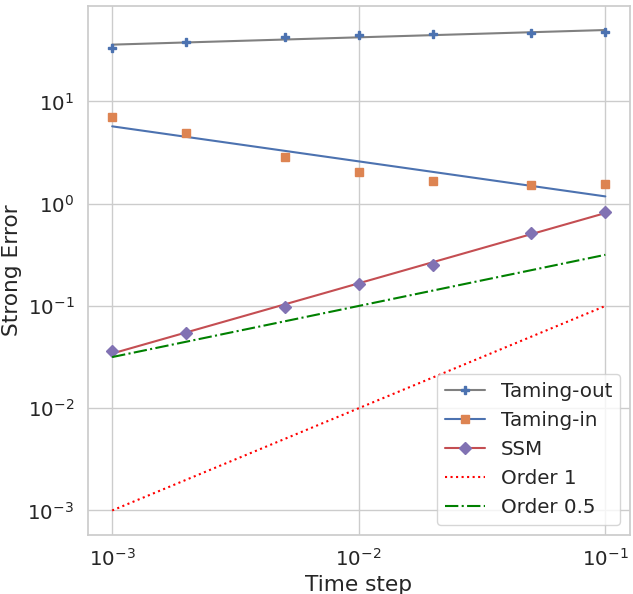}
 			\caption{Case 2-SE }
		\end{subfigure}%
    \caption{Approximation of \eqref{eq:example:supermeasurediff} with $N=1000$ particles. The simulated Brownian motion sample paths and initial distribution are the same for all schemes.  
     (a) and (c)  show the distribution for Taming-out (left),  Taming-in (middle) and SSM (right) with $h=0.01$  at times $T\in\{1,3,10\}$ seen top-to-bottom and with different initial distribution; $x$- and $y$-scales are fixed.  
     (b), (d) and (e) show the strong error (rMSE) of SSM and Taming with $X_0 \sim \cN(1,1)$ for different cases. The changed Case 1 in (d) is Case 1 with $v(x,\mu)=-x^3/4.$ 
     }
    \label{fig:4:the super measure diffusion}
\end{figure}

\subsection{Example: Propagation of Chaos rate across dimensions}

In this example, we estimate the PoC rate depending on the dimension and compare the findings to the theoretical upper bounds established in Theorem \ref{theorem:Propagation of Chaos}. For equation \eqref{Eq:General MVSDE}-\eqref{Eq:General MVSDE shape of v} we make the following choices: Let $d\geq 2$, $x=(x_1,\ldots,x_d)\in \bR^d$, the initial condition $X_0$ is a vector distributed according to $d$-independent $\cN(1,1)$-random variables, and 
\allowdisplaybreaks
\begin{align}
\nonumber
f(x)&=-x|x|^2
,\quad
u (x ) 
=
-\frac13
\left[\begin{array}{cccc}
 x_1^3  , x_2^3 , \cdots , x_d^3 \end{array}\right]^\intercal,
\quad 
b(t,x,\mu)=x,
\quad
\\
\hs (x )
&=
\left[\begin{array}{cccc}
x_1+1/4 ~x_1^2 & x_2 & \cdots & x_d \\
x_1 & x_2+1/4 ~x_2^2  & \cdots & x_d \\
\cdots & \cdots  & \cdots & \cdots   \\
x_1 & x_2  & \cdots & x_d+1/4 ~x_d^2 \\
\end{array}\right].
\label{eq:example:poc}
\end{align}
This is a toy model with a high-dimensional fully coupled convolution kernel and super-linear diffusion term. 
We observe in Figure \ref{fig:4:PoC example} a strong PoC rate, estimated via \eqref{aux_PoC_rate_Estimator}, of order of roughly $1/2$ across dimension $d$. By the ordinary least squares linear regression, for dimension $d\in\{2,3,4,6,10\}$, the corresponding slopes are $\{\textrm{slopes}_d\}_d=\{-0.55,-0.57,-0.5,-0.50,-0.49\}$ and the corresponding $R$-square measure is $\{R^2_d\}_d=\{0.81,0.75,0.92,0.91,0.98\}$.
\begin{figure}[h!bt]
    \centering
    \begin{subfigure}{.33\textwidth}
			\centering
 			\includegraphics[scale=0.28]{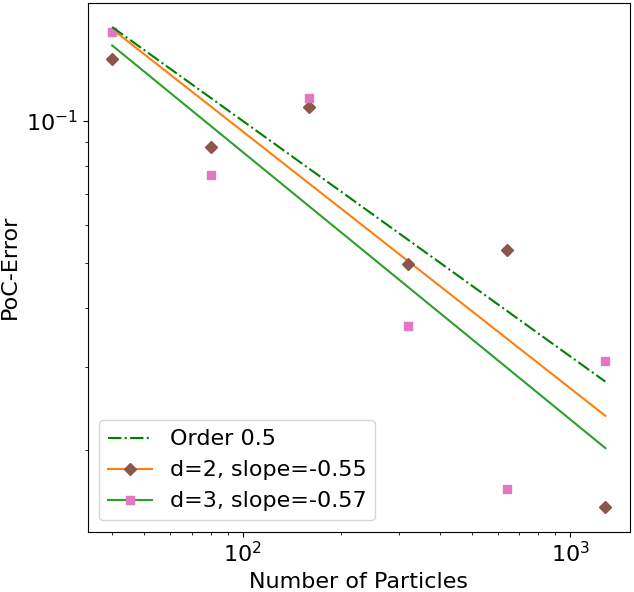}
 			\caption{PoC rates for $d\in\{2,3\}$}
		\end{subfigure}%
	\begin{subfigure}{.33\textwidth}
			\centering
 			\includegraphics[scale=0.28]{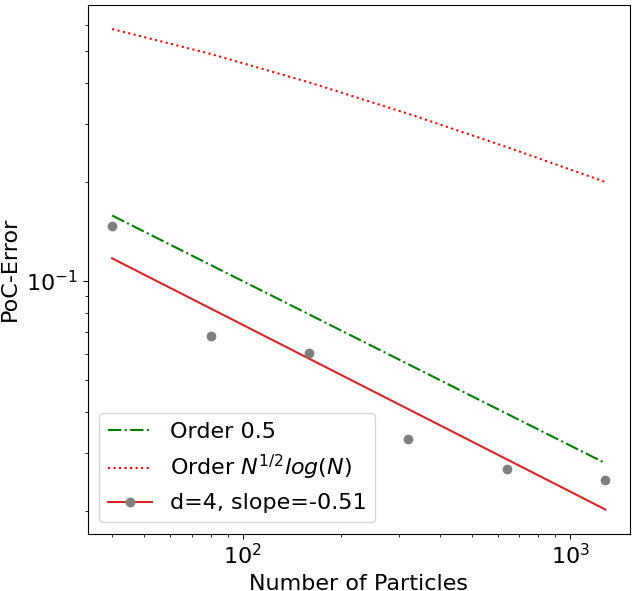}
 			\caption{PoC rates for $d=4$}
		\end{subfigure}%
	\begin{subfigure}{.33\textwidth}
			\centering
 			\includegraphics[scale=0.28]{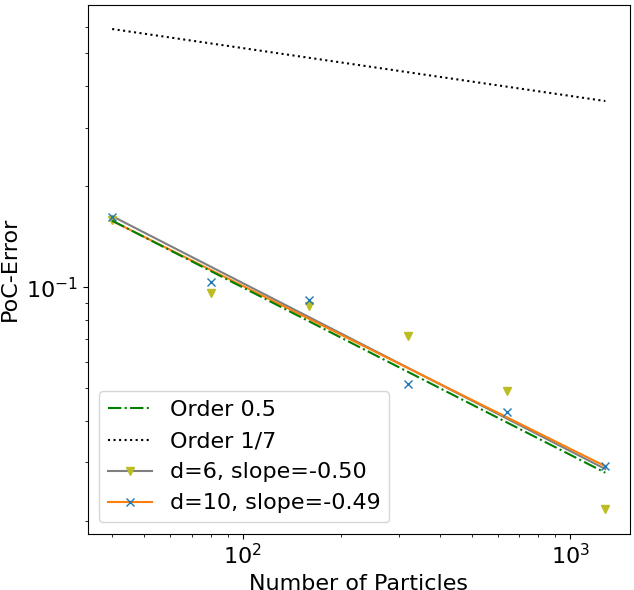}
 			\caption{PoC rates for $d\in\{6,10\}$}
		\end{subfigure}%
    \caption{Estimation of PoC rate for equation \eqref{Eq:General MVSDE}-\eqref{Eq:General MVSDE shape of v} under \eqref{eq:example:poc} using SSM \eqref{eq:SSTM:scheme 0}-\eqref{eq:SSTM:scheme 2} with fixed stepsize $h=10^{-3}$, $T=1$ and number of particles $N\in \{40,80,160,320,640,1280,2560\}$. In all figures the reference rate $0.5$ and the upper bound rate from Theorem \ref{theorem:Propagation of Chaos} are displayed.
 } 
    \label{fig:4:PoC example}
\end{figure}

These findings are inline with those obtained in the one-dimensional example of \cite[Example 4.1]{reisinger2020adaptive}. 
Theorem \ref{theorem:Propagation of Chaos} establishes a strong convergence rate (in terms of number of particles in a pathwise sense) of order $1/4$ for dimensions $d < 4$ only and these results are smaller than the upper bounds of PoC in Theorem \ref{theorem:Propagation of Chaos} -- this highlights a gap in the literature to be explored in future research. 
For perspective, at a theoretical level the rate $1/2$ in $N$ is not new under stronger assumptions. This was obtained in \cite[Lemma 5.1]{delarue2018masterCLT} or  \cite{tse2022} when the drift and diffusion coefficients are assumed to satisfy strong regularity assumptions. Also in \cite{Meleard1996} for linear type MV-SDEs featuring diffusions $\bR^d \ni x\mapsto \hs(x)$ and drifts with structure of the type $\bR^d \ni x\mapsto\int_{\bR^d} b(x,y)\mu(\dd y)$, and requiring that $b,\hs$ are uniformly Lipschitz, the convergence rate $1/2$ in the number of particles is obtained; also in \cite{delarue2021uniform}. 

\subsection{Discussion}

We discuss the advantages of the SSM compared with the taming methods. The SSM converges under all cases, while the two types of taming failed to converge in some cases. The  SSM requires an implicit solver for the convolution kernel but the running time of the SSM compared to the taming methods is only 2 to 3 times longer. From the numerical examples, we see that:
\begin{enumerate}
    \item The two types of strong errors of the SSM are of order 0.5 and consistently outperform that of the proposed taming schemes. In fact, the taming methods are not even expected to converge, however, under a mild initial condition, it is hard to observe the divergence. In the tests with high variance initial distributions, the taming methods diverge while SSM converges consistently. The SSM preserves convergence for larger time steps $h$ (via comparative lower errors) and is also suitable for long-time simulation. 
    
    \item The SSM preserves important geometric properties (the concentration speed of the particles is fast, the expected trajectory coincides with the vector field result), while the taming methods appear to fail to capture these crucial properties. 

   \item We applied the SSM to examples, where the diffusion also involves certain nonlinear measure terms. As long as a suitable monotonicity condition is satisfied the SSM yields promising results.
    
    \item We perform a PoC rate test across dimensions with non-trivial convolution kernel. The rate which we observe numerically is better than the one suggested by the PoC results.
\end{enumerate}

%

%
%
%

\section{Proof of the main results}
\label{section: proof of the main results}

\subsection{Proof of Theorem \ref{Thm:MV Monotone Existence} : Wellposedness and moment stability}
\label{subsection: proof of well-pose and momentbound}

\begin{proof}[Proof of Theorem \ref{Thm:MV Monotone Existence}]
 
The existence and uniqueness follow from modifications of the methodologies used in \cite[Theorem 3.5]{adams2020large}.

\textit{Wellposedness.} 
The proof for existence and uniqueness follows along the same lines as the arguments presented in \cite[Theorem 3.5]{adams2020large}. We repeat here the main steps for convenience. As opposed to more classical approaches, the fixed point argument is carried out over a suitable function space, see \cite{FUNCTIONITER}, instead of a measure space. 

To be precise, one considers the function space $\Lambda_{[0,T],q}$, for $q$ as in Assumption \ref{Ass:Monotone Assumption}, defined as the space of continuous functions $\boldsymbol{g}:[0,T]\times \mathbb{R}^d \to  \mathbb{R}^d \times \mathbb{R}^{d \times l}$, $\boldsymbol{g}(t,x) = (g_1(t,x),g_2(t,x))$ with $g_1:[0,T] \times \mathbb{R}^d \to \mathbb{R}^d$ and $g_2:[0,T] \times \mathbb{R}^d \to \mathbb{R}^{d \times l}$, satisfying 
\begin{align*}
    \| \boldsymbol{g} \|_{[0,T],q}:= \sup_{t \in [0,T]} \bigg( \sup_{x \in \mathbb{R}^d} \frac{|\boldsymbol{g}(t,x) |}{ 1+ | x|^{q+1}} \bigg) < \infty,
\end{align*}
and there exists a constant $L_1  \geq 0$ such that (with $m$ as in Assumption \ref{Ass:Monotone Assumption})
\begin{align}\label{eq:proof_monot}
    \left \langle  x-y,g_1(t,x)-g_1(t,y) \right \rangle + 2(m-1)|g_2(t,x)-g_2(t,y)|^2 
    &\leq L_1 | x-y |^2, 
\end{align}
for all $t \in [0,T]$, $x,y \in \mathbb{R}^d$. In particular, this implies that there exists a constant $L_2 \geq 0$ such that
\begin{equation*}
   \left \langle  x,g_1(t,x) \right \rangle + 2(m-1)|g_2(t,x)|^2 \leq L_2(1 + |\boldsymbol{g}(t,0)|^2 + |x|^2). 
\end{equation*}

For some $K>0$ (chosen below), and a small enough terminal time $T_0$, we define now
\begin{equation*}
    E := \lbrace \boldsymbol{g} \in \Lambda_{[0,T_0],q}: \| \boldsymbol{g} \|_{[0,T_0],q} \leq K \rbrace.
\end{equation*}
We claim that there exist choices for $T_0$ and $K$ such that $\Gamma: E \to E$ defined by
\begin{equation*}
    \Gamma[\boldsymbol{g}](t,x) = (\Gamma[\boldsymbol{g}]_1(t,x),\Gamma[\boldsymbol{g}]_2(t,x) ) := (f \ast \mu_t^{\boldsymbol{g}}(x),f_{\sigma} \ast \mu_t^{\boldsymbol{g}}(x)),
\end{equation*} 
forms a contraction.
Here, $\mu^{\boldsymbol{g}}$ is the law of the solution to the MV-SDE
\begin{align}\label{eq:mv_fixed}
\dd X^{\boldsymbol{g}}_{t} 
&= \big( v^{\boldsymbol{g}}(t,X^{\boldsymbol{g}}_{t},\mu^{\boldsymbol{g}}_t) + b(t,X^{\boldsymbol{g}}_t, \mu^{\boldsymbol{g}}_t) \big)\dd t + \sigma^{\boldsymbol{g}}(t,X^{\boldsymbol{g}}_{t}, \mu_{t}^{\boldsymbol{g}})\dd W_{t},  
\\
\nonumber 
 v^{\boldsymbol{g}}(t,x,\mu) &= g_1(t,x)  +u(x,\mu), \quad \sigma^{\boldsymbol{g}}(t,x,\mu)  = \sigma(t,x,\mu) + g_2(t,x), \quad X^{\boldsymbol{g}}_{0} = X_0 \in L^m( \bR^{d}) .
\end{align}
 {\color{black}The existence of a unique strong solution to $ (X^{\boldsymbol{g}}_{t})_{t \in [0,T]}$ satisfying $\sup_{t \in [0,T]} \mathbb{E}[|X_t^{g}|^m] \leq C$ for some constant $C>0$, is shown in \cite{biswas2020well,kumar2020wellposedness}.} 

We first show that there exist $0<T_0<T$ and $K$ such that $\Gamma$ indeed maps $E$ onto itself. Let $\boldsymbol{g} \in E$. First, we observe that for all $x,y \in \mathbb{R}^d$, $t \in [0,T_0]$
\begin{align*}
   & \left \langle x-y,\Gamma[\boldsymbol{g}]_1(t,x)-\Gamma[\boldsymbol{g}]_1(t,y) \right \rangle + 2(m-1)|\Gamma[\boldsymbol{g}]_2(t,x)-\Gamma[\boldsymbol{g}]_2(t,y)|^2 \\
   & \leq \int_{\mathbb{R}^d} \left( \left \langle x-y,f(x-u)-f(y-u) \right \rangle + 2(m-1)|f_{\sigma}(x-u)-f_{\sigma}(y-u)|^2 \right)\mu_t^{\boldsymbol{g}}(\mathrm{d}u) 
   \\
   & \leq L_1|x-y|^2.
\end{align*} 
Further, we derive, using that $(X^{\boldsymbol{g}}_t)_{t \in [0,T]}$ has finite moments of order $m > 2(q+1)$ that there exist constants $C>0$ and $C(q,\mathbb{E}[|X_0|^{q+1}])>0$ (depending on the moment bounds of the initial data, $q$, and the model parameters) such that 
\begin{align}
   \label{eq: moment bound of gamma g}
    \| &\Gamma[\boldsymbol{g}] \|_{[0,T_0],q} 
    \leq \sup_{t \in [0,T_0]} 
    \bigg( \sup_{x \in \mathbb{R}^d}  \frac{|(f \ast \mu_t^{g})(x)| + |(f_{\sigma} \ast \mu_t^{g})(x)|}{1+|x|^{q+1}} 
    \bigg) 
    \leq C \Big(
    1+\sup_{t\in [0,T_0]}
    \bE \big[ | X^{\boldsymbol{g}}_t |^{q+1}\big]
     \Big) 
    \\
    \nonumber
    & 
      \leq C +
    Ce^{CT_0} \bigg( \mathbb{E}[|X_0|^{q+1}]
    \\
    \nonumber
    & ~ +
    \int_{0}^{T_0} ~
    \big( |b(s,0,\delta_0)|^{q+1}
    +
    |\boldsymbol{g}_1(s,0)|^{q+1}
    + |\boldsymbol{g}_2(s,0)|^{q+1}
        + |u(0,\delta_0)|^{q+1}
        + |\sigma(s,0,\delta_0)|^{q+1} \big)\dd s
    \bigg)
    \\ \nonumber 
    \nonumber
    &
      \leq
     C +
    Ce^{CT_0} \bigg( \mathbb{E}[|X_0|^{q+1}]
    +
    T_0 \| \bdg  \|_{[0,T_0],q}^{q+1}
     \\
    \nonumber
    & ~ +
    \int_{0}^{T_0} ~
    \big( |b(s,0,\delta_0)|^{q+1}
        + |u(0,\delta_0)|^{q+1}
        + |\sigma(s,0,\delta_0)|^{q+1} \big)\dd s
    \bigg)
    \\
    \nonumber
    &
      \leq
     C +
    Ce^{CT_0} \bigg( \mathbb{E}[|X_0|^{q+1}]
    +
    T_0 K^{q+1}
     \\
    \nonumber
    & ~ +
    \int_{0}^{T_0} ~
    \big( |b(s,0,\delta_0)|^{q+1}
        + |u(0,\delta_0)|^{q+1}
        + |\sigma(s,0,\delta_0)|^{q+1} \big)\dd s
    \bigg) \leq K,
\end{align}
for a sufficiently small $T_0>0$ and the choice $K=2C(1+e^{CT}\mathbb{E}[|X_0|^{q+1}])$. 
It remains to show that the map $\Gamma: E \to E$ forms a contraction, i.e., for any $\boldsymbol{g}_1 =(g_{1,1},g_{1,2}),\boldsymbol{g}_2 = (g_{2,1}, g_{2,2}) \in E$, we have
\begin{equation*}
    \| \Gamma[\boldsymbol{g}_1] - \Gamma[\boldsymbol{g}_2] \|_{{[0,T_0],q}} \leq c \|\boldsymbol{g}_1 -\boldsymbol{g}_2 \|_{[0,T_0],q},
\end{equation*}
for $c \in (0,1)$ and a $T_0$ possibly even smaller than chosen above. 
\color{black}
An application of It\^o's formula shows for $t \in [0,T_0]$  
\begin{align*}
    &\mathbb{E}[|X^{\boldsymbol{g}_1}_t - X^{\boldsymbol{g}_2}_t|^2]
    \leq 
    \bE[| X^{\boldsymbol{g}_1}_0 - X^{\boldsymbol{g}_2}_0|^2]
    +
    \int_0^t 
   \bE \Big[    \big| 
     \sigma^\bdg( s, X^{\bdg_1}_s, \mu_s^{\bdg_1}      )
   -
   \sigma^\bdg( s, X^{\bdg_2}_s, \mu_s^{\bdg_2}      )  
     \big|^2  \Big] \dd s
   \\
   &+ 
    2\int_0^t 
   \bE \Big[    \langle X^{\bdg_1}_s - X^{\bdg_2}_s,
     b( s, X^{\bdg_1}_s, \mu_s^{\bdg_1}      )
   -
   b( s, X^{\bdg_2}_s, \mu_s^{\bdg_2}      )  
   \rangle\Big] \dd s 
   \\
   &+ 
    2\int_0^t 
   \bE \Big[    \langle X^{\bdg_1}_s - X^{\bdg_2}_s,
   v^\bdg( s, X^{\bdg_1}_s, \mu_s^{\bdg_1}      )
   -
   v^\bdg( s, X^{\bdg_2}_s, \mu_s^{\bdg_2}      )
   \rangle\Big] \dd s 
     \\
     &\leq 
         \bE[| X^{\boldsymbol{g}_1}_0 - X^{\boldsymbol{g}_2}_0|^2]
    +  \int_0^t C
   \bE \Big[    \big|   X^{\bdg_1}_s - X^{\bdg_2}_s\big|^2       \Big]
   +
   2
   \bE \Big[     \big|        
     g_{1,2}( s, X^{\bdg_1}_s      )
   - g_{1,2}( s, X^{\bdg_2}_s      )  \big|^2
    \Big] \dd s
   \\
   & ~  + 2 \int_0^t \bE \Big[    \langle X^{\bdg_1}_s - X^{\bdg_2}_s,
     g_{1,1}( s, X^{\bdg_1}_s      )
   - g_{1,1}( s, X^{\bdg_2}_s      )  
   \rangle\Big]  \dd s
   \\
   & ~ +2\int_0^t 
   \bE \Big[    \langle X^{\bdg_1}_s - X^{\bdg_2}_s,
       g_{1,1}( s, X^{\bdg_2}_s      )
   -
   g_{2,1}( s, X^{\bdg_2}_s       )  
   \rangle\Big]    \dd s \\
    &~+ 2  \int_0^t
   \bE \Big[     \big|  
   g_{1,2}( s, X^{\bdg_2}_s      )
   -
   g_{2,2}( s, X^{\bdg_2}_s       )  \big|^2
    \Big] \dd s
    \\
    &\leq   
    \bE[| X^{\boldsymbol{g}_1}_0 - X^{\boldsymbol{g}_2}_0|^2]
    + C \int_0^t 
   \bE \Big[    \big|   X^{\bdg_1}_s - X^{\bdg_2}_s\big|^2       \Big] \dd s
   \\
    &~
    +4\int_0^t 
   \bE \Big[     \big|g_{1,1}( s, X^{\bdg_2}_s      )
   -
   g_{2,1}( s, X^{\bdg_2}_s       )  
   \big|^2
   + 
    \big|g_{1,2}( s, X^{\bdg_2}_s      )
   -
   g_{2,2}( s, X^{\bdg_2}_s       )  
   \big|^2
   \Big] \dd s  
   \\
   &\leq  
    \bE[| X^{\boldsymbol{g}_1}_0 - X^{\boldsymbol{g}_2}_0|^2]
    + C \int_0^t 
   \bE \Big[    \big|   X^{\bdg_1}_s - X^{\bdg_2}_s\big|^2       \Big] \dd s
    \\
    &~
    +C\int_0^t \|\boldsymbol{g}_1 -\boldsymbol{g}_2 \|_{[0,T_0],q}^2
   \bE [   1 + |X^{\boldsymbol{g}_2}_s|^{2q+2}
   ] \dd s,
\end{align*}
where we used Young's inequality in the last display. By Gronwall's Lemma, we have 
\begin{align*}
     \sup_{t\in[0,T_0]}\mathbb{E}[&|X^{\boldsymbol{g}_1}_t - X^{\boldsymbol{g}_2}_t|^2]
    \leq C T_0 e^{CT_0}  \sup_{t\in[0,T_0]} \bE [   1 + |X^{\boldsymbol{g}_2}_t|^{2q+2}
   ] \|\boldsymbol{g}_1 -\boldsymbol{g}_2 \|_{[0,T_0],q}^2.
\end{align*}
\color{black}
From the result above,  we have 
\begin{align*}
      & \| \Gamma[\boldsymbol{g}_1] - \Gamma[\boldsymbol{g}_2] \|_{{[0,T_0],q}} 
      \\
      &
       \leq \sup_{t \in [0,T_0]} \bigg( \sup_{x \in \mathbb{R}^d}  \frac{|(f \ast \mu_t^{\boldsymbol{g}_1})(x) - (f \ast \mu_t^{\boldsymbol{g}_2})(x)| 
      + 
      |(f_{\sigma} \ast \mu_t^{\boldsymbol{g}_1})(x) - (f_{\sigma} \ast \mu_t^{\boldsymbol{g}_2})(x)|}{1+|x|^{q+1}} \bigg) \\
      &\leq C \sup_{t \in [0,T_0]} 
      \bigg(
      \sup_{x \in \mathbb{R}^d} \frac{\mathbb{E}\left[|X^{\boldsymbol{g}_1}_t - X^{\boldsymbol{g}_2}_t|(1+|x|^{q+1}) \big( 1 + |X^{\boldsymbol{g}_1}_t|^q + |X^{\boldsymbol{g}_2}_t|^q \big) \right]}{1+|x|^{q+1}} \bigg)
      \\
      & \leq C   \sup_{t \in [0, T_0]}\mathbb{E}\left[|X^{\boldsymbol{g}_1}_t - X^{\boldsymbol{g}_2}_t| \big( 1 + |X^{\boldsymbol{g}_1}_t|^q + |X^{\boldsymbol{g}_2}_t|^q \big) \right] 
      \\
      & \leq C \Big(  \sup_{t \in [0, T_0]}\mathbb{E}[|X^{\boldsymbol{g}_1}_t - X^{\boldsymbol{g}_2}_t|^2] \Big)^{1/2} 
      \Big(  \sup_{t \in [0, T_0]} \mathbb{E} \left[ \left(1 + |X^{\boldsymbol{g}_1}_t|^q + |X^{\boldsymbol{g}_2}_t|^q \right)^2 \right] \Big)^{1/2} 
      \\
      & \leq C  \Big( e^{C T_0} \sqrt{T_0} \Big)\Big(\sup_{t\in[0,T_0]} \bE [   1 + |X^{\boldsymbol{g}_2}_t|^{2q+2}]
      \Big)^{1/2} 
      \\
      &\quad ~\times 
      \Big(  \sup_{t \in [0, T_0]} \mathbb{E} \left[ \left(1 + |X^{\boldsymbol{g}_1}_t|^q + |X^{\boldsymbol{g}_2}_t|^q \right)^2 \right] \Big)^{1/2} \|\boldsymbol{g}_1 -\boldsymbol{g}_2 \|_{[0,T_0],q}
      \\
      & \leq  C \Big( e^{C T_0} \sqrt{T_0} \Big) ~ \Big( 1+  \sup_{t \in [0, T_0]}
       \mathbb{E}  [    |X^{\boldsymbol{g}_1}_t|^{2q+2}    ] + \sup_{t \in [0, T_0]}
       \mathbb{E}  [     |X^{\boldsymbol{g}_2}_t|^{2q+2}   ]
       \Big) 
       \|  \boldsymbol{g}_1 - \boldsymbol{g}_2 \|_{{[0,T_0],q}},
\end{align*}
where we used Young's inequality in the last estimate. 
Performing similar calculations as above for the moments of $X^{\boldsymbol{g}_1}_t$ and $X^{\boldsymbol{g}_2}_t$, which by assumption exist up to order $m > 2q+2$, allows to deduce that $T_0$ can indeed be chosen small enough such that $\Gamma$ maps $E$ onto $E$ and is a contraction operator. We conclude that the sequence $(\boldsymbol{g}^{n})_{n \geq 0}$ defined by $\boldsymbol{g}^{n+1} =  \Gamma [\boldsymbol{g}^{n}]$, for $\boldsymbol{g}^{0} \in E$, is a Cauchy sequence belonging to $E$ and converges with respect to the $\| \cdot \|_{[0,T_0],q}$-norm to $\boldsymbol{g} = \Gamma [\boldsymbol{g}]$ satisfying \eqref{eq:proof_monot}. Thus, for all $t \in [0,T_0]$, we have $$ 
\boldsymbol{g}(t,X_t^{\boldsymbol{g}}) = \Big(f \ast \mu_t^{\boldsymbol{g}}(X_t^{\boldsymbol{g}}),f_{\sigma} \ast \mu_t^{\boldsymbol{g}}(X_t^{\boldsymbol{g}})\Big).
$$
 Substituting this into \eqref{eq:mv_fixed}, yields \eqref{Eq:General MVSDE} and thus $(X_t)_{t \in [0,T_0]}$ with $\sup_{t \in [0,T_0]} \mathbb{E}\left[ |X_t|^{m} \right] < \infty$.

\color{black}
Our challenge now is to find a solution over the whole interval $[0, T ]$. From the above analysis, we observe that the implied constants (and therefore the choice of $T_0$) depend on the moments
of $X_0$. Therefore, we are not immediately able to deduce the existence of a solution on $[0,T]$. We need to ensure that these constants do not explode.

Below, we show pointwise $p$-th moment estimates for $m\geq p> 2$ (the case $p=2$ follows in a straightforward manner from the below arguments where one would use Lemma \ref{AppendixLemma-aux1} and Lemma \ref{AppendixLemma-aux2} instead of the additional symmetry property -- we discuss this in more detail in Section \ref{section: proof of ergodicity} as we prove Theorem \ref{Thm:ergodicity}).
From It\^o's formula, Assumption \ref{Ass:Monotone Assumption} and  \eqref{eq: remark eq1}-\eqref{eq: remark eq7} in Remark \ref{remark:ImpliedProperties}, for all $t \in [0,T_0]$, we deduce
\begin{align}
\nonumber 
    |X_{t}&|^p
    \leq 
      |X_{0} |^p
    +  p \int_0^t |X_{s} |^{p-2}  \langle   X_{s} , v(X_s, \mu^{X}_{s} )  \rangle \dd s
        +
      p \int_0^t |X_{s} |^{p-2} \langle   X_{s} ,  \hs(s,X_s , \mu^{X}_{s} )\dd W_s  \rangle 
    \\\label{eq: ito for X,p}
    &
    +
     p  \int_0^t  |X_{s} |^{p-2} \langle  X_{s},  b(s,X_s, \mu^{X}_{s} )  \rangle \dd s
     \\\nonumber 
    &
    +
     p(p-1)   \int_0^t |X_{s} |^{p-2}  \Big(|\sigma(s,X_s , \mu^{X}_{s} )|^2 +\int_{\bR^d} |\fs( X_s-y)|^2  \mu^{X}_{s}(\mathrm{d}y) \Big) \dd s
    \\ \nonumber 
    \le
    &
     |X_{0} |^p
    +  C \int_0^t \Big(1+ |X_{s} |^{p}  + \big(W^{(2)}(\mu^{X}_{s},\delta_0)\big)^p  \Big)  \dd s
        +
      p \int_0^t |X_{s} |^{p-2} \langle   X_{s} ,  \hs(s,X_s , \mu^{X}_{s} )\dd W_s  \rangle
     \\
    &
    +
     p  \int_0^t  |X_{s} |^{p-2} \Big( \langle  X_{s},  \int_{\bR^d}
     f(X_{s}-y) \mu^{X}_{s}(\dd y)\rangle
     +(p-1)
     \int_{\bR^d} |\fs( X_s-y)|^2  \mu^{X}_{s}(\mathrm{d}y) \Big)
      \dd s.\nonumber
\end{align}
Taking expectation on both sides, using Assumption \ref{Ass:Monotone Assumption}, in particular $(\mathbf{A}^f,~\mathbf{A}^{\fs})$, and \eqref{eq: remark eq1} in Remark \ref{remark:ImpliedProperties}, we derive
\begin{align*}
\nonumber
    \bE   & \big[    |X_{t}|^p    \big]
     \le 
     \bE \big[       |X_{0} |^p  \big] 
    +  
     C \int_0^t (1 + \bE \big[    |X_{s}|^p    \big]) \dd s
     \\
    & \qquad+
    \frac{p}{4}\int_0^t    \int_{\bR^d}  \int_{\bR^d}  (|x|^{p-2} - |y|^{p-2}  )  \langle x+ y
    , f(x-y)  \rangle
    \mu^{X}_{s}(\dd x) \mu^{X}_{s}(\dd y) \dd s
    \\
    & \qquad
    +   \frac{p}{2} \int_0^t    \int_{\bR^d}  \int_{\bR^d} |x|^{p-2}\big( 
        \langle  x-y
    , f(x-y)  \rangle
    +  2(p-1)|\fs( x-y)|^2
    \big)
    \mu^{X}_{s}(\dd x) \mu^{X}_{s}(\dd y) \dd s 
    \\
    \le 
     & 
     \bE \big[       |X_{0} |^p  \big] 
    +  
     C \int_0^t  \bE \big[    |X_{s}|^p    \big] \dd s   +C t.
\end{align*}
Gronwall's lemma yields the pointwise moment estimate, 
    \begin{align}
    \label{eq: moment bound for X}
    \sup_{t\in[0,T_0]} \mathbb E \big[  |X_{t}|^{\widetilde m} \big] 
	&\leq C \left(1+ \bE[|X_0|^{\widetilde m}]\right) e^{C T_0},\qquad \textrm{for any }~ \widetilde m \in [2,m].
\end{align}

Since, we have established a-priori $L^{p}$-moment bounds, for $p \in [2,m]$, (which substitutes for \cite[Proposition 3.13]{adams2020large}), we can repeat the arguments from above to establish the existence of a solution to an arbitrary time interval $[0,T]$. To be more precise, we first show that we can choose constants $K_1,T_1>0$ (independent of $T_0$) such that for $T_0+T_1 \in[0,T]$ we have $ \|\Gamma[\boldsymbol{g}]\|_{[T_0,T_0+T_1],q} \leq K_1$, for $\|\bdg\|_{[T_0,T_0+T_1],q} \leq K_1$. From Equation \ref{eq: moment bound of gamma g}, we get 
\begin{align*}
     \| \Gamma&[\boldsymbol{g}] \|_{[ T_0,T_0+T_1],q} 
    \\     
    &
      \leq C +C
    e^{CT_1} \bigg( \sup_{t\in[0,T_0]}\mathbb{E}[|X_t|^{q+1}]
    +
    T_1 \| \bdg  \|_{[T_0,T_0+T_1],q}^{q+1}
    \\     
    &\qquad \qquad +
    \int_{T_0}^{T_0+T_1} ~
    \Big( |b(s,0,\delta_0)|^{q+1}
        + |u(0,\delta_0)|^{q+1}
        + |\sigma(s,0,\delta_0)|^{q+1} \Big)\dd s
    \bigg)
    \\
      &\leq C +C
    e^{CT_1} \bigg( \sup_{t\in[0,T_0]}\mathbb{E}[|X_t|^{q+1}]
    +
    T_1 K_1^{q+1}
    \\     
    &\qquad \qquad +
    \int_{T_0}^{T_0+T_1} ~
    \Big( |b(s,0,\delta_0)|^{q+1}
        + |u(0,\delta_0)|^{q+1}
        + |\sigma(s,0,\delta_0)|^{q+1} \Big)\dd s
    \bigg) \\
      &\leq C +C
    e^{CT_1} \bigg(  e^{CT_0}
    \Big(1+ \mathbb{E}[|X_0|^{q+1}]\Big)
    +
    T_1 K_1^{q+1}
    \\     
    &\qquad \qquad +
    \int_{T_0}^{T_0+T_1} ~
    \Big( |b(s,0,\delta_0)|^{q+1}
        + |u(0,\delta_0)|^{q+1}
        + |\sigma(s,0,\delta_0)|^{q+1} \Big)\dd s
    \bigg),
\end{align*}
where we used \eqref{eq: moment bound for X}  in the last inequality.

Let now $K_1=2C(1+e^{CT}+e^{CT} \mathbb{E}[|X_0|^{q+1}])$.  Then, we choose $T_1>0$ (independent of $T_0$) small enough such that for any $\| \boldsymbol{g} \|_{[T_0,T_0+T_1],q} \leq K_1$,  we  have  $\|\Gamma[\boldsymbol{g}]\|_{[T_0,T_0+T_1],q}\leq K_1$. Similary as above, we can show that the map $\Gamma: E_1 \to E_1$, where $$
 E_1 := \lbrace \boldsymbol{g} \in \Lambda_{[T_0,T_0 +T_1],q}: \| \boldsymbol{g} \|_{[T_0,T_0 +T_1],q} \leq K_1 \rbrace,
$$
forms a contraction (eventually choosing $T_1$ even smaller as above). The argument from above (choosing $K_2$ etc. as $K_1$) can be repeated to establish the existence of a solution on the time interval $[0,T]$.
\end{proof}

\subsection{Proof of Theorem \ref{theorem:Propagation of Chaos}: Propagation of chaos}
\label{section: proof of poc}

\begin{proof}

Due to Lemma \ref{remark:OSL for the whole function / system V} and conditions $(\mathbf{A}^u,~\mathbf{A}^\sigma, \mathbf{A}^f,~\mathbf{A}^{\fs} )$, \color{black} we observe that the drift and diffusion of the interacting particle system (viewed as an SDE in $\mathbb{R}^{Nd}$ ) satisfy a monotonicity condition as in \cite[Section 2]{mao2008stochastic} which allow us to deduce that the interacting particle system has a unique strong solution. 
Critically, the wellposedness result therein does not yield moment estimates that are independent of $N$, as we interpreted the particle system as one single SDE in $\bR^{Nd}$. In the next step, we prove moment bounds independent of $N$. By It\^o's formula, Assumption \ref{Ass:Monotone Assumption}, \eqref{eq: remark eq1}-\eqref{eq: remark eq7} in Remark \ref{remark:ImpliedProperties}  and Jensen's inequality, we have, for all $t\in[0,T],~i\in \llbracket 1,N \rrbracket$, $2\leq p\leq m$, 
\allowdisplaybreaks
\begin{align*}
   &\bE \big[       |X_{t}^{i,N}|^p \big]
     \leq 
    \bE \big[      |X_{0}^{i,N}|^p  \big] 
    +  p  \bE \Big[ \int_0^t  |X_{s}^{i,N}|^{p-2} \langle      X_{s}^{i,N}, v(X_s^{i,N}, \mu^{X,N}_{s} ) 
    + b(s,X_s^{i,N}, \mu^{X,N}_{s} )
    \rangle   \Big]  \dd s
     \\
    &
   \quad +
     p\bE \Big[  \int_0^t |X_{s}^{i,N}|^{p-2}  
     \Big( \langle   X_{s}^{i,N},  \hs(s,X_s^{i,N}, \mu^{X,N}_{s} )\dd W_s^i \rangle     
     +
     \tfrac{ (p-1)}{2} 
      |\hs(s,X_s^{i,N}, \mu^{X,N}_{s} )|^2 \dd s \Big) \Big] 
    \\
    &
    \le  
    \bE \big[      |X_{0}^{i,N}|^p  \big] 
    + C
   \int_0^t    \bE \big[     |X_{s}^{i,N}|^p  \big]  \dd s
   +CT 
   \\
    &
   \quad +
     p   \int_0^t  \bE \bigg[|X_{s}^{i,N}|^{p-2} \Big(
     \langle      X_{s}^{i,N}, v(X_s^{i,N}, \mu^{X,N}_{s} )  \rangle   
     + (p-1) |\sigma(s,X_s^{i,N}, \mu^{X,N}_{s} )|^2  
   \\
    &
   \quad \quad \quad \quad
   +     
    \langle      X_{s}^{i,N}, \frac1N\sum_{j=1}^N f(X_s^{i,N}-X_s^{j,N} )  \rangle   
   +
   (p-1)  \frac1N\sum_{j=1}^N \big| \fs(X_s^{i,N}-X_s^{j,N} )  \big|^2
   \Big)\bigg]  \dd s 
   \\
   \color{black}
   &
   \leq
     \bE \big[      |X_{0}^{i,N}|^p  \big]   
   + 
   CT+C \int_0^t    \bE \big[    |X_{s}^{i,N}|^p  \big]  \dd s
    \\
    &
 \quad  + 
        \frac{p}{2N}\sum_{j=1}^N
   \int_0^t\bE \Big[ 
   |X_{s}^{i,N}|^{p-2}  \langle      X_{s}^{i,N}-X_s^{j,N},  f(X_s^{i,N}-X_s^{j,N} )  \rangle   
   \Big]  \dd s
   \\
    &
 \quad + 
        \frac {p(p-1)}{N}\sum_{j=1}^N
   \int_0^t\bE \Big[ 
   |X_{s}^{i,N}|^{p-2}  ~
    \big| \fs(X_s^{i,N}-X_s^{j,N} )  \big|^2
    \Big]  \dd s
   \\
    &
  \quad +
       \frac{p}{4N} \sum_{j=1}^N
   \int_0^t \bE \Big[ 
   (|X_{s}^{i,N}|^{p-2}- |X_{s}^{j,N}|^{p-2})  
   \langle      X_{s}^{i,N}+X_s^{j,N},  f(X_s^{i,N}-X_s^{j,N} )  \rangle   
   \Big]  \dd s
   \\
   &
   \le 
     \bE \big[      |X_{0}^{i,N}|^p  \big] 
    + 
   C\int_0^T     \bE \big[      |X_{s}^{i,N}|^p  \big]  \dd s
   +C T.
\end{align*}
\color{black}
In the last estimate, we used the following chain of equalities
\begin{align*}
&\sum_{j=1}^N\bE 
    \Big[ 
   |X_{s}^{i,N}|^{p-2}  \langle      X_{s}^{i,N},  f(X_s^{i,N}-X_s^{j,N} )  \rangle   
   \Big]
   \\
   & = 
    \frac12 \sum_{j=1}^N\bE 
    \Big[ 
    \langle   |X_{s}^{i,N}|^{p-2}    X_{s}^{i,N}-  |X_{s}^{j,N}|^{p-2} X_s^{j,N},  f(X_s^{i,N}-X_s^{j,N} )  \rangle   
   \Big]
   \\
   &=  \frac12 \sum_{j=1}^N\bE 
    \Big[ 
    \langle   |X_{s}^{i,N}|^{p-2}    X_{s}^{i,N}
    -|X_{s}^{i,N}|^{p-2}    X_{s}^{j,N}
    + |X_{s}^{i,N}|^{p-2}    X_{s}^{j,N}
    \\
   &\qquad\qquad \qquad 
    -  |X_{s}^{j,N}|^{p-2} X_s^{j,N},  f(X_s^{i,N}-X_s^{j,N} )  \rangle   
   \Big]
   \\
   &= \frac12 \sum_{j=1}^N\bE 
    \Big[ 
      |X_{s}^{i,N}|^{p-2}   \langle  X_{s}^{i,N}-    X_s^{j,N},  f(X_s^{i,N}-X_s^{j,N} )  \rangle   
   \Big]
   \\
   &\quad +
   \frac{1}{4} 
    \sum_{j=1}^N\bE 
    \Big[ 
    \big \langle    
    \big( |X_{s}^{i,N}|^{p-2}    X_{s}^{j,N}
    -  |X_{s}^{j,N}|^{p-2} X_s^{j,N} \big) 
    \\
   &\qquad\qquad \qquad 
    -
    \big( |X_{s}^{j,N}|^{p-2}    X_{s}^{i,N}
    -  |X_{s}^{i,N}|^{p-2} X_s^{i,N} \big)
    ,  f(X_s^{i,N}-X_s^{j,N} )  \big\rangle   
   \Big]
   \\
   &= \frac12 \sum_{j=1}^N\bE 
    \Big[ 
      |X_{s}^{i,N}|^{p-2}   \langle  X_{s}^{i,N}-    X_s^{j,N},  f(X_s^{i,N}-X_s^{j,N} )  \rangle   
   \Big]
   \\
   &\quad +
   \frac{1}{4} 
    \sum_{j=1}^N\bE 
    \Big[ (   |X_{s}^{i,N}|^{p-2} - |X_{s}^{j,N}|^{p-2}  )
     \langle    
     X_s^{i,N} +  X_s^{j,N} 
    ,  f(X_s^{i,N}-X_s^{j,N} )  \rangle   
   \Big].
\end{align*}
\color{black} 
Taking supremum over $i$ and $t$, shows the claim using Gronwall's lemma.

\color{black}
The estimate (\ref{eq:propogation of chaos, poc result}) is then a consequence of \cite[Theorem 3.14]{adams2020large}. We provide some key differences here. \color{black}Using  It\^o's formula, we have 
\begin{align} 
    \label{eq: poc diff 0}
     \sum_{i=1}^N & \bE \big[   |X_{t}^{i, N}-X_{t}^{i}|^2   \big]
    =
    2\sum_{i=1}^N\int_0^t     \bE \big[    \langle    X_{s}^{i, N}-X_{s}^{i} , v(X_{s}^{i, N}, \mu^{X,N}_{s} )-v(X_{s}^{i}, \mu^{X}_{s} )  \rangle  \big]\dd s
    \\
    \nonumber 
    & \quad+ \sum_{i=1}^N
     \int_0^t    \bE \big[   2 \langle     X_{s}^{i, N}-X_{s}^{i} , b(s,X_{s}^{i, N}, \mu^{X,N}_{s} )-b(s,X_{s}^{i}, \mu^{X}_{s} )  \rangle 
     \\\nonumber
   &\qquad\qquad \qquad 
   + 
     | \hs(s,X_{s}^{i, N}, \mu^{X,N}_{s} )-\hs(s,X_{s}^{i}, \mu^{X}_{s} )   |^2  \big] \dd s
    \\ 
    \nonumber 
   & \le  
   \int_0^t   \sum_{i=1}^N 2\bE \big[    \langle    X_{s}^{i, N}-X_{s}^{i}, b(s,X_{s}^{i, N}, \mu^{X,N}_{s} )-b(s,X_{s}^{i}, \mu^{X}_{s} )  \rangle  
   \big] \dd s
     \\
     \nonumber 
     & \quad + 
     2 \sum_{i=1}^N
     \int_0^t     \bE \big[   
     \langle    X_{s}^{i, N}-X_{s}^{i}, \frac1N \sum_{j=1}^N f( X_{s}^{i, N}-X_{s}^{j, N} ) 
     -\frac1N \sum_{j=1}^N f( X_{s}^{i}-X_{s}^{j} )\rangle\big]\dd s 
     \\
     \nonumber 
     & \quad + 
     2 \sum_{i=1}^N
     \int_0^t     \bE \big[   
     \langle    X_{s}^{i, N}-X_{s}^{i},  
     \frac1N \sum_{j=1}^N f( X_{s}^{i}-X_{s}^{j} )
     - \int_{\bR^d} f(  X_{s}^{i} -y ) \mu_{s}^X(\mathrm{d}y) 
     \rangle\big]\dd s 
      \\
      \nonumber 
     & \quad + 
     4 \sum_{i=1}^N
     \int_0^t     \bE \big[  \big|  
    \frac{1}{N} \sum_{j=1}^N  \fs( X_{s}^{i, N}-X_{s}^{j, N}) -  \frac{1}{N} \sum_{j=1}^N \fs(X_{s}^{i}-X_{s}^{j} ) \big|^2  
     \big]\dd s
      \\
      \nonumber 
     & \quad + 
     4 \sum_{i=1}^N
     \int_0^t     \bE \big[ \big|
      \frac{1}{N} \sum_{j=1}^N  \fs(X_{s}^{i}-X_{s}^{j} )  -  \int_{\bR^d} \fs(X_{s}^{i}-y ) \mu_{s}^X(\mathrm{d}y)  ~  \big|^2
     \big]\dd s 
           \\
           \nonumber 
    & \quad + 
     2 \sum_{i=1}^N \int_0^t     \bE \Big[  \langle    X_{s}^{i, N}-X_{s}^{i}, u(X_{s}^{i, N}, \mu^{X,N}_{s} )-u(X_{s}^{i}, \mu^{X}_{s} )  \rangle
     \\  \nonumber 
    &\qquad\qquad \qquad \qquad \qquad 
     +| \sigma(s,X_{s}^{i, N}, \mu^{X,N}_{s} )-\sigma(s,X_{s}^{i}, \mu^{X}_{s} )   |^2  \Big] \dd s 
     \\
     \nonumber 
     &\leq 
     C  \sum_{i=1}^N \int_0^t  \left(  \bE \big[   |    X_{s}^{i, N}-X_{s}^{i} |^2 
   \big]  
   +  (W^{(2)}( \mu^{X}_{s}, \mu^{X,N }_{s}))^2 \right)
   \dd s
   \\
   \nonumber 
   & \quad +  \frac{1}{N}\sum_{i=1}^N \sum_{j=1}^N  \int_0^t  \Big(  
     \big\langle    (X_{s}^{i, N}-X_{s}^{i}) -  (X_{s}^{j, N}-X_{s}^{j}) ,  f( X_{s}^{i, N}-X_{s}^{j, N} ) - f( X_{s}^{i}-X_{s}^{j} ) \big\rangle  
    \\  \nonumber 
    &\qquad\qquad \qquad \qquad \qquad  +
    4 |   \fs ( X_{s}^{i, N}-X_{s}^{j, N} ) - \fs ( X_{s}^{i}-X_{s}^{j} )  |^2 
    \Big)
    \dd s
     \\
     \nonumber 
     & \quad + 
     2 \sum_{i=1}^N
     \int_0^t     \bE \big[   
     \langle    X_{s}^{i, N}-X_{s}^{i},  
     \frac1N \sum_{j=1}^N f( X_{s}^{i}-X_{s}^{j} )
     - \int_{\bR^d} f(  X_{s}^{i} -y ) \mu_{s}^X(\mathrm{d}y) 
     \rangle\big]\dd s 
     \\
     \nonumber 
     & \quad + 
     4 \sum_{i=1}^N
     \int_0^t     \bE \big[ \big|
      \frac{1}{N} \sum_{j=1}^N  \fs(X_{s}^{i}-X_{s}^{j} )  -  \int_{\bR^d} \fs(X_{s}^{i}-y ) \mu_{s}^X(\mathrm{d}y)  ~  \big|^2
     \big]\dd s 
     \\
     \nonumber 
     &\leq 
     C  \sum_{i=1}^N \int_0^t   \left( \bE \big[   |    X_{s}^{i, N}-X_{s}^{i} |^2 
   \big]  
   +  (W^{(2)}( \mu^{X}_{s}, \mu^{X,N }_{s}))^2 \right)
   \dd s  + \frac C N \sum_{i=1}^N\int_0^t       
   \Big(    \bE    \big[ \big|    X_{s}^{i, N}-X_{s}^{i}      \big|^2\big]  \Big)^{1/2}
   \\
    \label{eq: poc diff 1}
   & \quad \qquad \qquad \cdot 
   \Big(    \bE    \big[ \big|       \sum_{j=1}^N f( X_{s}^{i}-X_{s}^{j} )
     - \int_{\bR^d} f(  X_{s}^{i} -y ) \mu_{s}^X(\mathrm{d}y)    \big|^2\big]  \Big)^{1/2}
   \dd s 
   \\ \label{eq: poc diff 2}
   & \quad + 
   \frac{C}{N^2}   \sum_{i=1}^N\int_0^t 
    \bE    \big[ \big|       \sum_{j=1}^N \big(  \fs ( X_{s}^{i}-X_{s}^{j} )
     - \int_{\bR^d} \fs (  X_{s}^{i} -y ) \mu_{s}^X(\mathrm{d}y)  \big)  \big|^2\big] 
     ~ \dd s. 
\end{align}

Now, to further estimate the terms \eqref{eq: poc diff 1} and \eqref{eq: poc diff 2}, we use similar arguments as in \cite[Equation (3.25)]{adams2020large}. Regarding  \eqref{eq: poc diff 1}, we have 
\begin{align*}
    \bE  &  \big[ \big|       \sum_{j=1}^N \big(  f( X_{s}^{i}-X_{s}^{j} )
     - \int_{\bR^d} f(  X_{s}^{i} -y ) \mu_{s}^X(\mathrm{d}y)    \big) \big|^2\big] 
     \\
     = & \sum_{j,k=1}^N 
       \bE    \big[ \big\langle     f( X_{s}^{i}-X_{s}^{j} )
     - \int_{\bR^d} f(  X_{s}^{i} -y ) \mu_{s}^X(\mathrm{d}y) 
     , 
      f( X_{s}^{i}-X_{s}^{k} )
     - \int_{\bR^d} f(  X_{s}^{i} -y ) \mu_{s}^X(\mathrm{d}y) 
     \big\rangle\big]. 
\end{align*}
For $i\neq j \neq k$, $X_{s}^{i},X_{s}^{j},X_{s}^{k}$ are independent and identically distributed, and consequently, we have 
\begin{align*}
    \bE[ & \big\langle    f(X_{s}^{i}-X_{s}^{j}  )     -   \int_{\bR^d}  f(X_{s}^{i}-y ) \mu_{s}^X(\mathrm{d}y),         f(X_{s}^{i}-X_{s}^{k}  )    -  \int_{\bR^d} f(X_{s}^{i}-y ) \mu_{s}^X(\mathrm{d}y)         \big\rangle     ] 
    \\
    & =  
    \int_{  {\bR^d} } \int_{  {\bR^d} } \int_{  {\bR^d} } \int_{  {\bR^d} } \int_{  {\bR^d} } \langle           f(x - y_1) - f(x-z_1), 
    \\
    & ~   \qquad  \qquad 
    f(x-y_2)-f(x-z_2)
             \big\rangle      \mu_s^X ( \mathrm{d}x )\mu_s^X ( \mathrm{d}y_1 )\mu_s^X ( \mathrm{d}y_2 ) \mu_s^X ( \mathrm{d}z_1 ) \mu_s^X ( \mathrm{d}z_2 )
    \\
    &= (1-1+1-1)\int_{  {\bR^d} } \int_{  {\bR^d} } \int_{  {\bR^d} }  \langle           f(x - y_1)  , f(x-y_2) 
             \big\rangle      \mu_s^X ( \mathrm{d}x )\mu_s^X ( \mathrm{d}y_1 )\mu_s^X ( \mathrm{d}y_2 ) 
             = 0. 
\end{align*}
Thus, only the cases $j=k$ yield a non-zero contribution. Therefore, we deduce  
\begin{align}
    \nonumber 
    \bE   \big[ \big|       \sum_{j=1}^N \big(  f( X_{s}^{i}
    &
    -X_{s}^{j} )
     - \int_{\bR^d} f(  X_{s}^{i} -y ) \mu_{s}^X(\mathrm{d}y)    \big) \big|^2\big] 
     \\
     \label{eq: poc summation square}
     = & \sum_{j=1}^N 
       \bE    \big[ \big|     f( X_{s}^{i}-X_{s}^{j} )
     - \int_{\bR^d} f(  X_{s}^{i} -y ) \mu_{s}^X(\mathrm{d}y) 
     \big|^2\big]
     \leq CN
     , 
\end{align}
where we additionally used the growth for the function $f$ in Assumption \ref{Ass:Monotone Assumption} along with the stability results in Theorem \ref{Thm:MV Monotone Existence}.
Similar arguments apply for $\fs$ in \eqref{eq: poc diff 2}.
By gathering the inequalities from above, we obtain the following estimate for \eqref{eq: poc diff 0}: 
\begin{align*}
    \sum_{i=1}^N \bE \big[ &   |X_{t}^{i, N}-X_{t}^{i}|^2   \big]
     \leq 
     C  \sum_{i=1}^N \int_0^t   \Big(  \bE \big[   |    X_{s}^{i, N}-X_{s}^{i} |^2 
   \big]  
   +  (W^{(2)}( \mu^{X}_{s}, \mu^{X,N }_{s}))^2 
   + \sqrt{N}
   \Big)
   \dd s.
\end{align*}
The  estimate \eqref{eq:propogation of chaos, poc result} in the theorem's statement follows as in the proof of \cite[Theorem 3.14]{adams2020large}. 
\color{black}
\end{proof}

\color{black}

\subsection{Proof of Theorem \ref{Thm:ergodicity} : Exponential contraction and the ergodic property}
\label{section: proof of ergodicity}

For improved readability, we prove each statement of Theorem \ref{Thm:ergodicity} separately but the proof is articulated as a whole in the sense that notations and arguments used in proving a statement will carry into the proof of the following statement (as to avoid repetitions). 

%


We prove the statements by the order they were stated.

\begin{proof}[Proof of Theorem \ref{Thm:ergodicity}]
\textit{Proof of statement 1.} 
Let the corresponding assumptions hold and let $X_0\sim \mu$, $\mu \in \cP_\ell(\bR^d)$ with  $2q+2 <\ell\leq m  $ be given. { \color{black}Applying similar calculations as in \eqref{eq: ito for X,p} for $e^{-\rho_{1,\ell} t} |X_{t}|^\ell$} (with $\rho_{1,\ell} \neq 0$), we deduce that there exists a constant $C>0$ depending on $\ell,~L^{(1)}_{(bu\sigma)}$ and $\sup_t|b(t,0,\delta_0)|$ such that 
\color{black}  
\begin{align}
    \notag 
    &e^{-\rho_{1,\ell} t} \bE \big[    |X_{t}|^\ell \big]
    \\\notag 
    & \quad 
     \leq 
     \bE \big[       |X_{0} |^\ell  \big] 
    +   \int_0^t    \ell e^{-\rho_{1,\ell} s}    \bE \big[    |X_{s}|^{\ell-2} \big( \langle    X_{s}, f( X_{s}-\bar{X}_{s} )   \rangle 
    + (\ell-1)|\fs( X_{s}-\bar{X}_{s} )|
    \big)
    \big]\dd s
    \\\notag 
     & \qquad 
     +
       \big(
       L^{(2)}_{(bu\sigma)}\ell+ L^{(3)}_{(bu\sigma)}\ell -\rho_{1,\ell}  \big) \int_0^t e^{-\rho_{1,\ell} s} \bE \big[    |X_{s}|^\ell  \big] \dd s
       +
       \ell\int_0^t e^{-\rho_{1,\ell} s}
       L^{(1)}_{(bu\sigma)}
       \bE \big[    |X_{s}|^{\ell-2} \big] \dd s
       \\\notag 
        & \quad \leq 
         \bE \big[       |X_{0} |^l  \big]
         +
           \frac{2(\ell  L^{(1)}_{(bu\sigma)})^\ell}{\ell}\int_0^t e^{-\rho_{1,\ell} s} \dd s
        \\\notag 
        &\qquad + 
        \Big(
       \ell( L^{(2)}_{(bu\sigma)}+ L^{(3)}_{(bu\sigma)}+2L^{(1),+}_{(f)}+L^{(3)}_{(f)}/2)+\frac{\ell-2}\ell{}  -\rho_{1,\ell}  \Big) \int_0^t e^{-\rho_{1,\ell} s} \bE \big[    |X_{s}|^\ell  \big] \dd s
       \\
       \label{eq: moment bound ergodicity solution}
       &  \quad 
     \le 
       \bE \big[       |X_{0} |^l  \big] + \frac{C}{\rho_{1,\ell}}(1-e^{-\rho_{1,\ell} t}),
\end{align}
\color{black}
where we used  Assumption \ref{Ass:Monotone Assumption} $(\mathbf{A}^f,~\mathbf{A}^{\fs})~$, \eqref{eq: lp moment expectation result} and { \color{black}  Young's inequality for the last term in the first inequality.} 
Similarly, for $\rho_{1,\ell}= 0 $, we have
\begin{align*}
      \bE \big[    |X_{t}|^\ell    \big]
     \le 
       \bE \big[       |X_{0} |^\ell  \big] + Ct . 
\end{align*}
Using the properties of the Wasserstein metric we have
\begin{align}
\nonumber 
\big(W^{(\ell)}(P^*_{0,t} \mu,\delta_0)\big)^\ell
&=
\big(W^{(\ell)}(\mu^{X}_{t},\delta_0)\big)^\ell 
\le    
\bE \big[    |X_{t} |^\ell    \big]
\\
& \nonumber
\le   
\bE \big[    |X_{0}|^\ell    \big] e^{\rho_{1,\ell} t}
  +\frac{C}{\rho_{1,\ell}}(e^{\rho_{1,\ell} t}-1)  \1_{ \rho_{1,\ell} \neq 0 }+ Ct  \1_{ \rho_{1,\ell} = 0 } 
\\ \label{aux:contratcioninequality3}
&  \leq  
  e^{\rho_{1,\ell} t}  \big(W^{(\ell)}( \mu,\delta_0)\big)^\ell
  +\frac{C}{\rho_{1,\ell}}(e^{\rho_{1,\ell} t}-1)  \1_{ \rho_{1,\ell} \neq 0 }+ Ct  \1_{ \rho_{1,\ell} = 0 }
  .
\end{align}
\end{proof}

\begin{proof}[Proof of Theorem \ref{Thm:ergodicity}]
\textit{Proof of statement 2.} 
\color{black}
Consider two  solutions $X, Y$ of \eqref{Eq:General MVSDE}, driven by the same Brownian motion but with different initial conditions $X_0\sim \mu,Y_0\sim \nu$, $\mu,\nu \in \cP_{\ell}(\bR^d),~\ell>2q+2$. 
Consider the corresponding non interacting particle systems $(X^i_t,Y^i_t)_{i \in \llbracket 1,N\rrbracket}$, for $t \geq 0$, satisfying \eqref{Eq:Non interacting particles}, where each of the initial conditions $(X^i_0)_{i \in \llbracket 1,N\rrbracket}$, $ (Y^i_0)_{i \in \llbracket 1,N\rrbracket}$ are i.i.d.
The corresponding interacting particle couples satisfying \eqref{Eq:MV-SDE Propagation} are denoted by $( X^{i,N}_t,Y^{i,N}_t)_{i \in \llbracket 1,N\rrbracket}$.

The direct study of the difference $X_t-Y_t$ is not a feasible avenue to prove this result. It leads to problems with the estimates involving the convolution term  
\begin{align}
    \label{eq: prob in s2}
     \bE\Big[   
     \Big\langle     X_t-Y_t , \int_{ \bR^d} f( X_t-x ) \mu_{t}^X(\mathrm{d}x) - \int_{ \bR^d} f( Y_{s}-y ) \nu_{t}^Y(\mathrm{d}y) \Big\rangle   \Big],
\end{align}
where $X_t$ and $Y_t$ are not necessarily independent, and hence  Lemma \ref{AppendixLemma-aux2} cannot be applied. 

The route here, instead of the direct analysis of $X_t-Y_t$, is to rely on the triangle inequality and PoC results. To be precise, we will subsequently prove the following chain of inequalities:  
\begin{align*}
\big(W^{(2)}(P^*_{0,t} \mu,P^*_{0,t}\nu)\big)^2
& \leq 
 \bE \big[    |X_{t}^i-Y_{t}^i|^2   \big]
\nonumber 
\\
& \leq 
3\lim_{N\rightarrow \infty } \Big( \bE \big[    |X_{t}^i-X_{t}^{i,N}|^2   \big]
+
\bE \big[    |X_{t}^{i,N}-Y_{t}^{i,N}|^2   \big]
+
\bE \big[    |Y_{t}^i-Y_{t}^{i,N}|^2   \big]
\Big)
\\
& 
= 3  e^{\rho_2 t} \big(W^{(2)}(\mu,\nu)\big)^2.
\end{align*}
%
%
The \textit{first and third term} in above estimate will be analysed via a PoC result, and the \textit{middle term} is carefully estimated in \eqref{eq: middle estimate XiN-yiN} onwards. 
Note that by the established wellposedness, the processes $( X^i_t,Y^i_t)_{i \in \llbracket 1,N\rrbracket}$ and the interacting particle systems $( X^{i,N}_t,Y^{i,N}_t)_{i \in \llbracket 1,N\rrbracket}$ have finite moments up to order $\ell$.  \\


\textit{Part 1: the 1st and 3rd term.} 
We start with the analysis of the first (and third term), similar to the steps used in \eqref{eq: poc diff 0}. Applying It\^o's formula to $|X_{t}^{i, N}-X_{t}^{i}|^2$, taking expectations and summing over $i$ yields 

\begin{align}
    \label{eq: diff second moment bound }
     & \sum_{i=1}^N \bE \big[    |X_{t}^{i, N}-X_{t}^{i}|^2   \big]
    \\
     \nonumber 
     & \leq 
       \sum_{i=1}^N \int_0^t  \Big\{ \big(   2 L_{(b)}^{(2)}  + 2L^{(1)}_{(u\sigma)} +4L_{(f)}^{(1),+} \big)
       \bE \big[   |    X_{s}^{i, N}-X_{s}^{i} |^2 
   \big]  
   \\
   \nonumber 
   & 
   \qquad \qquad \qquad \qquad + 
   \big( 2 L_{(b)}^{(3)} +2L^{(2)}_{(u\sigma)}  \big)
   \bE\big[\big(W^{(2)}( \mu^{X}_{s}, \mu^{X,N }_{s})\big)^2 \big]
   \Big\}\dd s
   \\
    \label{eq: erg poc diff1}
   & + \frac 2 N \sum_{i=1}^N\int_0^t       
   \Big(    \bE    \big[ \big|    X_{s}^{i, N}-X_{s}^{i}      \big|^2\big]  \Big)^{1/2}
   \Big(    \bE    \big[ \big|       \sum_{j=1}^N f( X_{s}^{i}-X_{s}^{j} )
     - \int_{\bR^d} f(  X_{s}^{i} -y ) \mu_{s}^X(\mathrm{d}y)    \big|^2\big]  \Big)^{1/2}
   \dd s 
   \\ \label{eq: erg poc diff2}
   & + 
   \frac{4}{N^2}   \sum_{i=1}^N\int_0^t 
    \bE        \big[ \big|       \sum_{j=1}^N \big(  \fs ( X_{s}^{i}-X_{s}^{j} )
     - \int_{\bR^d} \fs (  X_{s}^{i} -y ) \mu_{s}^X(\mathrm{d}y)  \big)  \big|^2\big] 
     ~ \dd s.
\end{align} 
Further, from the calculations in  \eqref{eq: poc summation square} along with the uniform
moment bound result \eqref{eq: moment bound ergodicity solution}, we have (recall    $\rho_{1,\ell}<0$ for $2q+2 <\ell\leq m $) 
\begin{align*}
\bE \big[    |X_{s}^i |^{2q+2}    \big]
\leq 1+ 
     \bE \big[    |X_{s}^i |^\ell    \big]
&\le   
\bE \big[    |X_{0}|^\ell    \big] e^{\rho_{1,\ell} t}
  +\frac{C}{\rho_{1,\ell}}(e^{\rho_{1,\ell} t}-1)  
  \leq C_{\ell},
\end{align*}
for some constant $C_\ell$ independent of $t,N,i$.
Therefore, for the terms in \eqref{eq: erg poc diff1} and \eqref{eq: erg poc diff2}, we have 
\begin{align*}
     &\bE        \big[ \big|       \sum_{j=1}^N\big(  f ( X_{s}^{i}-X_{s}^{j} )
     - \int_{\bR^d} f (  X_{s}^{i} -y ) \mu_{s}^X(\mathrm{d}y)  \big)  \big|^2\big]
     \leq C N \bE  \big[ \big|   X_{s}^{i}  \big|^{2q+2}\big] 
     \leq C_{\ell} N ,
  \\
     &\bE        \big[ \big|       \sum_{j=1}^N\big(  \fs ( X_{s}^{i}-X_{s}^{j} )
     - \int_{\bR^d} \fs (  X_{s}^{i} -y ) \mu_{s}^X(\mathrm{d}y)  \big)  \big|^2\big]
     \leq C N \bE  \big[ \big|   X_{s}^{i}  \big|^{2q+2}\big] 
     \leq C_{\ell} N.
\end{align*} 
To summarise, we derive for \eqref{eq: diff second moment bound }

\begin{align}
    \notag 
     & \frac{1}{N}\sum_{i=1}^N \bE \big[    |X_{t}^{i, N}-X_{t}^{i}|^2   \big]
     \leq 
     \frac{1}{N}\sum_{i=1}^N
       \int_0^t  
       \bigg( 
        \big(    2 L_{(b)}^{(2)}  + 2L^{(1)}_{(u\sigma)} +4L_{(f)}^{(1),+} \big)
       \bE \big[   |    X_{s}^{i, N}-X_{s}^{i} |^2 
   \big]  
   \\
   \label{eq: summarize formula for xiN-xi}
    & 
    \qquad + 
   \big( 2 L_{(b)}^{(3)} +2L^{(2)}_{(u\sigma)}  \big)
   \bE\big[\big(W^{(2)}( \mu^{X}_{s}, \mu^{X,N }_{s})\big)^2 \big]
   +
   \frac{C_\ell}{N} \sum_{i=1}^N \sqrt{N}
   + \frac{C_\ell}{N^2} \sum_{i=1}^N  
    N 
   \bigg)   \dd s.
\end{align}

Having obtained estimates for the average, we now go back to It\^o's formula applied to $e^{-\rho_2 t}|X_{t}^{i, N}-X_{t}^{i}|^2$. Taking expectation and taking \eqref{eq: summarize formula for xiN-xi} into account, we have
\begin{align}
    \nonumber 
       e^{ -\rho_{2} t}
       &
       \bE \big[    |X_{t}^{i, N} -X_{t}^{i}|^2   \big]
     \\      \nonumber 
     &
     \leq 
       \int_0^t  
       e^{ -\rho_{2} s }\bigg( 
        \big(  -\rho_{2}+  2 L_{(b)}^{(2)}  + 2L^{(1)}_{(u\sigma)} +4L_{(f)}^{(1),+} \big)
       \bE \big[   |    X_{s}^{i, N}-X_{s}^{i} |^2 
   \big]  
   \\
     \label{eq: ii poc diff} 
    & 
    \qquad \qquad \qquad \qquad \qquad + 
   \big( 2 L_{(b)}^{(3)} +2L^{(2)}_{(u\sigma)}  \big)
   \bE\big[\big(W^{(2)}( \mu^{X}_{s}, \mu^{X,N }_{s})\big)^2 \big]
   + \frac{C_\ell     }{\sqrt{N}}
   \bigg)   \dd s.
\end{align} 

Similar to the arguments  in  \cite[Equation (3.26)]{adams2020large},  we have 
\begin{align}
    \notag 
    \bE\Big[\Big(W^{(2)}( \mu^{X}_{s}, \mu^{X,N }_{s})\Big)^2 
       \Big]
    \leq 
    \frac{2}{N} \sum_{j=1}^N \bE \big[    |X_{s}^{j, N}-X_{s}^{j}|^2   \big]
    +2 \bE\Bigg[\bigg(W^{(2)}\Big( \mu^{X}_{s}, \frac{1}{N} \sum_{j=1}^N \delta_{X_{s}^{j}} \Big)~\bigg)^2
        \Bigg].
\end{align}
Now, applying the uniform $\ell$-moment bound in \eqref{eq: moment bound ergodicity solution} to $ X_{t}^{j}$, we have $ \mu_t^X \in \cP_\ell(\bR^d)$ for any $t \geq 0$. Consequently, from the result in \cite[Theorem 5.8]{CarmonaDelarue2017book1}, we have 
\begin{align}
    \label{eq: result of poc carmona}
     \bE\Bigg[\bigg(W^{(2)}\Big( \mu^{X}_{s}, \frac{1}{N} \sum_{j=1}^N \delta_{X_{s}^{j}} \Big)~\bigg)^2
     \Bigg]
     \leq C_{N,d} := 
     C \begin{cases}
         N^{-1/2}, &\quad d < 4,
         \\
         N^{-1/2} \ln N, &\quad d =4,
         \\
         N^{-2/d}, &\quad d >  4, 
     \end{cases}
\end{align}
for some positive constant $C$ independent of $s,N$. 

Injecting the result above into \eqref{eq: ii poc diff}, and recalling that $\rho_2= 2L_{(b)}^{(2)}+ 4L_{(b)}^{(3)}+2 L^{(1)}_{(u\sigma)} +4 L^{(2)}_{(u\sigma)} + 4L^{(1),+}_{(f)}$, we obtain 
\begin{align}
    \notag 
     e^{ -\rho_{2} t}\bE \big[    |X_{t}^{i, N}-X_{t}^{i}|^2   \big]
     &\leq  \int_0^t  
       e^{ -\rho_{2} s }\bigg(   \frac{C_\ell    }{\sqrt{N}}
       + 4\big(  L_{(b)}^{(3)} +L^{(2)}_{(u\sigma)}  \big) C_{N,d}
   \bigg)   \dd s
    \\
    \notag 
     &
    \leq   
    \frac{ C_\ell}{  \rho_2}   (1-e^{-\rho_2 t})\big(  \frac{1}{\sqrt{N}} +C_{N,d} \big)
    \1_{ \rho_2 \neq 0 }
    +
    C_\ell\big(  \frac{1}{\sqrt{N}} +C_{N,d} \big) t  \1_{ \rho_2 = 0 }
           ,
\end{align}
for some positive constant $C_\ell $ independent of $t,N,i$. 

The final step of these calculations is to conclude the PoC result (uniform in time if $\rho_2 <0$). Concretely, from the last estimate, it follows that 
\begin{align}
    \label{eq: poc result uni }
    \lim_{N\rightarrow \infty} \bE \big[    |X_{t}^{i, N}-X_{t}^{i}|^2   \big] = 0. 
\end{align}

\textit{Part 2: the middle term.} 
We now proceed with the second part of the proof and tackle, estimates for differences between the two particle systems $|X_t^{i,N}-Y_t^{i,N}|$. First, we obtain estimates for the averages, then estimates for the particles $i$ themselves and finally draw the conclusion. 

Applying It\^o's formula to $|X_{t}^{i, N}-Y_t^{i,N}|^2$, taking expectations and summing over $i$ yields 
\begin{align} 
    \label{eq: middle estimate XiN-yiN}
     & \sum_{i=1}^N  \bE \big[    |X_{t}^{i, N}-Y_{t}^{i,N}|^2   \big]
    =  \sum_{i=1}^N \bE \big[    |X_{0}^{i, N}-Y_{0}^{i,N}|^2   \big]
     \\
    \nonumber 
    & \quad+ 
    2\sum_{i=1}^N\int_0^t     \bE \big[    \langle    X_{s}^{i, N}-Y_{s}^{i,N} , v(X_{s}^{i, N}, \mu^{X,N}_{s} )-v(Y_{s}^{i,N}, \mu^{Y,N}_{s} )  \rangle  \big]\dd s
    \\
    \nonumber 
    & \quad+ 
     2\sum_{i=1}^N\int_0^t    \bE \big[    \langle     X_{s}^{i, N}-Y_{s}^{i,N} , b(s,X_{s}^{i, N}, \mu^{X,N}_{s} )-b(s,Y_{s}^{i,N}, \mu^{Y,N}_{s}  )  \rangle \big] \dd s
     \\
    \nonumber 
    & \quad+ \sum_{i=1}^N
      \int_0^t    \bE \big[ 
     | \hs(s,X_{s}^{i, N}, \mu^{X,N}_{s} )-\hs(s,Y_{s}^{i,N}, \mu^{Y,N}_{s}  )   |^2  \big] \dd s
     \\
      \nonumber 
   & \le \sum_{i=1}^N \bE \big[    |X_{0}^{i, N}-Y_{0}^{i,N}|^2   \big]
     \\
    \label{eq: psx-psy term 1 } 
    & \quad+    
   2
   \int_0^t   \sum_{i=1}^N \bE \big[    \langle    X_{s}^{i, N}-Y_{s}^{i,N}, b(s,X_{s}^{i, N}, \mu^{X,N}_{s} )-b(s,Y_{s}^{i,N}, \mu^{Y,N}_{s}  )  \rangle  
   \big] \dd s
     \\
     \label{eq: psx-psy term 2 }
     & \quad + 
     2 \sum_{i=1}^N
     \int_0^t     \bE \big[   
     \langle    X_{s}^{i, N}-Y_{s}^{i,N}, \frac1N \sum_{j=1}^N f( X_{s}^{i, N}-X_{s}^{j, N} ) 
     -\frac1N \sum_{j=1}^N f( Y_{s}^{i,N}-Y_{s}^{j,N} )\rangle\big]\dd s 
      \\
     \label{eq: psx-psy term 3 }
     & \quad + 
     2 \sum_{i=1}^N
     \int_0^t     \bE \big[  \big|  
    \frac{1}{N} \sum_{j=1}^N  \fs( X_{s}^{i, N}-X_{s}^{j, N}) -  \frac{1}{N} \sum_{j=1}^N \fs(X_{s}^{i}-X_{s}^{j} ) \big|^2  
     \big]\dd s
      \\
     \label{eq: psx-psy term 4 } 
    & \quad + 
     2 \sum_{i=1}^N \int_0^t     \bE \Big[  \langle   X_{s}^{i, N}-Y_{s}^{i,N}, u(X_{s}^{i, N}, \mu^{X,N}_{s} )-u(Y_{s}^{i,N},  \mu^{Y,N}_{s} )  \rangle \Big] \dd s 
     \\
      \label{eq: psx-psy term 5 }
    & \quad+2 \sum_{i=1}^N \int_0^t 
     \bE \Big[  
    | \sigma(s,X_{s}^{i, N}, \mu^{X,N}_{s} )-\sigma(s,Y_{s}^{i,N},  \mu^{Y,N}_{s} )   |^2  \Big] \dd s. 
\end{align} 
Now, from Assumption \ref{Ass:Monotone Assumption}, in particular 
$ (\mathbf{A}^b,\mathbf{A}^u,~\mathbf{A}^\sigma)$, we have 
\begin{align}
    \label{eq: psx-psy term 1 45 result }
    \nonumber 
    \eqref{eq: psx-psy term 1 } 
    &
    + \eqref{eq: psx-psy term 4 } +
    \eqref{eq: psx-psy term 5 } 
    \\
    & 
    \leq 
    \big(   2 L_{(b)}^{(2)}  + 2 L_{(b)}^{(3)}  + 2L^{(1)}_{(u\sigma)}  + 2L^{(2)}_{(u\sigma)} \big)
        \sum_{i=1}^N \int_0^t    \bE \big[   |    X_{s}^{i, N}-Y_{s}^{i,N} |^2 
   \big]    
   \dd s.
\end{align}
 From $(\mathbf{A}^f,~\mathbf{A}^{\fs})$, we have 
\begin{align}
    \notag 
     & \eqref{eq: psx-psy term 2 }  + \eqref{eq: psx-psy term 3 }  
     \\  \notag 
     & \leq  
     \frac{2}{N}\sum_{i=1}^N \sum_{j=1}^N  \int_0^t \bE \Big[   
     \big\langle    (X_{s}^{i, N}-Y_{s}^{i, N})   ,  f( X_{s}^{i, N}-X_{s}^{j, N} ) - f( Y_{s}^{i, N}- Y_{s}^{j, N} ) \big\rangle  
    \\   \notag  
    &\qquad\qquad \qquad \qquad \qquad  +
      |   \fs ( X_{s}^{i, N}-X_{s}^{j, N} ) - \fs (Y_{s}^{i, N}- Y_{s}^{j, N} )  |^2 
     \Big]
    \dd s
     \\  
    \nonumber
     & = 
     \frac{1}{N}\sum_{i=1}^N \sum_{j=1}^N  \int_0^t 
     \bE \Big[  
     \big\langle    (X_{s}^{i, N}-Y_{s}^{i, N}) -  (X_{s}^{j, N}-Y_{s}^{j, N}) ,  f( X_{s}^{i, N}-X_{s}^{j, N} ) - f( Y_{s}^{i, N}- Y_{s}^{j, N} ) \big\rangle  
    \\    \label{eq: psx-psy term symmetry } 
    &\qquad\qquad \qquad \qquad \qquad  +
    2 |   \fs ( X_{s}^{i, N}-X_{s}^{j, N} ) - \fs (Y_{s}^{i, N}- Y_{s}^{j, N} )  |^2 
    \Big] 
    \dd s,
    \\ 
    \notag 
    &\leq 
    \frac{L_{(f)}^{(1),+} }{N}\sum_{i=1}^N \sum_{j=1}^N
    \int_0^t 
    \bE\big[     |(X_{s}^{i, N}-Y_{s}^{i, N}) -  (X_{s}^{j, N}-Y_{s}^{j, N})   |^2  \big] 
    \dd s
     \\ \label{eq: psx-psy term 2 3 result }
    &\leq
      4 L_{(f)}^{(1),+}  \sum_{i=1}^N  
    \int_0^t 
    \bE\big[     |X_{s}^{i, N}-Y_{s}^{i, N} |^2  \big] 
    \dd s.
\end{align}

Substituting the results of \eqref{eq: psx-psy term 1 45 result } and      \eqref{eq: psx-psy term 2 3 result } into \eqref{eq: psx-psy term 1 }-\eqref{eq: psx-psy term 5 }, we conclude
\begin{align*}   
     \sum_{i=1}^N \bE \big[ &   |X_{t}^{i, N}-Y_{t}^{i,N}|^2   \big]
    \leq
    \sum_{i=1}^N \bE \big[    |X_{0}^{i, N}-Y_{0}^{i,N}|^2   \big]
     \\
    \nonumber 
    & \quad+       
      \big(   2 L_{(b)}^{(2)}  + 2 L_{(b)}^{(3)}  + 2L^{(1)}_{(u\sigma)} + 2L^{(2)}_{(u\sigma)}+4L_{(f)}^{(1),+} \big) \sum_{i=1}^N \int_0^t    \bE \big[   |  X_{s}^{i, N}-Y_{s}^{i,N} |^2 
   \big]   
   \dd s . 
\end{align*}

As in the previous step, having obtained estimates for the average, we now go back to It\^o's formula applied to $e^{-\rho_2 t}|X_{t}^{i, N}-Y_{t}^{i, N}|^2$. In particular, we have 
\begin{align}
    \notag 
     e^{-\rho_2 t}&
     \sum_{i=1}^N \bE \big[    |X_{t}^{i, N}-Y_{t}^{i,N}|^2   \big]
    \leq   \sum_{i=1}^N \bE \big[    |X_{0}^{i, N}-Y_{0}^{i,N}|^2   \big]
      \\
    \nonumber 
    & +       
      \big(- \rho_2 + 2 L_{(b)}^{(2)}  + 2 L_{(b)}^{(3)}  + 2L^{(1)}_{(u\sigma)} + 2L^{(2)}_{(u\sigma)}+4L_{(f)}^{(1),+} \big) \sum_{i=1}^N \int_0^t   e^{\rho_2 s} \bE \big[   |  X_{s}^{i, N}-Y_{s}^{i,N} |^2 
   \big]   
   \dd s
   \\
   \label{eq: ps diff result }
   &\leq \sum_{i=1}^N \bE \big[    |X_{0}^{i, N}-Y_{0}^{i,N}|^2   \big]. 
\end{align}

Combining the results  in   \eqref{eq: poc result uni } and \eqref{eq: ps diff result }, we have 
\begin{align}
\label{eq: diff methodology}
\bE \big[    |X_{t}^i-Y_{t}^i|^2   \big]
  &= 
  \lim_{N\rightarrow \infty}   \frac{1}{N}\sum_{i=1}^N\bE \big[     |X_{t}^i-Y_{t}^i|^2   \big]
\\
\notag 
&\leq 
\lim_{N\rightarrow \infty} 
\frac{3}{N}\sum_{i=1}^N
\bigg(
\bE \big[     |X_{t}^i-X_{t}^{i,N}|^2   \big]
+
\bE \big[     |Y_{t}^i-Y_{t}^{i,N}|^2   \big]
+
\bE \big[     | X_{t}^{i,N}-Y_{t}^{i,N}|^2   \big]
\bigg)
\\
\notag
&\leq 
\lim_{N\rightarrow \infty}
\frac{3}{N}\sum_{i=1}^N
\bE \big[     | X_{t}^{i,N}-Y_{t}^{i,N}|^2   \big]
=
3\bE \big[    |X_{0}-Y_{0}|^2   \big] e^{ \rho_2 t }. 
\end{align}

\textit{Part 3: conclusion.} 
Finally, using the properties of the Wasserstein metric, we have
\begin{align}
\label{aux:contratcioninequality}
\big(W^{(2)}(P^*_{0,t} \mu,P^*_{0,t}\nu)\big)^2
\leq 
 \bE \big[    |X_{t}-Y_{t}|^2   \big]
& \nonumber 
\le    3 
\bE \big[    |X_{0}-Y_{0}|^2   \big] e^{ \rho_2 t }
\\
& 
= 3  e^{\rho_2 t} \big(W^{(2)}(\mu,\nu)\big)^2,
\end{align}
where in the last inequality we took the infimum on both sides over all couplings between $\mu$ and $\nu$. This concludes the proof of the second statement.

\color{black}
\end{proof}

\begin{proof}[Proof of Theorem \ref{Thm:ergodicity}]
\textit{Proof of statement 3.} 
In the previous two statements we worked on the finite time interval $[0,T]$ and this statement extends the work to $ [0,\infty)$. We also emphasize that the reason why we work with $\cP_{2\ell-2}$ instead of $\cP_{\ell}$ with $1+m/2\geq\ell>2q+2$ will become apparent later in the proof. Let $X_0\sim \mu_0,Y_0\sim \nu_0$ with $\mu_0,\nu_0 \in \cP_{2\ell-2}(\bR^d)$ be given.  

From Theorem \ref{Thm:MV Monotone Existence} and the flow property on $\cP_{\ell}(\bR^d)$ of \eqref{Eq:General MVSDE} described by the semigroup operator $(P^*_{s,t})$ (defined above Theorem \ref{Thm:ergodicity}), we  extend $(\mu^{X}_{t})_{t \geq 0}$, $(\mu^{Y}_{t})_{t \geq 0}$ (e.g., via patching up solutions inductively over intervals $[nT,(n+1)T]$ for $n\in\bN$). 
Further, since $\rho_2 <0$, we have a contraction in \eqref{aux:contratcioninequality} and hence $\lim_{t\rightarrow \infty}W^{(2)}(\mu^{X}_{t},\mu^{Y}_{t})=0$.  
By using $\rho_{1,2\ell-2} < 0$, we have $\sup_{t \geq 0}W^{(2\ell-2)}(\mu^{X}_{t},\delta_0)< \infty$, which guarantees that $\mu^{X}_{t}\in\cP_{2\ell-2}(\bR^d)$ for all $t\ge 0$. 
The main proof follows via a shift-coupling argument and the properties shown so far under $\rho_2  , \rho_{1,2\ell-2}< 0$, but with a critical additional element regarding establishing contraction and higher order moments for the candidate invariant measure so that the wellposedness result applies.  

%
We start by showing that $(P^*_{0,t}\nu_0)_{t\geq 0}$ is a Cauchy-sequence in $(\cP_2(\mathbb{R}^d),W^{(2)})$, and use this result to show that $(P^*_{0,t}\nu_0)_{t\geq 0}$ is also Cauchy-sequence in $(\cP_{\ell}(\mathbb{R}^d),W^{({\ell})})$ for a given $\nu_0\in \cP_{2\ell-2}(\mathbb{R}^d)$. These arguments suffice to first find a candidate invariant distribution and then to characterize it as an ergodic limit (see below).

\textit{Using the $W^{(2)}$-contraction.} Given $\nu_0\in \cP_{2\ell-2}(\bR^d)$, from \eqref{eq:aux.SemigroupDifferenceEstimate} with $\rho_2<0$, we have exponential contraction and hence for any $0\leq s <t<\infty$
\begin{align*}
    W^{(2)}\big(P^*_{0,t} \nu_0,P^*_{0,t+s}\nu_0 \big)
    =
    W^{(2)}\big( P^*_{0,t} \nu_0 , P^*_{0,t}\big(P^*_{0,s}\nu_0) \big)
    \le    
   e^{\rho_2 t/2} W^{(2)}\big(\nu_0,P^*_{0,s}\nu_0\big),
\end{align*}
where we used the semigroup property that $P^*_{s,t}=P^*_{0,t-s}$ (since $b,\sigma$ are independent of $t$; see \cite{Wang2018DDSDE-LandauType,hu2021long,liu2022ergodicity}).

\textit{The bounded orbit argument.} From \eqref{eq:Bounded Orbit} with $\rho_{1,2\ell-2}< 0$ and $m \geq 2\ell-2> 4q+2$, we have via the  triangle inequality

\begin{align}
\nonumber
    \sup_{t\geq 0}&
    \big(W^{(2\ell-2)}\big(P^*_{0,t} \nu_0, \nu_0\big)  \big)^{2\ell-2}
    \le   
    C \Big(\big(W^{(2\ell-2)}\big(\nu_0, \delta_0\big)\big)^{2\ell-2}+
    \sup_{t\geq 0}
    \big(W^{(2\ell-2)}\big(P^*_{0,t} \nu_0, \delta_0\big)\big)^{2\ell-2} \Big)
     \\\nonumber
    & \le   
    C \bigg(\big(W^{(2\ell-2)}\big(\nu_0, \delta_0\big)\big)^{2\ell-2}+
    \sup_{t\geq 0} \Big(e^{\rho_{1,2\ell-2}t}
    \big(W^{(2\ell-2)}\big( \nu_0, \delta_0\big)\big)^{2\ell-2}\Big)
     \\\nonumber
    & \qquad \qquad \qquad \qquad \qquad \qquad  \qquad 
    +\sup_{t\geq 0}\frac{1}{\rho_{1,2\ell-2}}(e^{\rho_{1,2\ell-2}t}-1  ) \bigg)
    \\\label{eq:proof of egordicity:bounded orbit}
    & \le C \Big(  
    \big(W^{(2\ell-2)}\big(\nu_0, \delta_0\big)\big)^{2\ell-2}
    - \tfrac{1}{ \rho_{1,2\ell-2}} \Big)
    <\infty.
\end{align}
In other words, the orbit of $t\mapsto P^{*}_{0,t}\nu_0$ remains within a sufficiently large $W^{(2\ell-2)}$-ball, which also shows the finiteness of 
$ \sup_{t\geq 0}  W^{(2)}\big(P^*_{0,t} \nu_0, \nu_0\big)$. 

\textit{A $W^{(2)}$-Cauchy-sequence and the completeness argument.} Combining the two previous elements we have
\begin{align*}
\lim_{s\to \infty} 
    W^{(2)}\big(P^*_{0,t} \nu_0,P^*_{0,t+s}\nu_0 \big)
    &
    =
    \lim_{s\to \infty} 
    W^{(2)}\big( P^*_{0,t} \nu_0 , P^*_{0,t} (P^*_{0,s}\nu_0) \big)
    \\
    &
    \le    
   e^{\rho_2 t/2} \lim_{s\to \infty} W^{(2)}\big(\nu_0,P^*_{0,s}\nu_0\big) 
   \leq C e^{\rho_2 t/2}.
\end{align*}
This shows the sequence to be Cauchy and since $(\cP_2(\mathbb{R}^d),W^{(2)})$ is complete, there exists a limiting measure $\bar \mu \in \cP_2(\mathbb{R}^d)$ to the sequence, i.e., we have
\begin{align*}
    \lim_{t\to \infty} W^{(2)}( P^*_{0,t} \nu_0 , \bar \mu) = 0.
\end{align*}

\textit{The candidate invariant measure $\bar \mu$ has sufficiently high moments.}  The current issue with $\bar \mu \in \cP_2({\bR^d})$ is that we cannot guarantee, via Theorem \ref{Thm:MV Monotone Existence}, that $P^*_{0,t} \bar \mu$ has meaning (although we have convergence in $\cP_2(\mathbb{R}^d)$). Thus, we need to show that $(P^*_{0,t}\nu_0)_{t\geq 0}$ also has the Cauchy-sequence property in $(\cP_{\ell}(\mathbb{R}^d),W^{({\ell})})$ so that $\bar \mu \in \cP_{\ell}({\bR^d})$.  Set $X_0\sim \nu_0 \in  \cP_{2\ell-2}(\bR^d)$, $Y_0 \sim P^*_{0,s}\nu_0\in  \cP_{2\ell-2}(\bR^d)$ for $s\ge 0$, then for any $t\geq 0$ we have via Cauchy–Schwarz inequality 
\begin{align*}
 & \bE \big[    |X_{t}-Y_{t}|^{\ell}   \big]
 =
 \bE \big[    |X_{t}-Y_{t}|~|X_{t}-Y_{t}|^{\ell-1}   \big]
   \leq 
   \sqrt{  \bE \big[    |X_{t}-Y_{t}|^{2}   \big] 
            \bE \big[    |X_{t}-Y_{t}|^{2\ell-2}   \big]  }
    \leq 
    C e^{\rho_2 t/2},
    \end{align*}
where $C$ is uniformly bounded in $t$ and depends on $\nu_0,~P^*_{0,s}\nu_0$ due to \eqref{aux:contratcioninequality3} and \eqref{eq:proof of egordicity:bounded orbit}. 
Therefore, 
\begin{align*}
    \big(W^{(\ell)} (P^*_{0,t} \nu_0,P^*_{0,t+s}\nu_0  ) \big)^{\ell}
    &\leq  
     \bE \big[    |X_{t}-Y_{t}|^{\ell}   \big]
     \leq 
    C e^{\rho_2 t/2}.
\end{align*} 
We are then able to recognize $(P^*_{0,t} \nu_0)_{t\geq 0}$ as a $W^{(\ell)}$ Cauchy-sequence in $\cP_{\ell}(\mathbb{R}^d)$, and by completeness of the space $(\cP_{\ell}(\mathbb{R}^d),W^{(\ell)})$ we conclude that the sequence converges to $\bar \mu \in \cP_{\ell}(\mathbb{R}^d)$.

\textit{Invariance argument.} To show the invariance property, it suffices to argue in $W^{(2)}$. From here, using \cite[Lemma 4.2]{Villani2009oldnew}, we obtain for any $t\geq 0$ that
\begin{align*}
    W^{(2)}( P_{0,t} \bar \mu , \bar \mu)
    \leq 
    \liminf_{s\to \infty}  W^{(2)}\big( P^*_{0,t}( P^*_{0,s} \nu_0  )  , \bar \mu\big)=0.
\end{align*}
We then conclude that $\bar \mu$ is an invariant measure. 

\textit{The ergodicity property of the system.}
The contraction inequality  \eqref{eq:aux.SemigroupDifferenceEstimate} with $\rho_2<0$ yields the exponential ergodicity of the invariant measure $\bar \mu$ in the following sense, 
\begin{align*}
     W^{(2)}\big(P^*_{0,t} \nu_0, \bar \mu \big)
     & =
     \lim_{s\to \infty}
      W^{(2)}\big(P^*_{0,t} \nu_0, P^*_{0,t} (P^*_{0,s} \nu_0) \big)
      \\
    &
    \leq 
     e^{\rho_2 t/2}
     \lim_{s \to \infty}  W^{(2)}\big(\nu_0,P^*_{0,s}\nu_0\big)
=
     e^{\rho_2 t/2}  W^{(2)}\big(\nu_0, \bar \mu \big).
     \end{align*}
Via a straightforward application of the same arguments as above, we have 
\begin{align*}
\textrm{for any } \nu_0\in \cP_{2\ell-2}(\bR^d) \qquad \lim_{t\to \infty} W^{(\ell)}(P_{0,t}^* \nu_0,\bar \mu )=0.
\end{align*}

\end{proof}


\subsection[Proof of the Stochastic C-Stability Lemma]{Proof of Lemma \ref{lemma:bc: diff leq sum}: Stochastic $C$-Stability}
The proof shown in this section is an extension of the results for classical SDEs in \cite{2015ssmBandC} to the particle system considered in this paper.
\begin{proof}
For every $n\in \llbracket 0,M \rrbracket$, we denote the difference of the two particles by
$$
e^{i,N}_{n}:=X^{i,N}_{n}-\hx^{i,N}_{n}.
$$
By the orthogonality of the conditional expectation it holds

\begin{align}
\label{eq:ehi different}
\bE \big[    
 |e^{i,N}_{n} |^{2}\big]
=
\bE \Big[    
\left|\mathbb{E}\big[e^{i,N}_{n} \mid \mathcal{F}_{t_{n-1}}\big]\right|^{2}\Big]
+
\bE \Big[   
\left|e^{i,N}_{n}-\mathbb{E}\big[e^{i,N}_{n} \mid \mathcal{F}_{t_{n-1}}\big]\right|^{2}     \Big].
\end{align}
The  term $e^{i,N}_{n}$ can be expressed as follows  
\begin{align*}
e^{i,N}_{n}=X^{i,N}_{n}
+\Psi_i (X^{i,N}_{n-1},\mu^{X,N}_{n-1}, t_{n-1}, h )
-\Psi_i (X^{i,N}_{n-1},\mu^{X,N}_{n-1}, t_{n-1}, h )
-\hx^{i,N}_{n}.    
\end{align*}
 Thus, for the first term in \eqref{eq:ehi different},   it follows from the inequality $(a+b)^{2}=a^{2}+2 a b+b^{2} \leq$ $\left(1+h^{-1}\right) a^{2}+\left(1+h\right) b^{2}$ that, we have 
\begin{align}
\nonumber
\bE \Big[   &
\left|\mathbb{E}\big[e^{i,N}_{n} \mid \mathcal{F}_{t_{n-1}}\big]\right|^2 \Big] 
\leq 
   (1+\tfrac{1}{h}) \bE \Big[    
\left|\mathbb{E}\big[X^{i,N}_{n}
-\Psi_i (X^{i,N}_{n-1},\mu^{X,N}_{n-1}, t_{n-1}, h ) 
\mid 
\mathcal{F}_{t_{n-1}}\big]\right|^2 \Big]
\\
\label{eq:c-stable result eq1}
& \quad \quad+(1+h) \bE \Big[    
\left|\mathbb{E}\big[\Psi_i (X^{i,N}_{n-1},\mu^{X,N}_{n-1}, t_{n-1}, h )
-\hx^{i,N}_{n}
\mid 
\mathcal{F}_{t_{n-1}}\big]\right|^2\Big].
\end{align}
Similarly, for the second term in \eqref{eq:ehi different}, choose $\eta$ such that $1<\eta\leq (m-1)$ in order {\color{black}   to use \eqref{eq:condition:driftoffsetdiffusion} in  Assumption \ref{Ass:Monotone Assumption}} $(\mathbf{A}^u,~\mathbf{A}^\sigma)$, we have

\begin{align}
\nonumber
 \bE \Big[    
\Big| e^{i,N}_{n} &-\mathbb{E}\big[e^{i,N}_{n} | \mathcal{F}_{t_{n-1}}\big] \Big|^{2} \Big]
\\
\nonumber
\leq 
&   (1+\tfrac{1}{\eta -1})
\bE \Big[    
\Big|
 \big(\mathrm{id}-\mathbb{E}[\cdot \mid \mathcal{F}_{t_{n-1}}] \big)
\Big( 
X^{i,N}_{n}
-\Psi_i (X^{i,N}_{n-1},\mu^{X,N}_{n-1}, t_{n-1}, h ) \Big)
\Big|^2 \Big]
\\
\label{eq:c-stable result eq2}
&+\eta ~
\bE \Big[    
\Big|\big(\mathrm{id}-\mathbb{E} [\cdot \mid \mathcal{F}_{t_{n-1}} ]\big)
\Big( 
\Psi_i (X^{i,N}_{n-1},\mu^{X,N}_{n-1}, t_{n-1}, h ) 
-\hx^{i,N}_{n} \Big)
\Big|^2 \Big].
\end{align}
Using the fact $\hx^{i,N}_{n}=\Psi_i (\hx^{i,N}_{n-1},\hm^{X,N}_{n-1}, t_{n-1}, h ) $, and the definition of $C$-stability for the terms  \eqref{eq:c-stable result eq1}, \eqref{eq:c-stable result eq2} ({\color{black}note that $h\in (0,1)$}) 
  \color{black}  
\begin{align*}
(1+h)& \bE \Big[    
\left|\mathbb{E}\big[\Psi_i (X^{i,N}_{n-1},\mu^{X,N}_{n-1}, t_{n-1}, h )
-\hx^{i,N}_{n}
\mid 
\mathcal{F}_{t_{n-1}}\big]\right|^2\Big]
\\
&
    +\eta ~
    \bE \Big[    
    \Big|\big(\mathrm{id}-\mathbb{E} [\cdot \mid \mathcal{F}_{t_{n-1}} ]\big)
    \Big( 
    \Psi_i (X^{i,N}_{n-1},\mu^{X,N}_{n-1}, t_{n-1}, h ) 
    -\hx^{i,N}_{n} \Big)
    \Big|^2 \Big]
    \\
    &\leq 
    (1+h)\Big( (1+Ch)  \bE \big[    
    |e_{n-1}^{i,N} |^{2} \big]
    +C h \bE \big[     | W^{(2)}(\hm^{X,N}_{n-1},\mu^{X,N}_{n-1} )|^2  \big]\Big).
\end{align*}
\color{black}   
We then further estimate \eqref{eq:ehi different} by 
\begin{align*}
    \bE \big[|e^{i,N}_{n} |^{2} \big]
    &\ \le
    (1+\tfrac{1}{h}) \bE \Big[    
    \left|\mathbb{E}\big[X^{i,N}_{n}
    -\Psi_i (X^{i,N}_{n-1},\mu^{X,N}_{n-1}, t_{n-1}, h) 
    \mid 
    \mathcal{F}_{t_{n-1}}\big]\right|^2  \Big]
    \\
    &\quad   +
    (1+\tfrac{1}{\eta  -1})\bE \Big[    
    \left|
    \left(\mathrm{id}-\mathbb{E}\left[\cdot \mid \mathcal{F}_{t_{n-1}}\right]\right)
    \big( 
    X^{i,N}_{n}
    -\Psi_i (X^{i,N}_{n-1},\mu^{X,N}_{n-1}, t_{n-1}, h ) \big)
    \right|^2  \Big]
    \\
    &\quad +
    (1+Ch)\bE \big[    
    |e_{n-1}^{i,N} |^{2} \big]
    +C h \bE \big[     | W^{(2)}(\hm^{X,N}_{n-1},\mu^{X,N}_{n-1} )|^2  \big].
\end{align*}
Using the fact that the particles are identically distributed
\begin{align*}
 \bE \big[    | W^{(2)}(\hm^{X,N}_{n-1},\mu^{X,N}_{n-1} )|^2 \big]
 \le \frac{1}{N}
 \sum_{j=1}^N \bE[|e_{n-1}^{j,N}|^2] 
 =
  \bE \big[   |e_{n-1}^{i,N} |^{2} \big].
\end{align*}
By induction, with $C_\eta=1+(\eta-1)^{-1}$, we have 
\begin{align*}
 \sup_{n\in \llbracket 0,M \rrbracket} \bE &\big[   
 |X^{i,N}_{n}-\hx^{i,N}_{n} |^{2}  \big]
\leq  \bE \big[    |\hx^{i,N}_{0}-\xi^i|^{2} \big]
\\
&\quad+\sum_{k=1}^{M}\left(1+h^{-1}\right)
\bE \Big[    
\left|\mathbb{E}\big[X^{i,N}_{k}-\Psi_i (X^{i,N}_{k-1},\mu^{X,N}_{k-1}, t_{k-1}, h ) \mid \mathcal{F}_{t_{k-1}}\big]\right|^{2} \Big]
\\
&\quad+C_{\eta} \sum_{k=1}^{M} 
\bE \Big[   
\left|\left(\mathrm{id}-\mathbb{E}\left[\cdot \mid \mathcal{F}_{t_{k-1}}\right]\right)\big(X^{i,N}_{k}-\Psi_i (X^{i,N}_{k-1},\mu^{X,N}_{k-1}, t_{k-1}, h )\big)\right|^{2} \Big]
\\
&\quad+ C h \sum_{k=1}^{M} \bE \big[    |X^{i,N}_{k}-\hx_{k}^{i,N} |^{2}  \big]
+  \frac{Ch}{N}\sum_{k=1}^{M} \sum_{j=1}^N 
\bE \big[    |X^{j,N}_{k}-\hx_{k}^{j,N} |^{2}\big].
\end{align*}
Taking supremum over $ i\in \llbracket 1,N \rrbracket$ and applying the discrete Gronwall's Lemma yields the result.

\end{proof}

\subsection{Proof of Theorem \ref{theorem:bc: convergence rate }}

\begin{proof}
Using Definition \ref{def:bc:def2:C-stable}, Definition \ref{def:bc:def3:B-consist}  and the result in Lemma \ref{lemma:bc: diff leq sum}, we obtain 
\begin{align*}
& \sup_{n\in \llbracket 0,M \rrbracket}\sup_{i\in \llbracket 1,N \rrbracket} \bE \big[    
 |X_{n}^{i,N}-\hx^{i,N}_{n} |^{2} \big]
\leq 
\mathrm{e}^{C T}\Bigg[\sup_{i\in \llbracket 1,N \rrbracket}
\bE \big[    |X_{0}^{i,N}-\hx^{i,N}_{0}|^{2}\big]
\\
&\quad+\sum_{k=1}^{M}\sup_{i\in \llbracket 1,N \rrbracket}  \bigg( 
\left(1+h^{-1}\right)
\bE \Big[    
\left|\mathbb{E}\big[X_{k}^{i,N}-\Psi_i\left(X_{k-1}^{i,N},\mu^{X,N}_{k-1}, t_{k-1}, h\right) \mid \mathcal{F}_{t_{k-1}}\big]\right|^{2} \Big]
\\
&\quad\qquad\qquad+C_{\eta}~ \bE \Big[    
\left|\left(\mathrm{id}-\mathbb{E}\left[\cdot \mid \mathcal{F}_{t_{k-1}}\right]\right)\big(X_{k}^{i,N}-
\Psi_i (X_{k-1}^{i,N},\mu^{X,N}_{k-1}, t_{k-1}, h )\big)\right|^{2} \Big] \bigg) \Bigg]
\\
& 
\qquad \qquad
\le C \mathrm{e}^{C T} 
 ~
\sum_{k=1}^{M} 
\Big( (1+h^{-1})h^{2+2\gamma}
+  C_{\eta} h^{1+2\gamma}
\Big) 
\le C h^{2\gamma},
\end{align*}
where in the second last estimate we used $Mh=T$.
\end{proof}

\subsection{Proof of Theorem \ref{theorem:SSM: convergence all}: Convergence of the SSM scheme} 
\subsubsection[The SSM is C-stable]{The SSM is $C$-stable}
\label{subsubsection: SSM is of C-stable }

We first need to prove  \eqref{eq: def psi is L2}, i.e., $\hx_{n+1}^{i,N} \in L^{2}\big(\Omega, \mathcal{F}_{t_n+h}, \bP  ; \mathbb{R}^{d}\big)$ for all $ n\in \llbracket 0,M-1 \rrbracket$ and $i\in \llbracket 1,N \rrbracket$ given $ \hx_{n}^{i,N}\in L^{2}\big(\Omega, \mathcal{F}_{t_n}, \bP  ; \mathbb{R}^{d}\big)$, where $\hx^{i,N}$ is constructed by the SSM scheme defined in \eqref{eq:SSTM:scheme 1} and \eqref{eq:SSTM:scheme 2}. We first provide the following useful result for the later proof.

\begin{proposition} [Summation relationship]
\label{prop:sum y square leq sum x square}
Let Assumption \ref{Ass:Monotone Assumption}  hold and choose $h$ as in \eqref{eq:h choice}. Then there exists a  constant $C>0$ such that, for all $n\in \llbracket 0,M-1 \rrbracket$,
\begin{align}\label{eq:sum y square leq sum x square}
    \frac{1}{N}\sum_{j=1}^N |Y_{n}^{j,\star,N}|^2 
    &\le  
    Ch+(1+Ch) ~\frac{1}{N}\sum_{j=1}^N  |\hx_{n}^{j,N}|^2. 
\end{align}
\end{proposition}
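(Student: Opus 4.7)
The plan is to test the implicit equation \eqref{eq:SSTM:scheme 1} against $Y_n^{i,\star,N}$, exploit the monotonicity built into $f$ and $u$, and then absorb the resulting factor $(1-Ch)^{-1}$ into a clean $(1+Ch)$ via Remark \ref{remark: choice of h}.

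First I would take the inner product of \eqref{eq:SSTM:scheme 1} with $Y_n^{i,\star,N}$ and apply Young's inequality $\langle \hx_n^{i,N}, Y_n^{i,\star,N}\rangle \le \tfrac12|\hx_n^{i,N}|^2 + \tfrac12|Y_n^{i,\star,N}|^2$ to the cross term; rearranging, summing over $i \in \llbracket 1,N\rrbracket$ and dividing by $N$ yields
\begin{align*}
\frac{1}{N}\sum_{i=1}^N |Y_n^{i,\star,N}|^2 \le \frac{1}{N}\sum_{i=1}^N |\hx_n^{i,N}|^2 + \frac{2h}{N}\sum_{i=1}^N \langle Y_n^{i,\star,N}, v(Y_n^{i,\star,N}, \hm_n^{Y,N})\rangle.
\end{align*}
I would then split $v = f\ast\hm_n^{Y,N} + u$ and treat the two contributions separately.

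For the convolution contribution, the oddness of $f$ from $(\mathbf{A}^f)$ is the one real idea of the proof: renaming the dummy indices $i \leftrightarrow j$ and using $f(-x)=-f(x)$ gives the symmetrization
\begin{align*}
\frac{1}{N^2}\sum_{i,j} \langle Y_n^{i,\star,N}, f(Y_n^{i,\star,N} - Y_n^{j,\star,N})\rangle = \frac{1}{2N^2}\sum_{i,j}\langle Y_n^{i,\star,N} - Y_n^{j,\star,N},\, f(Y_n^{i,\star,N} - Y_n^{j,\star,N})\rangle,
\end{align*}
after which the one-sided Lipschitz condition on $f$ (with $f(0)=0$) combined with $|a-b|^2 \le 2|a|^2 + 2|b|^2$ produces the bound $2 L_{(f)}^{(1),+}\tfrac{1}{N}\sum_i |Y_n^{i,\star,N}|^2$. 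For the $u$-contribution, I would apply the monotonicity inequality \eqref{eq:condition:driftoffsetdiffusion} at $(x',\mu')=(0,\delta_0)$, discard the non-negative diffusion square, and use Young's inequality on $\langle Y_n^{i,\star,N}, u(0,\delta_0)\rangle$ to obtain
\begin{align*}
\frac{1}{N}\sum_i \langle Y_n^{i,\star,N}, u(Y_n^{i,\star,N}, \hm_n^{Y,N})\rangle \le \bigl(L^{(1)}_{(u\sigma)} + L^{(2)}_{(u\sigma)}\bigr)\frac{1}{N}\sum_i |Y_n^{i,\star,N}|^2 + C,
\end{align*}
where I used the identity $\bigl(W^{(2)}(\hm_n^{Y,N},\delta_0)\bigr)^2 = \tfrac{1}{N}\sum_i |Y_n^{i,\star,N}|^2$.

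Combining the two estimates I arrive at
\begin{align*}
\Bigl(1 - 2h\bigl(2L^{(1),+}_{(f)} + L^{(1)}_{(u\sigma)} + L^{(2)}_{(u\sigma)}\bigr)\Bigr)\frac{1}{N}\sum_i |Y_n^{i,\star,N}|^2 \le \frac{1}{N}\sum_i |\hx_n^{i,N}|^2 + Ch,
\end{align*}
and the stepsize restriction \eqref{eq:h choice} guarantees the prefactor on the left is strictly positive. Dividing through and invoking the second bound of Remark \ref{remark: choice of h}, which asserts $\bigl(1 - 2h(2L^{(1),+}_{(f)} + L^{(1)}_{(u\sigma)} + L^{(2)}_{(u\sigma)})\bigr)^{-1} \le 1+Ch$ under \eqref{eq:h choice}, yields exactly \eqref{eq:sum y square leq sum x square}. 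The only step that requires genuine care rather than mechanical calculation is the symmetrization that exploits $f(-x) = -f(x)$; everything else is elementary algebra once the constants are tracked to match those appearing in \eqref{eq:h choice}.
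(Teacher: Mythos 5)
Your proof is conceptually correct and follows the natural route for this statement (the paper itself defers to an external reference rather than giving a proof in place). The symmetrization of the double sum over $i,j$ using $f(-x)=-f(x)$ together with $f(0)=0$, followed by the one-sided Lipschitz bound $\langle z, f(z)\rangle \le L^{(1)}_{(f)}|z|^2$ and $|a-b|^2\le 2|a|^2+2|b|^2$, is precisely the right idea and is the same device used repeatedly in the paper (e.g.\ in the proof of Proposition~\ref{prop: discrete second moment bound} and in Lemmas \ref{AppendixLemma-aux1} and \ref{AppendixLemma-aux2}). The Wasserstein identity $\big(W^{(2)}(\hm_n^{Y,N},\delta_0)\big)^2=\tfrac1N\sum_i|Y_n^{i,\star,N}|^2$ and the final inversion via Remark~\ref{remark: choice of h} are also used correctly.

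One small point you should track more carefully: in the $u$-step you claim
\begin{align*}
\frac{1}{N}\sum_i \langle Y_n^{i,\star,N}, u(Y_n^{i,\star,N}, \hm_n^{Y,N})\rangle \le \big(L^{(1)}_{(u\sigma)} + L^{(2)}_{(u\sigma)}\big)\frac{1}{N}\sum_i |Y_n^{i,\star,N}|^2 + C,
\end{align*}
but there is no normalization $u(0,\delta_0)=0$ in Assumption~\ref{Ass:Monotone Assumption}, so after applying \eqref{eq:condition:driftoffsetdiffusion} at $(0,\delta_0)$ you are left with the residual term $\langle Y_n^{i,\star,N}, u(0,\delta_0)\rangle$. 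Young's inequality on this term necessarily produces an extra $\tfrac{\varepsilon}{2}|Y_n^{i,\star,N}|^2$ piece for some $\varepsilon>0$, so the true coefficient in the prefactor is $1 - 2h\big(2L^{(1),+}_{(f)} + L^{(1)}_{(u\sigma)} + L^{(2)}_{(u\sigma)} + \varepsilon/2\big)$, not the clean value appearing in \eqref{eq:h choice}. As written, the constraint \eqref{eq:h choice} does not by itself guarantee positivity of this perturbed prefactor for $h$ arbitrarily close to $1/\zeta$. In practice this is harmless because the paper already flags the stepsize constraint as soft (footnote on~\eqref{eq:h choice} and the discussion in Remark~\ref{remark: choice of h}), so one simply works with $h$ bounded slightly below $1/\zeta$ and absorbs $\varepsilon$ into the constants. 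You should however make that absorption explicit rather than asserting the sharp constant, since otherwise the displayed prefactor is not what the computation actually yields.
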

\begin{proof}
See \cite[Proposition 4.4]{chen2022SuperMeasure}. 
\end{proof}

\begin{proposition}[Second order moment bounds of SSM] 
\label{prop: discrete second moment bound}
Let the setting of Theorem \ref{theorem:SSM: convergence all} hold. Then there exists a constant $C>0$ independent of $h,N,M$ such that 
\begin{align*} 
    \sup_{i\in \llbracket 1,N \rrbracket } \sup_{n\in \llbracket 0,M \rrbracket }
    \bE \big[ |\hx_{n}^{i,N}|^{2}  \big]
    +
     \sup_{i\in \llbracket 1,N \rrbracket } \sup_{n\in \llbracket 0,M-1 \rrbracket }
    \bE \big[ |Y_{n}^{i,\star,N}|^{2}  \big] 
    &\le C \big(  1+ \bE\big[\, |\hx_{0}^{N}|^{2}\big] \big). 
\end{align*}

\end{proposition}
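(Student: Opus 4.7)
The plan is to combine the implicit equation \eqref{eq:SSTM:scheme 1} and the explicit update \eqref{eq:SSTM:scheme 2}, average over the $N$ particles, and exploit the monotonicity conditions of Assumption \ref{Ass:Monotone Assumption} to absorb the super-linear convolution terms. A discrete Gronwall argument will propagate through the time-steps, and the conversion between the $Y$-bound and the $\hx$-bound will use Proposition \ref{prop:sum y square leq sum x square}.

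First, I would square the explicit update \eqref{eq:SSTM:scheme 2} and take conditional expectation given $\mathcal{F}_{t_n}$. Since $\Delta W_n^i$ is independent of $\mathcal{F}_{t_n}$ and mean-zero, only a diagonal term remains, yielding
\begin{align*}
\bE\big[|\hx_{n+1}^{i,N}|^2 \mid \mathcal{F}_{t_n}\big]
= |Y_n^{i,\star,N}|^2
+ 2h \langle Y_n^{i,\star,N}, b(t_n, Y_n^{i,\star,N}, \hm^{Y,N}_n)\rangle
+ h^2 |b(t_n, Y_n^{i,\star,N}, \hm^{Y,N}_n)|^2
+ h |\hs(t_n, Y_n^{i,\star,N}, \hm^{Y,N}_n)|^2.
\end{align*}
Next, from the implicit step \eqref{eq:SSTM:scheme 1}, I would take the inner product with $Y_n^{i,\star,N}$ and invoke Young's inequality to obtain
$|Y_n^{i,\star,N}|^2 \le |\hx_n^{i,N}|^2 + 2h \langle Y_n^{i,\star,N}, v(Y_n^{i,\star,N}, \hm^{Y,N}_n)\rangle$. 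Substituting this into the previous display, averaging over $i\in\llbracket 1,N\rrbracket$, and taking expectation, the estimate reduces to controlling
\begin{align*}
\frac{2h}{N}\sum_{i=1}^N \bE\big[\langle Y_n^{i,\star,N}, v+b\rangle\big]
+\frac{h^2}{N}\sum_{i=1}^N \bE\big[|b|^2\big]
+\frac{h}{N}\sum_{i=1}^N \bE\big[|\hs|^2\big].
\end{align*}

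The critical part is the joint handling of the super-linear convolutions. For the $u$-$\sigma$ pair, Remark \ref{remark:ImpliedProperties} gives $\langle x, u(x,\mu)\rangle + (m{-}1)|\sigma(t,x,\mu)|^2 \le C(1+|x|^2+(W^{(2)}(\mu,\delta_0))^2)$, which since $m\ge 2$ absorbs $2h|\sigma|^2$ against $2h\langle Y,u\rangle$. For the $f$-$\fs$ pair, I would exploit the oddness of $f$ (assumption $(\mathbf{A}^f,~\mathbf{A}^{\fs})$) to symmetrize
\begin{align*}
\tfrac{1}{N}\sum_i \langle Y_n^{i,\star,N}, (f*\hm^{Y,N}_n)(Y_n^{i,\star,N})\rangle
=
\tfrac{1}{2N^2}\sum_{i,j} \langle Y_n^{i,\star,N}-Y_n^{j,\star,N}, f(Y_n^{i,\star,N}-Y_n^{j,\star,N})\rangle,
\end{align*}
and bound $|(\fs*\hm^{Y,N}_n)(Y_n^{i,\star,N})|^2 \le \frac1N\sum_j |\fs(Y_n^{i,\star,N}-Y_n^{j,\star,N})|^2$ by Jensen. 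Then the monotonicity $\langle y,f(y)\rangle + 2(m{-}1)|\fs(y)|^2\le L^{(1)}_{(f)}|y|^2$ (with $m\ge 2$ ensuring $2(m-1)\ge 2$) turns the paired contribution into at most $\tfrac{2hL^{(1),+}_{(f)}}{N}\sum_i |Y_n^{i,\star,N}|^2$ via $\tfrac{1}{N^2}\sum_{i,j}|Y^i-Y^j|^2\le \tfrac{2}{N}\sum_i|Y^i|^2$. Growth bounds from Remark \ref{remark:ImpliedProperties} dispatch the $b$-terms. Collected, I expect
\begin{align*}
\tfrac{1}{N}\sum_i \bE\big[|\hx_{n+1}^{i,N}|^2\big]
\le \tfrac{1}{N}\sum_i \bE\big[|\hx_n^{i,N}|^2\big]
+ C h\Big(1 + \tfrac{1}{N}\sum_i \bE\big[|Y_n^{i,\star,N}|^2\big]\Big).
\end{align*}
Applying \eqref{eq:sum y square leq sum x square} of Proposition \ref{prop:sum y square leq sum x square} to replace the $Y$-average by a $\hx$-average, then the discrete Gronwall lemma, yields a uniform bound on $\tfrac{1}{N}\sum_i \bE[|\hx_n^{i,N}|^2]$. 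Since the particles are exchangeable (i.i.d.~initial data and coefficient-symmetric drift/diffusion), this average equals $\bE[|\hx_n^{i,N}|^2]$ for any $i$, giving the claimed supremum. The estimate for $\bE[|Y_n^{i,\star,N}|^2]$ then follows by one more application of Proposition \ref{prop:sum y square leq sum x square}.

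The main obstacle is precisely the simultaneous super-linearity of $f$ inside the drift's convolution and of $\fs$ inside the diffusion's convolution: neither can be tamed independently under the $L^2$-moment target. The implicit/explicit split is what enables the pairing, while the symmetrization via oddness of $f$ converts the single-particle convolution into a pairwise sum amenable to the joint monotonicity $(\mathbf{A}^f,~\mathbf{A}^{\fs})$. The fact that we are only after $L^2$-bounds (rather than $L^p$ for $p>2$) is what avoids any appeal to the `additional symmetry' condition here.
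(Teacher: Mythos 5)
Your proposal is correct and follows essentially the same route as the paper's proof: you square the explicit update and take conditional expectation to isolate the diagonal diffusion term, use the implicit step to bound $|Y_n^{i,\star,N}|^2$ by $|\hx_n^{i,N}|^2+2h\langle Y_n^{i,\star,N},v\rangle$, symmetrize the drift-and-diffusion convolution contributions via the oddness of $f$ so that the joint $(\mathbf{A}^f,\mathbf{A}^{\fs})$ monotonicity applies, invoke Proposition~\ref{prop:sum y square leq sum x square} to convert $Y$-averages back to $\hx$-averages, and close with discrete Gronwall. The only cosmetic difference is that you perform the symmetrization by averaging over $i$ pathwise, whereas the paper carries it out per particle under the expectation using exchangeability; both are equivalent, and you correctly identify that only $L^2$-targets are needed so the ``additional symmetry'' condition plays no role here.
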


\begin{proof}
    The proof is similar to \cite[Section 4.1]{chen2022SuperMeasure}. By Assumption \ref{Ass:Monotone Assumption},   Proposition \ref{prop:sum y square leq sum x square}  
    , and the fact that the particles are identically distributed, we deduce that there exists a constant $C>0$ such that for any $i\in\llbracket 1,N\rrbracket$, $n\in\llbracket 0,M-1\rrbracket$
    \color{black}  		
    \begin{align*}
    \bE \big[ 1+|Y_{n}^{i,\star,N}|^2 \big] 
    & = \frac1N \sum_{j=1}^N\bE \big[ 1+|Y_{n}^{j,\star,N}|^2 \big] 
    \\& 
    \leq 
    1+Ch+(1+Ch) \frac1N \sum_{j=1}^N\bE \big[ |\hx_{n}^{j,N}|^2 \big]
    \leq 
    (1+Ch) \bE\big[  1+|\hx_{n}^{i,N}|^2 \big] .
\end{align*}
 \color{black}	  %
From  \eqref{eq:SSTM:scheme 1} and Jensen's inequality, we have
\begin{align*}
\nonumber
      |Y_{n}^{i,\star,N}|^2
      &=
      \big\langle Y_{n}^{i,\star,N}, \hx_{n}^{i,N}+h v (Y_{n}^{i,\star,N},\hm^{Y,N}_n ) \big\rangle
      \\
      &\leq 
      \frac{1}{2} |Y_{n}^{i,\star,N}|^2 + \frac12  |\hx_{n}^{i,N}|^2
      +
      h \big\langle Y_{n}^{i,\star,N},   v (Y_{n}^{i,\star,N},\hm^{Y,N}_n ) \big\rangle,
\end{align*}
and hence,
     \begin{align} \label{eq: proof of y2 leq x2+yvh}
          |Y_{n}^{i,\star,N}|^2
      &\leq
      |\hx_{n}^{i,N}|^2 
      + 2 h \big\langle Y_{n}^{i,\star,N},   v (Y_{n}^{i,\star,N},\hm^{Y,N}_n ) \big\rangle.
\end{align}
Also, from \eqref{eq:SSTM:scheme 2} and using the result above, we have 
\begin{align*}
      |\hx_{n+1}^{i,N}|^2    =
      \big|Y_{n}^{i,\star,N} + b(t_n,Y_{n}^{i,\star,N},\hm^{Y,N}_n) h
        +\hs(t_n,Y_{n}^{i,\star,N},\hm^{Y,N}_n) \Delta W_{n}^i  \big|^2.
\end{align*}
Taking expectation on both sides, by Jensen's inequality,  \eqref{eq: proof of y2 leq x2+yvh}, Assumption \ref{Ass:Monotone Assumption} and \eqref{eq: remark eq1}-\eqref{eq: remark eq3} in Remark \ref{remark:ImpliedProperties}, we have 
\begin{align*}
 \bE \big[& 1+|\hx_{n+1}^{i,N}|^2\big]
 \\
 & \leq (1+Ch)
 \bE \big[ 1+|\hx_{n}^{i,N}|^2\big] 
 + h
 \bE \Big[   2 \big\langle Y_{n}^{i,\star,N},   v (Y_{n}^{i,\star,N},\hm^{Y,N}_n ) \big\rangle   
 +      |\hs(t_n,Y_{n}^{i,\star,N},\hm^{Y,N}_n) |^2
 \Big]
 \\
 &\leq   (1+Ch) \bE \big[ 1+|\hx_{n}^{i,N}|^2\big]
 + 2 h \bE \Big[    \big\langle Y_{n}^{i,\star,N},   u (Y_{n}^{i,\star,N},\hm^{Y,N}_n ) \big\rangle   
 +      |\sigma(t_n,Y_{n}^{i,\star,N},\hm^{Y,N}_n) |^2
 \Big]
 \\
 & \quad
 +    \frac{h}{N} \sum_{j=1}^N \bE   \Big[    \big\langle Y_{n}^{i,\star,N} - Y_{n}^{j,\star,N},   f (Y_{n}^{i,\star,N}  - Y_{n}^{j,\star,N} ) \big\rangle   
 +      2|\fs (Y_{n}^{i,\star,N}  - Y_{n}^{j,\star,N} ) |^2
 \Big]
 \\
 &\leq   (1+Ch) \bE \big[ 1+|\hx_{n}^{i,N}|^2\big]
 + C h \bE \Big[ 1+ |Y_{n}^{i,\star,N}|^2+ \frac{1}{N} \sum_{j=1}^N|Y_{n}^{j,\star,N}|^2  \Big]
  \\
 &\quad + \frac{Ch}{N} \sum_{j=1}^N \bE \Big[  |Y_{n}^{i,\star,N}-Y_{n}^{j,\star,N}|^2  \Big] 
  \\
 &\leq   (1+Ch) \bE \big[ 1+|\hx_{n}^{i,N}|^2\big]
 + C h \bE \Big[ 1+ |Y_{n}^{i,\star,N}|^2+ \frac{1}{N} \sum_{j=1}^N|Y_{n}^{j,\star,N}|^2  \Big]
 \\
 &\leq   (1+Ch) \bE \big[ 1+|\hx_{n}^{i,N}|^2\big].
\end{align*} 
\end{proof}
Proposition \ref{prop: discrete second moment bound}  shows that the one-step map of the SSM, $\Psi=(\Psi_1,\ldots,\Psi_N) $ in Definition \ref{def:bc:def1:scheme}, $ \Psi_i(\hx_{n}^{i,N},\hm^{X,N}_n, t_n, h)=\hx_{n+1}^{i,N}$ is indeed an $L^2$-operator. We now prove the SSM is $C$-stable.
\begin{proof}[Proof of statement 1 in Theorem \ref{theorem:SSM: convergence all}]

We use \eqref{eq:SSTM:scheme 1} and \eqref{eq:SSTM:scheme 2} to define the mapping $\Psi=(\Psi_1,\ldots,\Psi_N)$  and consequently to generate the following two processes $\hx_{n}^{i,N}$ and $\hz_{n}^{i,N}$
for all $i\in \llbracket 1,N \rrbracket$, $n\in \llbracket 0,M-1 \rrbracket $, with the corresponding empirical measures $\hm^{X,N}_n,~\hm^{Z,N}_n \in \cP_2(\bR^d)$ and $\Delta W_{n}^i= W_{t_{n+1}}^i-W_{t_{n}}^i$
\begin{align*}
Y_{n}^{i,X,N} &=\hx_{n}^{i,N}+h v (Y_{n}^{i,X,N},\hm^{Y,X,N}_n ),  
\quad \quad 
  \hm^{Y,X,N}_n(\dd x):= \frac1N \sum_{j=1}^N \delta_{Y_{n}^{j,X,N}}(\dd x)
  ,
 \\ 
\hx_{n+1} ^{i,N} &=Y_{n}^{i,X,N}
            + b(t_n,Y_{n}^{i,X,N},\hm^{Y,X,N}_n) h
            +\hs(t_n,Y_{n}^{i,X,N},\hm^{Y,X,N}_n) \Delta W_{n}^i,
\\
Y_{n}^{i,Z,N} &=\hz_{n}^{i,N}+h v (Y_{n}^{i,Z,N},\hm^{Y,Z,N}_n ),  
\quad \quad
  \hm^{Y,Z,N}_n(\dd x):= \frac1N \sum_{j=1}^N \delta_{Y_{n}^{j,Z,N}}(\dd x),
 \\ 
\hz_{n+1}^{i,N} &=Y_{n}^{i,Z,N}
            + b(t_n,Y_{n}^{i,Z,N},\hm^{Y,Z,N}_n) h
            +\hs(t_n,Y_{n}^{i,Z,N},\hm^{Y,Z,N}_n) \Delta W_{n}^i.
\end{align*}
Thus, $\hx_{n+1}^{i,N}= \Psi_i(\hx_{n}^{i,N},\hm^{X,N}_n, t_n, h)$ and $\hz_{n+1}^{i,N}= \Psi_i(\hz_{n}^{i,N},\hm^{Z,N}_n, t_n, h)$.
We need to prove
\begin{align}
\notag 
\bE \Big[    
\Big|\mathbb{E}\big[ & \Psi_i(\hx_{n}^{i,N},\hm^{X,N}_n, t_n, h)-\Psi_i(\hz_{n}^{i,N},\hm^{Z,N}_n, t_n, h) \mid \mathcal{F}_{t_n}\big]\Big|^{2} \Big]
\\
\label{eq:c-stable proof: second term }
+&
\eta~\bE \Big[    \left|\left(\mathrm{id}-\mathbb{E}\left[\cdot \mid \mathcal{F}_{t}\right]\right)
(\Psi_i(\hx_{n}^{i,N},\hm^{X,N}_n, t_n, h)-\Psi_i(\hz_{n}^{i,N},\hm^{Z,N}_n, t_n, h))\right|^{2} \Big]
\\
\leq&
\left(1+C h\right) \bE \big[    
|\hx_{n}^{i,N}-\hz_{n}^{i,N}|^{2} \big]
+Ch \bE \big[   | W^{(2)}(\hm^{X,N}_n,\hm^{Z,N}_n )|^2 \big]. \nonumber
\end{align}
For the first term in \eqref{eq:c-stable proof: second term }, {\color{black} note that the Brownian motion $W_t$ is $\cF_t$-measurable, }using the Lipschitz continuity of $b$, we get 
\begin{align*}  
 \bE \Big[    &
\left|\mathbb{E}\big[\Psi_i(\hx_{n}^{i,N},\hm^{X,N}_n, t_n, h)-\Psi_i(\hz_{n}^{i,N},\hm^{Z,N}_n, t_n, h) \mid \mathcal{F}_{t_n}\big]\right|^{2} \Big]
\\
& =   \bE \big[        
\big|
Y_{n}^{i,X,N}+ b(t_n,Y_{n}^{i,X,N},\hm^{Y,X,N}_n) h
-Y_{n}^{i,Z,N}- b(t_n,Y_{n}^{i,Z,N},\hm^{Y,Z,N}_n) h
\big| ^{2}\big]
\\
& \le (1+Ch) \bE \big[        
 |
Y_{n}^{i,X,N}-Y_{n}^{i,Z,N}
 | ^{2}\big] 
+
Ch\bE \big[  
  |   
 W^{(2)}(\hm^{Y,X,N}_n,\hm^{Y,Z,N}_n )
  |^{2} \big].
\end{align*}
\color{black}
From Lemma \ref{remark:OSL for the whole function / system V}, we observe that 
\begin{align*}
|&
Y_{n}^{i,X,N}-Y_{n}^{i,Z,N}|^2 
\\
\quad &
= \big\langle 
Y_{n}^{i,X,N}-Y_{n}^{i,Z,N}, \hx_{n}^{i,N}-\hz_{n}^{i,N}+
v(Y_{n}^{i,X,N},\hm^{Y,X,N}_n) h-v(Y_{n}^{i,Z,N},\hm^{Y,Z,N}_n ) h
\big\rangle 
\\\nonumber
\quad &\le \frac{1}{2} \big( |Y_{n}^{i,X,N}-Y_{n}^{i,Z,N}|^2 +  |\hx_{n}^{i,N}-\hz_{n}^{i,N}|^2 \big)
\\\nonumber
&\qquad + h\big\langle 
Y_{n}^{i,X,N}-Y_{n}^{i,Z,N}, 
v(Y_{n}^{i,X,N},\hm^{Y,X,N}_n) -v(Y_{n}^{i,Z,N},\hm^{Y,Z,N}_n )  
\big\rangle,
\nonumber
\end{align*}
and therefore
\begin{align}
\nonumber 
|&
Y_{n}^{i,X,N}-Y_{n}^{i,Z,N}|^2  
\\ \nonumber 
&\quad
\le |\hx_{n}^{i,N}-\hz_{n}^{i,N}|^2  + 2h\big\langle 
Y_{n}^{i,X,N}-Y_{n}^{i,Z,N}, 
v(Y_{n}^{i,X,N},\hm^{Y,X,N}_n) -v(Y_{n}^{i,Z,N},\hm^{Y,Z,N}_n )  
\big\rangle \nonumber
\\ \nonumber 
&\quad \le
|\hx_{n}^{i,N}-\hz_{n}^{i,N}|^2 
\\ \nonumber 
&\qquad + 2h \big\langle 
Y_{n}^{i,X,N}-Y_{n}^{i,Z,N}, 
\frac{1}{N} \sum_{j=1}^N \big( f(Y_{n}^{i,X,N}-Y_{n}^{j,X,N} )- f(Y_{n}^{i,Z,N}-Y_{n}^{j,Z,N} )  \big) 
\big\rangle  \nonumber
\\
&\qquad +
2h\big\langle 
Y_{n}^{i,X,N}-Y_{n}^{i,Z,N}, 
u(Y_{n}^{i,X,N},\hm^{Y,X,N}_n)  -u(Y_{n}^{i,Z,N},\hm^{Y,Z,N}_n ) 
\big\rangle.  \label{eq:c-stable:y-y: u}
\end{align}
\color{black}  	
For  the second term in \eqref{eq:c-stable proof: second term }, by Jensen's inequality, we have
\begin{align*}    	
&
 \bE \Big[        
\left|\left(\mathrm{id}-\mathbb{E}\left[\cdot \mid \mathcal{F}_{t_n}\right]\right)
(\Psi_i(\hx_{n}^{i,N},\hm^{X,N}_n, t_n, h)-\Psi_i(\hz_{n}^{i,N},\hm^{Z,N}_n, t_n, h))\right|^{2}\Big]
\\
&= \bE \big[       
|  
\hs(t_n,Y_{n}^{i,X,N},\hm^{Y,X,N}_n) \Delta W_{n}^i
-
\hs(t_n,Y_{n}^{i,Z,N},\hm^{Y,Z,N}_n) \Delta W_{n}^i
|^2  \big]
\\
&\leq 
2h \bE \Big[       
|  
\sigma(t_n,Y_{n}^{i,X,N},\hm^{Y,X,N}_n)  
-
\sigma(t_n,Y_{n}^{i,Z,N},\hm^{Y,Z,N}_n)  
|^2   
\\
&\quad +
\frac{1}{N} \sum_{j=1}^N | \fs(Y_{n}^{i,X,N}-Y_{n}^{j,X,N} )- \fs(Y_{n}^{i,Z,N}-Y_{n}^{j,Z,N} )  |^2
\Big].
\end{align*}
From Assumption \ref{Ass:Monotone Assumption} 
 and \eqref{eq: remark eq1}  in Remark \ref{remark:ImpliedProperties}, we derive, for some $\eta>1$,
\begin{align}
\nonumber
   &\bE\Big[ \big\langle 
Y_{n}^{i,X,N}-Y_{n}^{i,Z,N}, 
\frac{1}{N} \sum_{j=1}^N \big( f(Y_{n}^{i,X,N}-Y_{n}^{j,X,N} )- f(Y_{n}^{i,Z,N}-Y_{n}^{j,Z,N} )  \big) \big\rangle\Big]
\\\nonumber
& \quad + \eta
\bE \Big[       
\frac{1}{N} \sum_{j=1}^N | \fs(Y_{n}^{i,X,N}-Y_{n}^{j,X,N} )- \fs(Y_{n}^{i,Z,N}-Y_{n}^{j,Z,N} )  |^2
\Big]
\\\nonumber
& =
\frac{1}{2 N^2}  \sum_{i=1}^N \sum_{j=1}^N\bE\Big[ \big\langle 
(Y_{n}^{i,X,N}-Y_{n}^{j,X,N} )- (Y_{n}^{i,Z,N}-Y_{n}^{j,Z,N} ), 
\\\nonumber
& \qquad \qquad \qquad \qquad \qquad 
   f(Y_{n}^{i,X,N}-Y_{n}^{j,X,N} )- f(Y_{n}^{i,Z,N}-Y_{n}^{j,Z,N} )  \big\rangle \Big]
\\\nonumber
& \quad  + \frac{\eta }{N^2} \sum_{i=1}^N \sum_{j=1}^N
\bE \Big[       
  | \fs(Y_{n}^{i,X,N}-Y_{n}^{j,X,N} )- \fs(Y_{n}^{i,Z,N}-Y_{n}^{j,Z,N} )  |^2
\Big]
   \\ 
   \nonumber 
&\leq 
\frac{1}{2 N^2}  \sum_{i=1}^N \sum_{j=1}^N\bE\Big[  L^{(1)}_{(f)} \big|(Y_{n}^{i,X,N}-Y_{n}^{j,X,N} )- (Y_{n}^{i,Z,N}-Y_{n}^{j,Z,N} )\big|^2 \Big]
 \\ 
   \label{eq: c-stable y-y f-f}
&\leq  2L^{(1),+}_{(f)} \bE\big[  |Y_{n}^{i,X,N}-Y_{n}^{i,Z,N}|^2 \big].
\end{align}
\color{black}
Collecting the above estimates and using \eqref{eq: remark eq4}-\eqref{eq: remark eq5} in Remark \ref{remark:ImpliedProperties}, we have 
\begin{align}
&  \bE \Big[      
\left|\mathbb{E}\big[\Psi_i(\hx_{n}^{i,N},\hm^{X,N}_n, t_n, h)-\Psi_i(\hz_{n}^{i,N},\hm^{Z,N}_n, t_n, h) \mid \mathcal{F}_{t}\big]\right|^{2}  \Big] 
\nonumber
\\
& \quad +    
\eta
\bE \Big[     \left|\left(\mathrm{id}-\mathbb{E}\left[\cdot \mid \mathcal{F}_{t}\right]\right)
\big(\Psi_i(\hx_{n}^{i,N},\hm^{X,N}_n, t_n, h)-\Psi_i(\hz_{n}^{i,N},\hm^{Z,N}_n, t_n, h)\big)\right|^{2} \Big]
\nonumber
\\
\leq& \bE \Big[        
|\hx_{n}^{i,N}-\hz_{n}^{i,N}|^2 + 
4L^{(1),+}_{(f)} h|Y_{n}^{i,X,N}-Y_{n}^{i,Z,N}|^2  
\nonumber 
\\
&\quad +  	2\eta h   
|  
\sigma(t_n,Y_{n}^{i,X,N},\hm^{Y,X,N}_n)  
-
\sigma(t_n,Y_{n}^{i,Z,N},\hm^{Y,Z,N}_n)  
|^2   
\nonumber
\\
&\quad+ 
2h\langle 
Y_{n}^{i,X,N}-Y_{n}^{i,Z,N}, 
u(Y_{n}^{i,X,N},\hm^{Y,X,N}_n)  -u(Y_{n}^{i,Z,N},\hm^{Y,Z,N}_n ) 
\rangle \Big] (1+Ch)
\nonumber
\\
\nonumber 
\le
& (1+Ch) \bE \big[  |\hx_{n}^{i,N}-\hz_{n}^{i,N}|^2      \big]
\\
\label{eq:proof 1 final result}
&\qquad \qquad 
+ Ch \left(
\bE \big[ |Y_{n}^{i,X,N}-Y_{n}^{i,Z,N}|^2      \big] + 
\bE \big[ | W^{(2)}(\hm^{Y,X,N}_n, \hm^{Y,Z,N}_n)|^2      \big] \right),
\end{align}
\color{black} 
where we used that the particles are identically distributed and the following inequality:  for $\eta\in \big(1,2(m-1)\big)$, we have 
\color{black} 
\begin{align}
\nonumber
  \bE \Big[  &       \langle 
Y_{n}^{i,X,N}-Y_{n}^{i,Z,N}, 
u(Y_{n}^{i,X,N},\hm^{Y,X,N}_n)  -u(Y_{n}^{i,Z,N},\hm^{Y,Z,N}_n ) 
\rangle 
\\
\nonumber 
&\qquad \qquad \qquad+ 
\eta h   
|  
\sigma(t_n,Y_{n}^{i,X,N},\hm^{Y,X,N}_n)  
-
\sigma(t_n,Y_{n}^{i,Z,N},\hm^{Y,Z,N}_n)  
|^2   \Big] 
\\
\label{eq:se1:y-y}%
&\le    (L^{(1)}_{(u\sigma)} +  L^{(2)}_{(u\sigma)} ) \bE \big[ |Y_{n}^{i,X,N}-Y_{n}^{i,Z,N}|^2  \big] .
\end{align}
Substituting the estimates from above   into \eqref{eq:c-stable:y-y: u}, and take  Remark \ref{remark: choice of h} into account, we get 
\begin{align*}%
    \bE \big[ |Y_{n}^{i,X,N}-Y_{n}^{i,Z,N}|^2  \big] 
    \le (1+Ch)
    \bE \big[|\hx_{n}^{i,N}-\hz_{n}^{i,N}|^2   \big] .
\end{align*}
Further, we note that 
\begin{align*}    	
\bE \big[    |    
 W^{(2)}(\hm^{Y,X,N}_n,\hm^{Y,Z,N}_n )
 |^{2}\big]
\le    
\frac{1}{N} \sum_{j=1}^N \bE [| Y_{n}^{j,X,N}-&Y_{n}^{j,Z,N} |^2]  
\le (1+Ch)
\bE \big[     
|\hx_{n}^{i,N}-\hz_{n}^{i,N} |^2\big].
\end{align*}
Substituting these estimates in \eqref{eq:proof 1 final result}, allows one to deduce the claim.
\end{proof}

\subsubsection[The SSM is B-consistent]{The SSM is $B$-consistent}
\label{section: proof of B-consistent}
We first state the following auxiliary results and recall  that the constant $C$  is  positive and independent of $h,N,M$.
\begin{proposition} [Difference relationship]
\label{prop:yi-yj leq xi-xj}
Let Assumption \ref{Ass:Monotone Assumption} hold and choose $h$ as in \eqref{eq:h choice}. For any $n\in \llbracket 0,M \rrbracket$, let $Y^{\star,N}_n$ defined as in \eqref{eq:SSTM:scheme 0} and \eqref{eq:SSTM:scheme 1}. Then, there exists a constant $C>0$ such that for all $i,~j\in \llbracket 1,N \rrbracket$,  
\begin{align}\label{eq:prop:yi-yj leq xi-xj}
    |Y_{n}^{i,\star,N}-Y_{n}^{j,\star,N}|^2 
    &\le 
     \frac{1}{1-2(L^{(1)}_{(f)}+L^{(1)}_{(u\sigma)})h}  | \hx_{n}^{i,N}-\hx_{n}^{j,N}|^2  \le (1+Ch) | \hx_{n}^{i,N}-\hx_{n}^{j,N}|^2.
\end{align}
\end{proposition}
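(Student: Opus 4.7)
The plan is to exploit the fact that both $Y_n^{i,\star,N}$ and $Y_n^{j,\star,N}$ solve the implicit step \eqref{eq:SSTM:scheme 1} against the \emph{same} empirical measure $\hm^{Y,N}_n$, so that the measure-dependent contribution of $v$ cancels when we subtract. Concretely, from \eqref{eq:SSTM:scheme 1} I would write
\begin{align*}
Y_{n}^{i,\star,N}-Y_{n}^{j,\star,N}
= (\hx_{n}^{i,N}-\hx_{n}^{j,N})
+ h\bigl( v(Y_{n}^{i,\star,N},\hm^{Y,N}_n)-v(Y_{n}^{j,\star,N},\hm^{Y,N}_n)\bigr).
\end{align*}

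Next I would take the inner product of this identity with $Y_{n}^{i,\star,N}-Y_{n}^{j,\star,N}$. The drift term is handled by the one-sided Lipschitz bound for $v(\cdot,\mu)$ collected in Remark \ref{remark:ImpliedProperties}, namely
\begin{align*}
\langle Y_{n}^{i,\star,N}-Y_{n}^{j,\star,N},\, v(Y_{n}^{i,\star,N},\hm^{Y,N}_n)-v(Y_{n}^{j,\star,N},\hm^{Y,N}_n)\rangle
\leq (L^{(1)}_{(f)}+L^{(1)}_{(u\sigma)})\,|Y_{n}^{i,\star,N}-Y_{n}^{j,\star,N}|^2,
\end{align*}
while Young's inequality gives
\begin{align*}
\langle Y_{n}^{i,\star,N}-Y_{n}^{j,\star,N},\,\hx_{n}^{i,N}-\hx_{n}^{j,N}\rangle
\leq \tfrac12 |Y_{n}^{i,\star,N}-Y_{n}^{j,\star,N}|^2 + \tfrac12 |\hx_{n}^{i,N}-\hx_{n}^{j,N}|^2.
\end{align*}
Combining, rearranging, and dividing by $1/2 - h(L^{(1)}_{(f)}+L^{(1)}_{(u\sigma)}) > 0$ (which is positive by the stepsize restriction \eqref{eq:h choice}) yields
\begin{align*}
|Y_{n}^{i,\star,N}-Y_{n}^{j,\star,N}|^2 \leq \frac{1}{1-2(L^{(1)}_{(f)}+L^{(1)}_{(u\sigma)})h}\,|\hx_{n}^{i,N}-\hx_{n}^{j,N}|^2.
\end{align*}

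Finally, the second inequality $(1+Ch)|\hx_{n}^{i,N}-\hx_{n}^{j,N}|^2$ follows from the elementary bound on the prefactor $1/(1-2(L^{(1)}_{(f)}+L^{(1)}_{(u\sigma)})h)$ already recorded in Remark \ref{remark: choice of h}.

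There is no real obstacle in this proof; the essential observation is the cancellation of the common measure argument in the implicit step, which reduces the problem to a one-sided Lipschitz estimate for $v$ at fixed measure — the full force of the ``additional symmetry'' assumption is not required here.
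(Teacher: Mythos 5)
Your proof is correct, and it is essentially the argument one expects here (the paper itself only cites Proposition 4.3 of \cite{chen2022SuperMeasure} rather than reproducing the proof). The key observation — that both $Y_n^{i,\star,N}$ and $Y_n^{j,\star,N}$ are defined against the \emph{same} empirical measure $\hm^{Y,N}_n$, so subtraction reduces the problem to a one-sided Lipschitz estimate for $v(\cdot,\mu)$ at fixed $\mu$ — is exactly right, and the bookkeeping through Young's inequality produces the prefactor $\bigl(1-2(L^{(1)}_{(f)}+L^{(1)}_{(u\sigma)})h\bigr)^{-1}$ in the stated form. You are also correct that the ``additional symmetry'' condition plays no role for this pathwise, $L^2$-level estimate. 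One minor remark: you could avoid Young's inequality by bounding the right-hand side with Cauchy--Schwarz and cancelling one factor of $|Y_n^{i,\star,N}-Y_n^{j,\star,N}|$, which yields the slightly sharper prefactor $\bigl(1-(L^{(1)}_{(f)}+L^{(1)}_{(u\sigma)})h\bigr)^{-2}$; since $(1-a)^{-2}\le (1-2a)^{-1}$ for $a\in[0,1/2)$, this is consistent with, and stronger than, the claimed bound, but the Young's-inequality route matches the proposition's constant exactly and is what was intended.
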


\begin{proof}
See \cite[Proposition 4.3]{chen2022SuperMeasure}.
\end{proof}

Now, we state the following moment relationship for the first  step of the SSM.

\begin{proposition} [Moment relationship]
\label{prop:  y  leq   x }
Let Assumption \ref{Ass:Monotone Assumption}  hold and choose $h$ as in \eqref{eq:h choice}, then there exist a constant $C>0$ independent of $N$,   
such that for all $i\in \llbracket 1,N \rrbracket,~n\in \llbracket 0,M \rrbracket,~p\geq 1 $ we have 
\begin{align*} 
   \bE\big[|Y_n^{i, \star ,N}|^{2p}\big] 
    &\le C \Big( 
      \frac{1}{N} \sum_{j=1}^N \bE[|X_{n}^{i,N}- X_{n}^{j,N} |^{2p} ]
    +
      \bE \Big[     \Big|\frac{1}{N}\sum_{j=1}^N ( 1+  |X_{n}^{j,N}|^2) \Big|^{p}     \Big]+1 \Big). 
\end{align*}
\end{proposition}

\begin{proof}
By Young's inequality and Jensen's inequality 
\begin{align*}
     \bE\big[ |Y_{n}^{i,\star,N} |^{2p}  \big]  
    &\le  \bE \Big[ \Big|      \frac{1}{N} \sum_{j=1}^N 
    \Big(
    2|Y_{n}^{i,\star,N}- Y_{n}^{j,\star,N} |^{2}  
    +
    2 |Y_{n}^{j,\star,N}|^{2}    \Big) \Big|^p\Big]
    \\ 
    &\le \frac{4^p}{N}  \sum_{j=1}^N\bE \big[        |Y_{n}^{i,\star,N}- Y_{n}^{j,\star,N} |^{2p} \big]
    +
    4^p \bE \Big[     \Big| \frac{1}{N} \sum_{j=1}^N    |Y_{n}^{j,\star,N}|^{2}    \Big|^{p}  \Big].
\end{align*}
Combining Propositions \ref{prop:yi-yj leq xi-xj} and \ref{prop:sum y square leq sum x square} allows to conclude the claim.

\end{proof}

The main goal of this section is to prove that $X_{t}^{i,N}$ defined by \eqref{Eq:MV-SDE Propagation} satisfies for all $t\in[0,T]$, $i\in \llbracket 1,N\rrbracket$  the following estimates with $\gamma=1/2$. 
\begin{align}
\label{eq:b-consistency:term 1}
\bE \Big[    
\left|\mathbb{E}\big[X_{t+h}^{i,N}-\Psi_i(X_t^{i,N},\mu^{X,N}_{t}, t, h) \mid \mathcal{F}_{t}\big]\right|^2 \Big]
&\leq C h^{2\gamma+2},
\\
\label{eq:b-consistency:term 2}
\quad\bE \Big[    \left|\left(\mathrm{id}-\mathbb{E}\left[\cdot \mid \mathcal{F}_{t}\right]\right)(X_{t+h}^{i,N}-\Psi_i(X_t^{i,N},\mu^{X,N}_{t}, t, h))\right|^2 \Big]
&\leq C  h^{2\gamma+1}.
\end{align}
 
\begin{proof}[Proof of statement 2 in Theorem \ref{theorem:SSM: convergence all}]
Recall \eqref{Eq:MV-SDE Propagation} and the SSM given in \eqref{eq:SSTM:scheme 0}-\eqref{eq:SSTM:scheme 2}. Then, we introduce the following quantities, for all $t\in[0,T]$, $i\in \llbracket 1,N\rrbracket$,  
\begin{align}
    &X_{t+h}^{i,N}=X_{t}^{i,N}+ \int_t^{t+h} \Big( v( X_{s}^{i,N},\mu_s^{X,N})
    +b(s,X_{s}^{i,N},\mu_s^{X,N}) \Big) \dd s
    +
    \int_t^{t+h} \hs(s,X_{s}^{i,N},\mu_s^{X,N}) \dd W_s^i,
    \label{eq:b-consistency:xt+s integration form}
    \\
    \label{eq:def of Y i N from solution X}
    &Y_t^{i,N}
    =X_{t}^{i,N}+v( Y_t^{i,N},\mu_t^{Y,N})h,
    \qquad
    \mu_t^{Y,N}(\dd x):= \frac1N \sum_{j=1}^N \delta_{Y_t^{j,N}}(\dd x),
    \\
    \nonumber
    &\Psi_i(X_t^{i,N},\mu^{X,N}_{t}, t, h)
    \\
    \nonumber
    &\quad =
    X_{t}^{i,N}+
    \int_t^{t+h} \Big( v(  Y_t^{i,N},\mu_t^{Y,N})
    +b(t, Y_t^{i,N},\mu_t^{Y,N}) \Big) \dd s
    +
    \int_t^{t+h} \hs(t, Y_t^{i,N},\mu_t^{Y,N}) \dd W_s^i,
\end{align}
where the last equation is the integration form for the one-step map of SSM. 
Therefore, the first term \eqref{eq:b-consistency:term 1} can be estimated by Jensen's inequality

\begin{align}
\label{eq:b-consistency:first block}
 \bE \Big[    
\big|\mathbb{E}\big[X_{t+h}^{i,N}
&
-\Psi_i(X_t^{i,N},\mu^{X,N}_{t}, t, h) \mid \mathcal{F}_{t}\big]\big|^2 \Big]
\\
\label{eq:b-consistency:first block:v-v term}
\leq& 2h
\int_t^{t+h} \bE \big[   |
    v( X_{s}^{i,N},\mu_s^{X,N})
    - v(  Y_t^{i,N},\mu_t^{Y,N}) |^2 \big]~ \dd s
\\ 
\nonumber
&\qquad \qquad + 2h 
\int_t^{t+h} \bE \big[    |
    b(s,X_{s}^{i,N},\mu_s^{X,N})
    -b(t, Y_t^{i,N},\mu_t^{Y,N}) |^2\big]~ \dd s.
\end{align}
For the second term \eqref{eq:b-consistency:term 2}, we get 
\begin{align}
\nonumber
    &\quad \bE \Big[   
    \left|\left(\mathrm{id}-\mathbb{E}\big[\cdot \mid \mathcal{F}_{t}\big]\right)
    (X_{t+h}^{i,N}-\Psi_i(X_t^{i,N},\mu^{X,N}_{t}, t, h))\right|^2 \Big] 
    \\ 
    \label{eq:b-consistency:second block}
    & \qquad \quad 
    \le C \int_t^{t+h} \bE \big[    |
    \hs(s,X_{s}^{i,N},\mu_s^{X,N})
    -\hs(t, Y_t^{i,N},\mu_t^{Y,N}) |^2 \big]~ \dd s   .
\end{align}
By Young's inequality and Jensen's inequality, Assumption \ref{Ass:Monotone Assumption} and Proposition \ref{prop:yi-yj leq xi-xj},  for $s\in[t,t+h]$, we have 
\begin{align*}
     |&X_{s}^{i,N}-Y_t^{i,N} |^2 
     \le
     2  |X_{s}^{i,N}-X_{t}^{i,N} |^2+
     2 |X_{t}^{i,N}-Y_t^{i,N} |^2,
     \\
     |&X_{t}^{i,N}-Y_t^{i,N} |^2
     =|v(  Y_t^{i,N},\mu_t^{Y,N}) h |^2
     \le
     \frac{2h^2}{N} \sum_{j=1}^N |f(  Y_t^{i,N}-Y_t^{j,N})|^2
     + 2h^2| u(  Y_t^{i,N},\mu_t^{Y,N})  |^2
     \\
     &\quad \le 
     \frac{C h^2   }{N} \sum_{j=1}^N 
     \Big( 1+  |Y_t^{i,N}-Y_t^{j,N}|^{2q+2}\Big)
     +Ch^2\Big(1+|Y_t^{i,N}|^{2q+2}+ \frac{1}{N} \sum_{j=1}^N |Y_t^{j,N}|^2\Big)
    \\
      &\quad \le 
     \frac{C h^2  }{N} \sum_{j=1}^N 
     \Big( 1+  |X_t^{i,N}-X_t^{j,N}|^{2q+2}\Big) +Ch^2\Big(1+|Y_t^{i,N}|^{2q+2}+ \frac{1}{N} \sum_{j=1}^N |Y_t^{j,N}|^2\Big).
\end{align*}
Similarly,  we have 
\begin{align*} 
    \nonumber
     |X_{s}^{i,N}&-Y_t^{i,N} |^4 
     \le
     16  |X_{s}^{i,N}-X_{t}^{i,N} |^4+
     16 |X_{t}^{i,N}-Y_t^{i,N} |^4,
     \\
     \nonumber
     |X_{t}^{i,N}&-Y_t^{i,N} |^4
     \\
     &\le
      C h^4 \Big( 1+  |Y_t^{i,N}|^{4q+4}  + \frac{1 }{N} \sum_{j=1}^N |Y_t^{j,N}|^{4}\Big) 
      +
     \frac{C h^4  }{N} \sum_{j=1}^N 
     \Big( 1+  |X_t^{i,N}-X_t^{j,N}|^{4q+4}\Big).
\end{align*}
Using the moment stability of $X^{i,N}$ (note $m > 4q+4 >\max\{2(q+1),4\}$) and Jensen's inequality, we get 
\begin{align*}
    \frac{C h^2  }{N} \sum_{j=1}^N \bE \Big[ 
     \Big( 1+  |X_t^{i,N}-X_t^{j,N}|^{2q+2}\Big)    \Big]
     &\leq
     Ch^2, 
     \\
    \frac{C h^4  }{N} \sum_{j=1}^N \bE \Big[   \Big| 
     \Big( 1+  |X_t^{i,N}-X_t^{j,N}|^{2q+2}  \Big) \Big|^2   \Big]
     &\leq
     Ch^4.
\end{align*}
By \eqref{eq:b-consistency:xt+s integration form} and another application of Jensen's inequality
\begin{align*}
   \bE \big[     |X_{s}^{i,N}-X_{t}^{i,N} |^2 \big]
     \le&
    Ch \int_t^{s} \bE \big[    |v( X_{u}^{i,N},\mu_u^{X,N})
    +b(u,X_{u}^{i,N},\mu_u^{X,N})|^2 \big] \dd u
    \\
    & + C \int_t^{s}\bE \big[     |\hs(u,X_{u}^{i,N},\mu_u^{X,N})|^2 \big]\dd u ~\le C h.
\end{align*}
Similarly, we have  
\begin{align*}
    \bE \big[    |X_{s}^{i,N}-X_t^{i,N} |^4     \big]
     & 
    \le C h^2.
\end{align*}
Using the above results and we have sufficient moment bounds for $ Y_t^{i,N}$ from Proposition \ref{prop:  y  leq   x }, we conclude that
\begin{align*}
     \bE \big[    |X_{s}^{i,N}-Y_t^{i,N} |^2     \big] 
      &\le Ch, \quad
     \bE \big[    |X_{s}^{i,N}-Y_t^{i,N} |^4     \big] 
     \le Ch^2,
     \\
      \bE \big[    | W^{(2)}(\mu_s^{X,N},\mu_t^{Y,N} )  |^2 \big]
      &\le
      \frac{1}{N} \sum_{j=1}^N   \bE \big[    
      | X_{s}^{j,N}-Y_t^{j,N}  |^2 \big] 
      \le
      Ch.
\end{align*}

Thus, for the term \eqref{eq:b-consistency:first block:v-v term}, taking  Assumption \ref{Ass:Monotone Assumption} into account, following the arguments in \cite[Section 4.2]{chen2022SuperMeasure}, Jensen's inequality, Cauchy-Schwarz inequality and Young's inequality yield   
\begin{align*}
    \nonumber
     \bE \big[ &  | 
    v( X_{s}^{i,N},\mu_s^{X,N})
    - v(  Y_t^{i,N},\mu_t^{Y,N}) |^2  \big] 
      \\
       &\le   C\sqrt{
       \bE \big[    1+|X_{s}^{i,N}|^{4q}    +|Y_{t}^{i,N}|^{4q}         \big]
       \bE \big[  |X_{s}^{i,N}-Y_{t}^{i,N}|^{4}     \big]
      } 
      +C \bE \big[  |X_{s}^{i,N}-Y_{t}^{i,N}|^2      \big] 
      \le Ch.
\end{align*}
Also, from Assumption \ref{Ass:Monotone Assumption}, we have 
\begin{align*}
    \bE \big[   |&
    b(s,X_{s}^{i,N},\mu_s^{X,N})
    -b(t, Y_t^{i,N},\mu_t^{Y,N}) |^2 \big]
    \\
    &\le C\big(       h+   \bE \big[   |X_{s}^{i,N}-Y_t^{i,N} |^2 \big]
    + \bE \big[    
     | W^{(2)}(\mu_s^{X,N},\mu_t^{Y,N} )  |^2 \big] \big)
     \le C h,
\end{align*}
and similarly, from Jensen's inequality and Assumption \ref{Ass:Monotone Assumption}, we have  
\color{black}  	 
\begin{align*}
    \bE \big[    |&
    \hs(s,X_{s}^{i,N},\mu_s^{X,N})
    -\hs(t, Y_t^{i,N},\mu_t^{Y,N}) |^2 \big]
     \\
     &\leq C \bE \Big[  h+ \big(1+|X_{s}^{i,N}|^{2q}+|Y_t^{i,N}|^{2q}      \big) |X_{s}^{i,N}-Y_t^{i,N} |^2  + 
    \frac{1}{N} \sum_{j=1}^N |X_{s}^{j,N}-Y_t^{j,N} |^{2q+2}  \Big]
    \le C h.
\end{align*}
\color{black}
Substituting the results above back to  \eqref{eq:b-consistency:first block} and \eqref{eq:b-consistency:second block}, we have  
\begin{align*}
    \bE \Big[     \left|\mathbb{E}\big[X_{t+h}^{i,N}-\Psi_i(X_t^{i,N},\mu^{X,N}_{t}, t, h) \mid \mathcal{F}_{t}\big]\right|^2 \Big]
    &\leq C h
    \int_t^{t+h}
         h  \dd s \le Ch^{3},
    \\
    \quad\bE \Big[    
    \left|\left(\mathrm{id}-\mathbb{E}\left[\cdot \mid \mathcal{F}_{t}\right]\right)(X_{t+h}^{i,N}-\Psi_i(X_t^{i,N},\mu^{X,N}_{t}, t, h))\right|^2 \Big] 
    &  \le C  \int_t^{t+h} 
    h  \dd s    
    \le C h^2.
\end{align*}
\end{proof}

\subsubsection{Proof of convergence for the SSM scheme} 
\begin{proof}[Proof of statement 3 in Theorem \ref{theorem:SSM: convergence all}]
At last, we will prove the third statement in Theorem \ref{theorem:SSM: convergence all}. By combining the first two statements and Theorem \ref{theorem:bc: convergence rate }, we first have
    \begin{align}
    \label{eq: conv result, discrete}
     \sup_{n\in \llbracket 0,M \rrbracket}\sup_{i\in \llbracket 1,N \rrbracket} 
  \bE\big[\,  |X_{n}^{i,N}-\hx_{n}^{i,N} |^2 \big]    &   \le Ch.
    \end{align}
 Now, we extend the strong convergence rate to the continuous time version of the SSM, which has not been discussed in \cite{2015ssmBandC}. In order to extend the result above to the continuous extension of the SSM, we consider, for all $ n\in \llbracket 0,M-1 \rrbracket$,  $i\in \llbracket 1,N \rrbracket$, $r\in[0,h]$, 
\begin{align}
    |X_{t_{n}+r}^{i,N}-\hx_{t_{n}+r}^{i,N}|^2 
    = &
    \Big| 
    X_{t_{n}}^{i,N}-\hx_{n}^{i,N} 
    +
    \int_{t_n}^{t_{n}+r}  \big( v(X_{s}^{i,N},\mu_{s}^{X,N})-v(Y_n^{i,N},\mu^{Y,N}_{n})   
    \big) \dd s  
    \label{eq: continuous conv t1}
    \\ 
\nonumber     
    &+ 
    \int_{t_n}^{t_{n}+r}  \big( b(s,X_{s}^{i,N},\mu_{s}^{X,N})-b(t_n,Y_n^{i,N},\mu^{Y,N}_{n})   
    \big) \dd s
    \\
    &
    \label{eq: continuous conv t5}
    + 
    \int_{t_n}^{t_{n}+r}  \big( \hs(s,X_{s}^{i,N},\mu_{s}^{X,N})
    -\hs(t_n,Y_n^{i,N},\mu^{Y,N}_{n})   
    \big) \dd W^i_s
    \\ \nonumber
    &+ 
    \int_{t_n}^{t_{n}+r}  \big( v(Y_n^{i,N},\mu^{Y,N}_{n}) -v (Y_{n}^{i,\star,N},\hm^{Y,N}_n )
    \big) \dd s 
    \\
    \nonumber
        &+ \int_{t_n}^{t_{n}+r}  \big(  b(t_n,Y_n^{i,N},\mu^{Y,N}_{n})   
        - b(t_n,Y_{n}^{i,\star,N},\hm^{Y,N}_n)
    \big) \dd s
    \\
    \nonumber
        &
        + \int_{t_n}^{t_{n}+r}  \big(  \hs(t_n,Y_n^{i,N},\mu^{Y,N}_{n})   
        - \hs(t_n,Y_{n}^{i,\star,N},\hm^{Y,N}_n)
    \big)  \dd W^i_s \Big|^2 , 
\end{align}
where $Y_n^{i,N}=Y_{t_n}^{i,N}$, $\mu^{Y,N}_{n}=\mu^{Y,N}_{t_n}$ are defined in \eqref{eq:def of Y i N from solution X}. Taking expectation on both sides and using Jensen's inequality, we derive 
\begin{align*}
    \bE \big[& |X_{t_{n}+r}^{i,N}-\hx_{t_{n}+r}^{i,N}|^2 \big]
    \\
    &\leq 
    C 
    \bE \Big[   \Big| \big( X_{t_{n}}^{i,N}+  v(Y_n^{i,N},\mu^{Y,N}_{n})r +
    b(t_n,Y_n^{i,N},\mu^{Y,N}_{n}) r
    + \hs (t_n,Y_n^{i,N},\mu^{Y,N}_{n}) \Delta W_{n,r}^i\big)
    \\
    & - \big(\hx_{n}^{i,N}+v (Y_{n}^{i,\star,N},\hm^{Y,N}_n )r 
    +b(t_n,Y_{n}^{i,\star,N},\hm^{Y,N}_n) r
    + \hs (t_n,Y_{n}^{i,\star,N},\hm^{Y,N}_n) \Delta W_{n,r}^i
    \big) \Big|^2
    \Big]  +Ch, 
\end{align*}%
where $\Delta W_{n,r}^i=W_{t_n+r}^i-W_{t_n}^i$ and we remark that the integral terms in \eqref{eq: continuous conv t1}-\eqref{eq: continuous conv t5} can be analysed using the results in Section \ref{section: proof of B-consistent}. We now consider the following differences: From \eqref{eq:SSTM:scheme 1} and following similar calculations to \cite[Section 4.2]{chen2022SuperMeasure}, we have
\begin{align*}
    \bE \big[   \big| \big( &X_{t_{n}}^{i,N}+  v(Y_n^{i,N},\mu^{Y,N}_{n})r 
    \big)
    - 
    \big(\hx_{n}^{i,N}+v (Y_{n}^{i,\star,N},\hm^{Y,N}_n )r 
    \big) \big|^2
    \big]  
    \\
    &=  
     \bE \big[   \big\langle \big( X_{t_{n}}^{i,N}
     -\hx_{n}^{i,N} 
    \big)
    + r
    \Delta V_n^{Y} 
    ,
    \big( Y_n^{i,N}-
     Y_n^{i,\star,N} 
    \big)
    -(h-r)
    \Delta V_n^{Y} 
    \big\rangle
    \big]  
    \\
    &
    \leq
    \bE \big[ 
     | X_{t_{n}}^{i,N}-
     \hx_{n}^{i,N}|^2
   \big] \tfrac{2h-r}{2h}
    +
    \bE \big[ 
     | 
   Y_n^{i,N}-
     Y_n^{i,\star,N}
    |^2\big]\tfrac{r}{2h}
     +
    \bE \big[ 
     \big\langle  Y_n^{i,N}-
     Y_n^{i,\star,N},
     \Delta V_n^{Y}
    \big\rangle\big]r,
\end{align*}
where $\Delta V_n^{Y}=v(Y_n^{i,N},\mu^{Y,N}_{n})  -  v (Y_{n}^{i,\star,N},\hm^{Y,N}_n )$. 
By Jensen's inequality and the results in Section \ref{subsubsection: SSM is of C-stable }, we conclude that for all $ n\in \llbracket 0, M-1 \rrbracket$,  $i\in \llbracket 1,N \rrbracket$, $r\in[0,h]$, we have  
\begin{align*}
    \bE \big[& |X_{t_{n}+r}^{i,N}-\hx_{t_{n}+r}^{i,N}|^2 \big]
    \leq 
    Ch+ 
    C\bE \big[ 
     | X_{t_{n}}^{i,N}-
     \hx_{n}^{i,N}|^2
   \big] %
    \\
    & + 2r\bE \Big[ 
     \big\langle  Y_n^{i,N}-
     Y_n^{i,\star,N},
     u(Y_n^{i,N},\mu^{Y,N}_{n})  -  u (Y_{n}^{i,\star,N},\hm^{Y,N}_n )
    \big\rangle  \Big]
    \\ &
     +2r\bE \Big[ 
    \big|\sigma (t_n,Y_n^{i,N},\mu^{Y,N}_{n})
    -\sigma (t_n,Y_{n}^{i,\star,N},\hm^{Y,N}_n)  
      \big|^2
    \Big]
    \\&+
     \frac{2r}{N}\sum_{j=1}^N\bE \Big[
     \big\langle
    Y_n^{i,N}-
     Y_n^{i,\star,N},
     f( Y_n^{i,N} - Y_n^{j,N})-f( Y_n^{i,\star,N}-Y_n^{j,\star,N})
     \big\rangle \Big]
     \\ &
     +\frac{2r}{N}\sum_{j=1}^N\bE \Big[
     |  \fs( Y_n^{i,N} - Y_n^{j,N})-\fs( Y_n^{i,\star,N}-Y_n^{j,\star,N})|^2
     \Big]
    \\
    &
    \leq Ch 
    + 
    C\bE \big[ 
     | X_{t_{n}}^{i,N}-
     \hx_{n}^{i,N}|^2
   \big]
   +
   C\bE \big[ 
     | Y_n^{i,N}-
     Y_n^{i,\star,N}|^2
   \big]
   \leq Ch
   ,
\end{align*}
where we used \eqref{eq: conv result, discrete} and $\bE \big[      | Y_n^{i,N}-   Y_n^{i,\star,N}|^2   \big]
   \leq (1+Ch)\bE \big[ 
     | X_{t_{n}}^{i,N}-
     \hx_{n}^{i,N}|^2
   \big]$.
\color{black}
    
\end{proof}

\subsection{Proof of Theorem  \ref{theo:SSTM:stabilty}: Mean-square contractivity for the SSM} 

\begin{proof}[Proof of Theorem  \ref{theo:SSTM:stabilty}]
Using the notations of Theorem \ref{theo:SSTM:stabilty} and Section \ref{subsubsection: SSM is of C-stable }, and recalling the results in \eqref{eq:c-stable:y-y: u} and \eqref{eq: c-stable y-y f-f},  for all $i\in \llbracket 1,N \rrbracket$, $n \in \llbracket 0,M-1 \rrbracket$, we have 
\color{black}
\begin{align*} 
    \bE\big[\, &  |
Y_{n}^{i,X,N}-Y_{n}^{i,Z,N}|^2  \big] 
\\
&
\le \frac{2h}{N} \sum_{j=1}^N 
 \bE\big[\,   \langle   Y_{n}^{i,X,N}-Y_{n}^{i,Z,N} ,
    f( Y_{n}^{i,X,N}- Y_{n}^{j,X,N})- f( Y_{n}^{i,Z,N}- Y_{n}^{j,Z,N})  
  \rangle\big]
\\
& +
\bE\big[|\hx_{n}^{i,N}-\hz_{n}^{i,N}|^2\big]
+
2h\bE\big[\langle 
Y_{n}^{i,X,N}-Y_{n}^{i,Z,N}, 
u(Y_{n}^{i,X,N},\hm^{Y,X,N}_n)  -u(Y_{n}^{i,Z,N},\hm^{Y,Z,N}_n ) 
\rangle \big] 
\\
&\le 
|\hx_{n}^{i,N}-\hz_{n}^{i,N}|^2 + 
h (4L^{(1),+}_{(f)}+2L^{(1)}_{(u\sigma)} +2L^{(2)}_{(u\sigma)}  ) \bE\big[\,    |Y_{n}^{i,X,N}-Y_{n}^{i,Z,N}|^2 \big],
\end{align*}
and therefore
\begin{align}
\bE\big[\, &  | Y_{n}^{i,X,N}-Y_{n}^{i,Z,N}|^2\big]
\le
\bE\big[\,   |\hx_{n}^{i,N}-\hz_{n}^{i,N}|^2  \big]
\frac{ 1}{ 1-h( 4L^{(1),+}_{(f)}+2L^{(1)}_{(u\sigma)} +2L^{(2)}_{(u\sigma)}  )   }.
\label{eq: ergodicity proof eq3}
\end{align}
 \color{black}	
Next, we consider 
\begin{align*}
     \bE\big[\,  |&\hx_{n+1}^{i,N}-\hz_{n+1}^{i,N} |^2 \big]
     = 
     \bE\Big[\, \big|Y_{n}^{i,X,N}
     + b(t_n,Y_{n}^{i,X,N},\hm_n^{Y,X,N})h
     + \hs(t_n,Y_{n}^{i,X,N},\hm_n^{Y,X,N}) \Delta W_n^i
     \\
     &\qquad \quad\quad\qquad\qquad ~
     -Y_{n}^{i,Z,N} 
     - b(t_n,Y_{n}^{i,Z,N},\hm_n^{Y,Z,N})h
     - \hs(t_n,Y_{n}^{i,Z,N},\hm_n^{Y,Z,N}) \Delta W_n^i
     \big|^2 
     \Big]
     \\
     &=\bE\big[\,  |Y_{n}^{i,X,N}
                  -Y_{n}^{i,Z,N} \big|^2  
     + \big|\hs(t_n,Y_{n}^{i,X,N},\hm_n^{Y,X,N})-  \hs(t_n,Y_{n}^{i,Z,N},\hm_n^{Y,Z,N}) \big|^2 h    
     \big]
     \\
     &\qquad +h^2
     \bE\big[\, |b(t_n,Y_{n}^{i,X,N},\hm_n^{Y,X,N})-  b(t_n,Y_{n}^{i,Z,N},\hm_n^{Y,Z,N})  |^2 
     \big]
     \\
     &\qquad +2 h
     \bE\big[\, \langle  Y_{n}^{i,X,N} -Y_{n}^{i,Z,N}, b(t_n,Y_{n}^{i,X,N},\hm_n^{Y,X,N})-  b(t_n,Y_{n}^{i,Z,N},\hm_n^{Y,Z,N})\rangle    
     \big]
     \\
     &\leq 
     \bE\big[\,|\hx_{n}^{i,N}-\hz_{n}^{i,N}|^2
     \big]
     \\
     &~+
      \bE\big[\,| Y_{n}^{i,X,N} -Y_{n}^{i,Z,N}|^2
     \big] \Big(   h \big(4L^{(1),+}_{(f)}+2L^{(1)}_{(u\sigma)} +2L^{(2)}_{(u\sigma)}+2L_{(b)}^{(2)}+2L_{(b)}^{(3)} \big)+2L_{(b)}^{(1)}h^2      \Big),
\end{align*}
where in the last inequality we used the results above, \eqref{eq: c-stable y-y f-f} and Cauchy–Schwarz inequality. Substituting \eqref{eq: ergodicity proof eq3} into the last inequality  yields the result.
\end{proof}

\appendix

\section{Properties of the convolved drift term after integration}
\begin{lemma}
\label{AppendixLemma-aux1}
Let $(\mathbf{A}^f,\mathbf{A}^{f_{\sigma}})$ in Assumption \ref{Ass:Monotone Assumption} hold. Then it holds for any $\mu\in \cP_2(\bR^d)$ and $m >2$
\begin{align*}
\int_{\bR^d}
\big(&
\langle x, (f\ast \mu)(x) \rangle
+(m-1) |(\fs \ast \mu)(x)|^2
\big)
\mu(\dd x) 
\\
&=
\int_{\bR^d} \int_{\bR^d}
\big(
\langle  x , f(x-y)   \rangle 
+ (m-1)| \fs(x-y)  |^2
\big)
\mu(\dd x)\mu(\dd y)
\\
&
\le L^{(1)}_{(f)}  \big( 
\mu(|\cdot|^2)-| \mu( \textrm{id} )|^2 \big) = L^{(1)}_{(f)}\textrm{Var}_\mu ,
\end{align*}
where $\mu(|\cdot|^2):=\int_{\bR^d} |x|^2 \mu(\dd x)$, $\mu( \text{id} ) :=\int_{\bR^d} x \mu(\dd x)$ and $\textrm{Var}_\mu= \mu(|\cdot|^2)-| \mu( \textrm{id} )|^2$.
\end{lemma}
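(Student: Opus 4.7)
The strategy is to first replace the convolved expressions by double integrals via Fubini (with Jensen's inequality applied to the $\fs$-term, which is where the equality in the statement must really be read as an inequality), then symmetrize using the oddness of $f$ to recover the monotonicity condition $(\mathbf{A}^f,\mathbf{A}^{\fs})$, and finally recognize the remaining quadratic as twice the variance of $\mu$.

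First I would rewrite by Fubini
\begin{align*}
\int_{\bR^d}\langle x,(f\ast\mu)(x)\rangle \mu(\dd x)
&=\int_{\bR^d}\!\!\int_{\bR^d}\langle x,f(x-y)\rangle \mu(\dd x)\mu(\dd y),
\end{align*}
and by Jensen's inequality applied to the inner integral defining $(\fs\ast\mu)(x)$,
\begin{align*}
\int_{\bR^d}|(\fs\ast\mu)(x)|^2\mu(\dd x)
\leq \int_{\bR^d}\!\!\int_{\bR^d}|\fs(x-y)|^2\mu(\dd x)\mu(\dd y).
\end{align*}

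Next I would symmetrize the $f$-integral using the oddness property $f(-z)=-f(z)$ in $(\mathbf{A}^f)$. Swapping $x\leftrightarrow y$ in the double integral and applying $f(y-x)=-f(x-y)$ gives
\begin{align*}
\int_{\bR^d}\!\!\int_{\bR^d}\langle x,f(x-y)\rangle\mu(\dd x)\mu(\dd y)
=-\int_{\bR^d}\!\!\int_{\bR^d}\langle y,f(x-y)\rangle\mu(\dd x)\mu(\dd y),
\end{align*}
and averaging the two expressions yields
\begin{align*}
\int_{\bR^d}\!\!\int_{\bR^d}\langle x,f(x-y)\rangle\mu(\dd x)\mu(\dd y)
=\tfrac12\int_{\bR^d}\!\!\int_{\bR^d}\langle x-y,f(x-y)\rangle\mu(\dd x)\mu(\dd y).
\end{align*}

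The third step is to invoke the monotonicity condition in $(\mathbf{A}^f,\mathbf{A}^{\fs})$ with the second argument set to zero; since $f(0)=\fs(0)=0$ by the normalization, we obtain, for every $z\in\bR^d$,
\begin{align*}
\langle z,f(z)\rangle + 2(m-1)|\fs(z)|^2 \leq L^{(1)}_{(f)}|z|^2.
\end{align*}
Setting $z=x-y$, integrating against $\mu\otimes\mu$, and combining with the two preceding displays collects exactly the left-hand side of the claim, bounded by $\tfrac{L^{(1)}_{(f)}}{2}\int\int|x-y|^2\mu(\dd x)\mu(\dd y)$.

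Finally I would expand the variance term:
\begin{align*}
\int_{\bR^d}\!\!\int_{\bR^d}|x-y|^2\mu(\dd x)\mu(\dd y)
= 2\mu(|\cdot|^2)-2|\mu(\mathrm{id})|^2
= 2\,\mathrm{Var}_\mu,
\end{align*}
which supplies the factor $2$ cancelling the $\tfrac12$ and yields the constant $L^{(1)}_{(f)}\mathrm{Var}_\mu$. The only mild subtlety is that the displayed equality in the statement should be understood as an inequality due to the Jensen step on $\fs$; apart from this, the proof is a direct symmetrization argument and no substantive obstacle is expected.
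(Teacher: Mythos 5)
Your proof matches the paper's own: symmetrize the $f$ double integral via the oddness of $f$, invoke the one-sided Lipschitz condition $(\mathbf{A}^f,\mathbf{A}^{\fs})$ with $x'=0$ (using $f(0)=\fs(0)=0$), and expand $\int\!\int|x-y|^2\mu(\dd x)\mu(\dd y)=2\,\mathrm{Var}_\mu$. You are also right that the displayed ``$=$'' in the lemma statement is really a ``$\le$'': Jensen gives $|(\fs\ast\mu)(x)|^2\le\int|\fs(x-y)|^2\mu(\dd y)$ and equality generally fails; the paper's proof sidesteps this by starting directly from the double-integral form, and elsewhere the lemma is applied only through that middle expression (after bounding $|\hs|^2$ via Jensen as in Remark \ref{remark:ImpliedProperties}), so no downstream result is affected — but the equality sign as written is imprecise and your observation is a fair catch.
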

\begin{proof} 
Using $f(0)=\fs(0)=0$ and that $f$ is an odd function we have \color{black}  	
\begin{align*}
  \int_{\bR^d} & \int_{\bR^d} \big(
    \langle  x , f(x-y)   \rangle 
    + (m-1)| \fs(x-y)  |^2
    \big)  \mu(\dd x)\mu(\dd y) 
  \\
  &
  =
    \int_{\bR^d} \int_{\bR^d} \frac12\big(
    \langle  x-y , f(x-y)   \rangle 
    + 2(m-1)| \fs(x-y)  |^2
    \big) \mu(\dd x)\mu(\dd y) 
  \\
  &
  \leq 
\frac12  L^{(1)}_{(f)} \int_{\bR^d} \int_{\bR^d} |x-y|^2  \mu(\dd x)\mu(\dd y)  
  =
  \frac12  L^{(1)}_{(f)}
  \int_{\bR^d} \int_{\bR^d} \big(|x|^2-2 \left \langle x,y \right \rangle +|y|^2\big)  \mu(\dd x)\mu(\dd y)
 \\ 
 &= \frac12  L^{(1)}_{(f)} \Big(
  2\mu(|\cdot|^2)
  -2 \int_{\bR^d} x \mu(\dd x) \int_{\bR^d} y \mu(\dd y)
  \Big) 
  \\
  &=
  L^{(1)}_{(f)} \big( \mu(|\cdot|^2) - \big|\int_{\bR^d} x \mu(\dd x)\big|^2 \big) 
  = L^{(1)}_{(f)} \textrm{Var}_\mu,
\end{align*}
\color{black}
where for the inequality we used the monotonicity condition on the convolution kernels and the symmetry of the double integration in $\mu$.   
\end{proof}

\begin{lemma}
\label{AppendixLemma-aux2}
Let $f$ and $f_{\sigma}$ satisfy conditions $(\mathbf{A}^f,~\mathbf{A}^{\fs})$ of Assumption \ref{Ass:Monotone Assumption}. Set $ L^{(1),+}_{(f)}=\max\{0,L^{(1)}_{(f)}\}$. Then, for any $\mu,\nu\in \cP_{2q+2}(\bR^d)$ with $q$ defined in Assumption \ref{Ass:Monotone Assumption}, we have
\begin{align*}
    &\int_{\bR^d} \int_{\bR^d}  \Big( \langle  x-y ,  (f\ast \mu )(x) - (f\ast \nu\ ) (y)  \rangle  
    +(m-1) \big| (\fs\ast \mu )(x)- (\fs\ast \nu ) (y)\big|^2 
    \Big)
    \mu(\dd x)\nu(\dd y) 
    \\
    & 
    \leq 
    2 L^{(1),+}_{(f)} 
 \int_{\bR^d} \int_{\bR^d}  |x-y|^2  \mu(\dd x)\nu(\dd y).
\end{align*}
\end{lemma}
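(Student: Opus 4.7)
The plan is to expand both convolutions as four-fold integrals and apply the pointwise monotonicity condition $(\mathbf{A}^f,\mathbf{A}^{\fs})$ in a symmetrised form. Writing $(f\ast\mu)(x)-(f\ast\nu)(y)=\int\int[f(x-u)-f(y-v)]\mu(\dd u)\nu(\dd v)$ (and similarly for $\fs$), Fubini converts the linear inner-product term exactly, while Jensen's inequality applied to the convex map $|\cdot|^2$ yields
\begin{align*}
|(\fs\ast\mu)(x)-(\fs\ast\nu)(y)|^2
\leq \int\int |\fs(x-u)-\fs(y-v)|^2\mu(\dd u)\nu(\dd v),
\end{align*}
since $\mu(\dd u)\nu(\dd v)$ is a probability measure.

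For the inner-product term, I will exploit the odd-function hypothesis $f(-z)=-f(z)$ in $(\mathbf{A}^f)$ together with the joint relabelling $(x,y)\leftrightarrow(u,v)$ under the symmetric product measure $\mu(\dd x)\nu(\dd y)\mu(\dd u)\nu(\dd v)$. Because $f(u-x)=-f(x-u)$ and $f(v-y)=-f(y-v)$, averaging the original integral with its swapped version produces
\begin{align*}
\int\!\!\int\!\!\int\!\!\int \langle x-y, f(x-u)-f(y-v)\rangle\, \mu\nu\mu\nu
= \frac{1}{2}\int\!\!\int\!\!\int\!\!\int \langle (x-u)-(y-v), f(x-u)-f(y-v)\rangle\, \mu\nu\mu\nu .
\end{align*}

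The coefficients now align with the monotonicity bound applied pointwise at $a=x-u$, $b=y-v$:
\begin{align*}
\tfrac{1}{2}\langle a-b, f(a)-f(b)\rangle + (m-1)|\fs(a)-\fs(b)|^2 \leq \tfrac{L^{(1)}_{(f)}}{2}|a-b|^2.
\end{align*}
Summing, dropping the Jensen-side loss, and expanding $|(x-u)-(y-v)|^2=|x-y|^2-2\langle x-y, u-v\rangle+|u-v|^2$, the cross term integrates to $\langle \mu(\mathrm{id})-\nu(\mathrm{id}), \mu(\mathrm{id})-\nu(\mathrm{id})\rangle = |\mu(\mathrm{id})-\nu(\mathrm{id})|^2\geq 0$ which may be discarded (since we need an upper bound and $L^{(1),+}_{(f)}\geq 0$), while the diagonal terms each equal $\int\!\!\int|x-y|^2\mu(\dd x)\nu(\dd y)$ by symmetry of the product measure. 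This delivers the stated estimate (in fact with the sharper constant $L^{(1),+}_{(f)}$; the factor $2$ in the statement is a safe relaxation).

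The main delicate point is the pairing of factors: the symmetrisation produces exactly $\tfrac{1}{2}$ in front of $\langle\cdot,f\rangle$, which is the precise weight needed so that the $(m-1)|\fs|^2$ contribution from Jensen matches the $2(m-1)$ coefficient in the monotonicity hypothesis. Without $f$ being odd, this balance breaks and only a strictly weaker bound (depending on a Lipschitz-growth constant for $f$) could be recovered.
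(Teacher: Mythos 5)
Your argument is correct and follows essentially the same route as the paper's proof: expand both convolutions as four-fold integrals, use the swap $(x,y)\leftrightarrow(u,v)$ and the oddness of $f$ to replace $\langle x-y, f(x-u)-f(y-v)\rangle$ by $\tfrac12\langle (x-u)-(y-v), f(x-u)-f(y-v)\rangle$, apply Jensen to the $\fs$ term (the paper's proof implicitly does this too, writing an equality where it should be $\leq$), invoke the pointwise monotonicity bound at $a=x-u$, $b=y-v$, and finally integrate $|(x-u)-(y-v)|^2$. The one genuine difference is in the final step: the paper applies the crude inequality $|(x-u)-(y-v)|^2\leq 2|x-y|^2+2|u-v|^2$, which forfeits a factor of two and produces the constant $2L^{(1),+}_{(f)}$ in the statement; you instead keep the exact identity $\int\!\!\int\!\!\int\!\!\int |(x-u)-(y-v)|^2 = 2\int\!\!\int|x-y|^2 - 2|\mu(\mathrm{id})-\nu(\mathrm{id})|^2$ and drop the nonpositive cross contribution, arriving at the sharper constant $L^{(1),+}_{(f)}$. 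Both constants suffice for every place the lemma is invoked in the paper, so the result is the same in substance. One small phrasing slip: what "may be discarded" is the \emph{full} cross contribution $-2|\mu(\mathrm{id})-\nu(\mathrm{id})|^2\leq 0$ (the integrand's cross term carries a $-2$ coefficient); it is discardable precisely because it is nonpositive, not because $|\mu(\mathrm{id})-\nu(\mathrm{id})|^2\geq 0$ per se — your parenthetical "since we need an upper bound and $L^{(1),+}_{(f)}\geq 0$" reads as if you were discarding a nonnegative quantity from an upper bound, which would go the wrong way. The computation you carry out is right; only the justification is worded backwards.
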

Although not explicitly mentioned, this result \textit{requires} the random variables $X\sim \mu$ and $Y\sim \nu$ to be independent -- see Remark \ref{remark:IndependenceIsNeeded}.

\begin{proof}
For any $\mu,\nu\in \cP_{2q+2}(\bR^d)$, we compute 
\allowdisplaybreaks
\begin{align*}
    &
    \int_{\bR^d} \int_{\bR^d}   \langle  x-y ,  (f\ast \mu )(x) - (f\ast \nu ) (y)  \rangle   \mu(\dd x)\nu(\dd y) 
    \\
    &
         =
         \int_{\bR^d} \int_{\bR^d}\int_{\bR^d} \int_{\bR^d} \langle  x-y , f(x-x') -f(y-y')   \rangle  \mu(\dd x')\nu(\dd y') \mu(\dd x)\nu(\dd y) 
         \\
         &
         = \frac{1}{2} \Bigg[  
         \int_{\bR^d} \int_{\bR^d}\int_{\bR^d} \int_{\bR^d} \langle  x-y , f(x-x') -f(y-y')   \rangle  \mu(\dd x')\nu(\dd y') \mu(\dd x)\nu(\dd y)
         \\
         &\quad\quad
         -
         \int_{\bR^d} \int_{\bR^d}\int_{\bR^d} \int_{\bR^d} \langle  x'-y' , f(x-x') -f(y-y')   \rangle  \mu(\dd x)\nu(\dd y)\mu(\dd x')\nu(\dd y') 
         \Bigg]
         \\
         &
         = \frac{1}{2}
         \int_{\bR^d} \int_{\bR^d}\int_{\bR^d} \int_{\bR^d} \langle  (x-x')-(y-y') , f(x-x') -f(y-y')   \rangle  \mu(\dd x)\nu(\dd y)\mu(\dd x')\nu(\dd y'),
    \end{align*} 
and thus,
\allowdisplaybreaks
    \begin{align*}
    &
    \int_{\bR^d} \int_{\bR^d}   \Big( \langle  x-y ,  (f\ast \mu )(x) - (f\ast \nu ) (y)  \rangle  
    + (m-1)
    \big| (\fs\ast \mu )(x)- (\fs\ast \nu ) (y)\big|^2 \Big)
    \mu(\dd x)\nu(\dd y) 
    \\
    &
         =\frac{1}{2}
         \int_{\bR^d} \int_{\bR^d}\int_{\bR^d} \int_{\bR^d}
         \Big( 
         \langle  (x-x')-(y-y') , f(x-x') -f(y-y')  \rangle
         \\
         & \qquad\qquad\qquad\qquad\qquad
         + 2(m-1) |\fs(x-x') -\fs(y-y')  |^2
         \Big)
         \mu(\dd x')\nu(\dd y') \mu(\dd x)\nu(\dd y) 
         \\
         &
         \le \frac12 L^{(1)}_{(f)}
           \int_{\bR^d} \int_{\bR^d}\int_{\bR^d} \int_{\bR^d}
           \big|  (x-x')-(y-y')   \big|^2
           \mu(\dd x')\nu(\dd y') \mu(\dd x)\nu(\dd y)
         \\
         &
         \le 2 L^{(1),+}_{(f)} 
         \int_{\bR^d} \int_{\bR^d}  |x-y|^2  \mu(\dd x)\nu(\dd y).
    \end{align*} 
\end{proof}

\color{black}
\begin{remark}[Independence is needed]
\label{remark:IndependenceIsNeeded}
In the context of Lemma \ref{AppendixLemma-aux2}, take  $X\sim \mu,~Y\sim \nu$ with $\mu,\nu\in \cP_{2q+2}(\bR^d)$. 
Let $\pi\in \cP_{2q+2}(\bR^d\times \bR^d)$ be the joint distribution of $X,Y$. As in the proof of Lemma \ref{AppendixLemma-aux2}, we compute
\begin{align*}
\bE[  &\langle  X -Y,  (f\ast \mu )(X) -  (f\ast \nu )(Y)  \rangle   ]
\\
    &=
    \int_{\bR^d} \int_{\bR^d}   \langle  x-y ,  (f\ast \mu )(x) - (f\ast \nu ) (y)  \rangle   \pi(\dd x,\dd y) 
    \\
    &
         =
         \int_{\bR^d} \int_{\bR^d}\int_{\bR^d} \int_{\bR^d} \langle  x-y , f(x-x') -f(y-y')   \rangle  \mu(\dd x')\nu(\dd y') \pi(\dd x,\dd y)  
    \\
    &= 
    \frac12 \int_{\bR^d} \int_{\bR^d}\int_{\bR^d} \int_{\bR^d} \langle  x-y , f(x-x') -f(y-y')   \rangle  \mu(\dd x')\nu(\dd y') \pi(\dd x,\dd y)
    \\
    &\quad -
    \frac12 \int_{\bR^d} \int_{\bR^d}\int_{\bR^d} \int_{\bR^d} \langle  x'-y' , f(x-x') -f(y-y')   \rangle  \mu(\dd x)\nu(\dd y) \pi(\dd x',\dd y').
    \end{align*}
It is now obvious that for general choices of $f$, we cannot rearrange the measure components to obtain the result of Lemma \ref{AppendixLemma-aux2} since $\pi(\dd x,\dd y) \neq \mu (\dd x) \nu(\dd y)$. 
\end{remark}
 \color{black}


\bibliographystyle{amsplain}

\begin{thebibliography}{10}

\bibitem{ableidinger2017stochastic}
Markus Ableidinger, Evelyn Buckwar, and Harald Hinterleitner, \emph{A
  stochastic version of the {J}ansen and {R}it neural mass model: {A}nalysis
  and numerics}, J. Math. Neurosci. \textbf{7} (2017), no.~1, 1--35.

\bibitem{Spigler2005KuramotoModel}
Juan~A. Acebr\'on, L.~L. Bonilla, Conrad~J. P\'erez~Vicente, F\'elix Ritort,
  and Renato Spigler, \emph{The {K}uramoto model: A simple paradigm for
  synchronization phenomena}, Rev. Mod. Phys. \textbf{77} (2005), 137--185.

\bibitem{adams2020large}
Daniel Adams, Gon{\c c}alo {dos Reis}, Romain Ravaille, William Salkeld, and
  Julian Tugaut, \emph{{L}arge {D}eviations and {E}xit-times for reflected
  {McK}ean–{V}lasov equations with self-stabilising terms and superlinear
  drifts}, Stochastic Processes and their Applications \textbf{146} (2022),
  264--310.

\bibitem{MR2761313}
Shin~Mi Ahn and Seung-Yeal Ha, \emph{Stochastic flocking dynamics of the
  {C}ucker-{S}male model with multiplicative white noises}, J. Math. Phys.
  \textbf{51} (2010), no.~10, 103301, 17. \MR{2761313}

\bibitem{bao2021first}
Jianhai Bao, Christoph Reisinger, Panpan Ren, and Wolfgang Stockinger,
  \emph{First-order convergence of {M}ilstein schemes for {M}c{K}ean-{V}lasov
  equations and interacting particle systems}, Proceedings of the Royal Society
  A \textbf{477} (2021), no.~2245, 20200258.

\bibitem{bao2020milstein}
\bysame, \emph{Milstein schemes and antithetic multilevel {M}onte {C}arlo
  sampling for delay {M}c{K}ean--{V}lasov equations and interacting particle
  systems}, IMA J. Numer. Anal. \textbf{44} (2024), no.~4, 2437--2479.
  \MR{4780841}

\bibitem{belomestny2021semiparametric}
Denis Belomestny, Vytaut{\.e} Pilipauskait{\.e}, and Mark Podolskij,
  \emph{Semiparametric estimation of {McK}ean--{V}lasov {SDE}s}, Annales de
  l'Institut Henri Poincare (B) Probabilites et statistiques \textbf{59}
  (2023), no.~1, 79--96.

\bibitem{belomestny2018projected}
Denis Belomestny and John Schoenmakers, \emph{Projected particle methods for
  solving {M}c{K}ean--{V}lasov stochastic differential equations}, SIAM Journal
  on Numerical Analysis \textbf{56} (2018), no.~6, 3169--3195.

\bibitem{FUNCTIONITER}
S.~Benachour, B.~Roynette, D.~Talay, and P.~Vallois, \emph{Nonlinear
  self-stabilizing processes. i. existence, invariant probability, propagation
  of chaos}, Stochastic Processes and their Applications \textbf{75(2)} (1998),
  173--201.

\bibitem{2015ssmBandC}
Wolf-Jürgen Beyn, Elena Isaak, and Raphael Kruse, \emph{{S}tochastic
  {C}-{S}tability and {B}-{C}onsistency of {E}xplicit and {I}mplicit
  {E}uler-{T}ype {S}chemes}, Journal of Scientific Computing \textbf{67}
  (2015), no.~3, 955–987.

\bibitem{biswas2020well}
Sani Biswas, Chaman Kumar, Gon{\c{c}}alo dos Reis, Christoph Reisinger, et~al.,
  \emph{Well-posedness and tamed {E}uler schemes for {M}c{K}ean-.{V}lasov
  equations driven by {L}{\'e}vy noise}, arXiv preprint arXiv:2010.08585, 2020.

\bibitem{biswasetal2022RandomisedMS}
Sani Biswas, Chaman Kumar, Neelima, Gon{\c c}alo dos Reis, and Christoph
  Reisinger, \emph{An explicit {M}ilstein-type scheme for interacting particle
  systems and {M}c{K}ean-{V}lasov {SDE}s with common noise and
  non-differentiable drift coefficients}, Ann. Appl. Probab. \textbf{34}
  (2024), no.~2, 2326--2363. \MR{4728171}

\bibitem{MR2860672}
Fran\c{c}ois Bolley, Jos\'{e}~A. Ca\~{n}izo, and Jos\'{e}~A. Carrillo,
  \emph{Stochastic mean-field limit: non-{L}ipschitz forces and swarming},
  Math. Models Methods Appl. Sci. \textbf{21} (2011), no.~11, 2179--2210.
  \MR{2860672}

\bibitem{borghi2023constrained}
Giacomo Borghi, Michael Herty, and Lorenzo Pareschi, \emph{Constrained
  consensus-based optimization}, SIAM Journal on Optimization \textbf{33}
  (2023), no.~1, 211--236.

\bibitem{bossytalay1997}
Mireille Bossy and Denis Talay, \emph{A stochastic particle method for the
  {M}c{K}ean-{V}lasov and the {B}urgers equation}, Math. Comp. \textbf{66}
  (1997), no.~217, 157--192. \MR{1370849}

\bibitem{buckwarbrehier2021FHNmodelandsplittingBSTT2020}
Evelyn Buckwar, Adeline Samson, Massimiliano Tamborrino, and Irene Tubikanec,
  \emph{A splitting method for {SDE}s with locally {L}ipschitz drift:
  Illustration on the {FitzH}ugh-{N}agumo model}, Applied Numerical Mathematics
  \textbf{179} (2022), 191--220.

\bibitem{buckwar2020spectral}
Evelyn Buckwar, Massimiliano Tamborrino, and Irene Tubikanec, \emph{Spectral
  density-based and measure-preserving {ABC} for partially observed diffusion
  processes. {A}n illustration on {H}amiltonian {SDE}s}, Stat. Comput.
  \textbf{30} (2020), no.~3, 627--648. \MR{4065223}

\bibitem{C1}
Ren\'e{} Carmona, Fran\c~cois Delarue, and Daniel Lacker, \emph{Mean field
  games with common noise}, Ann. Probab. \textbf{44} (2016), no.~6, 3740--3803.
  \MR{3572323}

\bibitem{CarmonaDelarue2017book1}
Rene Carmona and Francois Delarue, \emph{Probabilistic theory of mean field
  games with applications {I}}, 1st ed., Probability Theory and Stochastic
  Modelling, vol.~84, Springer International Publishing, 2017.

\bibitem{carrillo2018analytical}
Jos{\'e}~A Carrillo, Young-Pil Choi, Claudia Totzeck, and Oliver Tse, \emph{An
  analytical framework for consensus-based global optimization method},
  Mathematical Models and Methods in Applied Sciences \textbf{28} (2018),
  no.~06, 1037--1066.

\bibitem{carrillo2003kinetic}
Jos{\'e}~A Carrillo, Robert~J McCann, and C{\'e}dric Villani, \emph{Kinetic
  equilibration rates for granular media and related equations: entropy
  dissipation and mass transportation estimates}, Revista Matematica
  Iberoamericana \textbf{19} (2003), no.~3, 971--1018.

\bibitem{RAY2}
Paul-Eric {Chaudru de Raynal} and Noufel Frikha, \emph{Well-posedness for some
  non-linear {SDE}s and related {PDE} on the {W}asserstein space}, Journal de
  Mathématiques Pures et Appliquées \textbf{159} (2022), 1--167.

\bibitem{chen2022SuperMeasure}
Xingyuan Chen and Gon{\c c}alo dos Reis, \emph{Euler simulation of interacting
  particle systems and {M}c{K}ean-{V}lasov {SDE}s with fully super-linear
  growth drifts in space and interaction}, IMA J. Numer. Anal. \textbf{44}
  (2024), no.~2, 751--796. \MR{4727110}

\bibitem{chen2024improved}
Xingyuan Chen, Gon{\c c}alo dos Reis, Wolfgang Stockinger, and Zac Wilde,
  \emph{Improved weak convergence for the long time simulation of mean-field
  langevin equations}, arXiv preprint arXiv:2405.01346, 2024.

\bibitem{2021SSM}
Xingyuan Chen and Gon\c{c}alo dos Reis, \emph{A flexible split-step scheme for
  solving {M}c{K}ean--{V}lasov stochastic differential equations}, Appl. Math.
  Comput. \textbf{427} (2022), Paper No. 127180. \MR{4413221}

\bibitem{comte2022nonparametric}
Fabienne Comte and Valentine Genon-Catalot, \emph{Nonparametric adaptive
  estimation for interacting particle systems}, Scand. J. Stat. \textbf{50}
  (2023), no.~4, 1716--1755. \MR{4677374}

\bibitem{de2021reducing}
Paul-Eric~Chaudru de~Raynal, Manh~Hong Duong, Pierre Monmarch{\'e}, Milica
  Toma{\v{s}}evi{\'c}, and Julian Tugaut, \emph{Reducing exit-times of
  diffusions with repulsive interactions}, ESAIM: Probability and Statistics
  \textbf{27} (2023), 723--748.

\bibitem{delarue2018masterCLT}
Fran\c{c}ois Delarue, Daniel Lacker, and Kavita Ramanan, \emph{From the master
  equation to mean field game limit theory: a central limit theorem}, Electron.
  J. Probab. \textbf{24} (2019), Paper No. 51, 54. \MR{3954791}

\bibitem{delarue2021uniform}
Fran{\c{c}}ois Delarue and Alvin Tse, \emph{Uniform in time weak propagation of
  chaos on the torus}, arXiv preprint arXiv:2104.14973, 2021.

\bibitem{reis2018simulation}
Gon{\c c}alo dos Reis, Stefan Engelhardt, and Greig Smith, \emph{Simulation of
  {M}c{K}ean-{V}lasov {SDE}s with super-linear growth}, IMA J. Numer. Anal.
  \textbf{42} (2022), no.~1, 874--922. \MR{4367675}

\bibitem{dosReisSalkeldTugaut2017}
Gon\c{c}alo dos Reis, William Salkeld, and Julian Tugaut,
  \emph{Freidlin-{W}entzell {LDP} in path space for {M}c{K}ean--{V}lasov
  equations and the functional iterated logarithm law}, Ann. Appl. Probab.
  \textbf{29} (2019), no.~3, 1487--1540. \MR{3914550}

\bibitem{Duan2010kineticflockingwithdiffusion}
Renjun Duan, Massimo Fornasier, and Giuseppe Toscani, \emph{A kinetic flocking
  model with diffusion}, Comm. Math. Phys. \textbf{300} (2010), no.~1, 95--145.
  \MR{2725184}

\bibitem{erny2021strong}
Xavier Erny, Eva L\"ocherbach, and Dasha Loukianova, \emph{Strong error bounds
  for the convergence to its mean field limit for systems of interacting
  neurons in a diffusive scaling}, Ann. Appl. Probab. \textbf{33} (2023),
  no.~5, 3563--3586. \MR{4663491}

\bibitem{fournier2015rate}
Nicolas Fournier and Arnaud Guillin, \emph{On the rate of convergence in
  {W}asserstein distance of the empirical measure}, Probability Theory and
  Related Fields \textbf{162} (2015), no.~3, 707--738.

\bibitem{genon2021parametric}
Valentine Genon-Catalot and Catherine Lar{\'e}do, \emph{Parametric inference
  for small variance and long time horizon {M}ckean--{V}lasov diffusion
  models}, Electronic Journal of Statistics \textbf{15} (2021), no.~2,
  5811--5854.

\bibitem{GENONCATALOT2021}
Valentine Genon-Catalot and Catherine Lar\'{e}do, \emph{Probabilistic
  properties and parametric inference of small variance nonlinear
  self-stabilizing stochastic differential equations}, Stochastic Process.
  Appl. \textbf{142} (2021), 513--548. \MR{4324348}

\bibitem{genon2022inference}
Valentine Genon-Catalot and Catherine Lar\'edo, \emph{Inference for ergodic
  {M}c{K}ean-{V}lasov stochastic differential equations with polynomial
  interactions}, Ann. Inst. Henri Poincar\'e{} Probab. Stat. \textbf{60}
  (2024), no.~4, 2668--2693. \MR{4828854}

\bibitem{guillin2019uniform}
Arnaud Guillin, Wei Liu, Liming Wu, and Chaoen Zhang, \emph{Uniform
  {P}oincar\'{e} and logarithmic {S}obolev inequalities for mean field particle
  systems}, Ann. Appl. Probab. \textbf{32} (2022), no.~3, 1590--1614.
  \MR{4429996}

\bibitem{MR3582237}
Seung-Yeal Ha, Jiin Jeong, Se~Eun Noh, Qinghua Xiao, and Xiongtao Zhang,
  \emph{Emergent dynamics of {C}ucker-{S}male flocking particles in a random
  environment}, J. Differential Equations \textbf{262} (2017), no.~3,
  2554--2591. \MR{3582237}

\bibitem{HerrmannImkellerPeithmann2008}
Samuel Herrmann, Peter Imkeller, and Dierk Peithmann, \emph{Large deviations
  and a {K}ramers' type law for self-stabilizing diffusions}, Ann. Appl.
  Probab. \textbf{18} (2008), no.~4, 1379--1423. \MR{2434175}

\bibitem{herrmann2012self}
Samuel Herrmann and Julian Tugaut, \emph{Self-stabilizing processes: uniqueness
  problem for stationary measures and convergence rate in the small-noise
  limit}, ESAIM Probab. Stat. \textbf{16} (2012), 277--305. \MR{2956576}

\bibitem{higham2002strong}
Desmond~J Higham, Xuerong Mao, and Andrew~M Stuart, \emph{Strong convergence of
  {E}uler-type methods for nonlinear stochastic differential equations}, SIAM
  Journal on Numerical Analysis \textbf{40} (2002), no.~3, 1041--1063.

\bibitem{hu2021long}
Shan-Shan Hu, \emph{Long-time behaviour for distribution dependent {SDE}s with
  local {L}ipschitz coefficients}, arXiv preprint arXiv:2103.13101, 2021.

\bibitem{HuangRenWang2021DDSDE-wellposedness}
Xing Huang, Panpan Ren, and Feng-Yu Wang, \emph{Distribution dependent
  stochastic differential equations}, Front. Math. China \textbf{16} (2021),
  no.~2, 257--301. \MR{4254653}

\bibitem{HutzenthalerJentzen2015AMSbook}
Martin Hutzenthaler and Arnulf Jentzen, \emph{Numerical approximations of
  stochastic differential equations with non-globally {L}ipschitz continuous
  coefficients}, Mem. Amer. Math. Soc. \textbf{236} (2015), no.~1112, 99.
  \MR{3364862}

\bibitem{jin2020random}
Shi Jin, Lei Li, and Jian-Guo Liu, \emph{Random batch methods ({RBM}) for
  interacting particle systems}, Journal of Computational Physics \textbf{400}
  (2020), 108877.

\bibitem{kalinin2022stability}
Alexander Kalinin, Thilo Meyer-Brandis, and Frank Proske, \emph{Stability,
  uniqueness and existence of solutions to {M}ckean--{V}lasov {S}des in
  arbitrary moments}, arXiv preprint arXiv:2205.02176, 2022.

\bibitem{kumar2020explicit}
Chaman Kumar and Neelima, \emph{On explicit {M}ilstein-type scheme for
  {M}c{K}ean--{V}lasov stochastic differential equations with super-linear
  drift coefficient}, Electron. J. Probab. \textbf{26} (2021), Paper No. 111,
  32. \MR{4302574}

\bibitem{kumar2020wellposedness}
Chaman Kumar, Neelima, Christoph Reisinger, and Wolfgang Stockinger,
  \emph{Well-posedness and tamed schemes for {M}c{K}ean--{V}lasov equations
  with common noise}, Ann. Appl. Probab. \textbf{32} (2022), no.~5, 3283--3330.
  \MR{4497846}

\bibitem{Lacker2018}
Daniel Lacker, \emph{On a strong form of propagation of chaos for
  {M}c{K}ean-{V}lasov equations}, Electron. Commun. Probab. \textbf{23} (2018),
  Paper No. 45, 11. \MR{3841406}

\bibitem{li2022strong}
Yun Li, Xuerong Mao, Qingshuo Song, Fuke Wu, and George Yin, \emph{Strong
  convergence of {E}uler-{M}aruyama schemes for {M}c{K}ean-{V}lasov stochastic
  differential equations under local {L}ipschitz conditions of state
  variables}, IMA J. Numer. Anal. \textbf{43} (2023), no.~2, 1001--1035.
  \MR{4568438}

\bibitem{liu2022ergodicity}
Zhenxin Liu and Jun Ma, \emph{Existence, uniqueness and exponential ergodicity
  under {L}yapunov conditions for {McK}ean--{V}lasov {SDE}s with {M}arkovian
  switching}, Journal of Differential Equations \textbf{337} (2022), 138--167.

\bibitem{Malrieu2003}
Florent Malrieu, \emph{Convergence to equilibrium for granular media equations
  and their {E}uler schemes}, Ann. Appl. Probab. \textbf{13} (2003), no.~2,
  540--560. \MR{1970276}

\bibitem{mao2008stochastic}
Xuerong Mao, \emph{Stochastic differential equations and applications},
  Horwood, 2008.

\bibitem{mehri2020propagation}
Sima Mehri, Michael Scheutzow, Wilhelm Stannat, and Bian~Z Zangeneh,
  \emph{Propagation of chaos for stochastic spatially structured neuronal
  networks with delay driven by jump diffusions}, Annals of Applied Probability
  \textbf{30} (2020), no.~1, 175--207.

\bibitem{Meleard1996}
Sylvie M{\'e}l{\'e}ard, \emph{{A}symptotic behaviour of some interacting
  particle systems; {M}c{K}ean-{V}lasov and {B}oltzmann models}, Probabilistic
  models for nonlinear partial differential equations, Springer, 1996,
  pp.~42--95.

\bibitem{Mezerdi2021Caratheodory}
Mohamed~Amine Mezerdi, \emph{On the convergence of {C}arath\'{e}odory numerical
  scheme for {M}c{K}ean--{V}lasov equations}, Stoch. Anal. Appl. \textbf{39}
  (2021), no.~5, 804--818. \MR{4310347}

\bibitem{mishura2020existence}
Yuliya Mishura and Alexander Veretennikov, \emph{Existence and uniqueness
  theorems for solutions of {M}c{K}ean-{V}lasov stochastic equations}, Theory
  Probab. Math. Statist. (2020), no.~103, 59--101. \MR{4421344}

\bibitem{neelima2020wellposedness}
Neelima, Sani Biswas, Chaman Kumar, Gon{\c c}alo dos Reis, and Christoph
  Reisinger, \emph{{W}ell-posedness and tamed {E}uler schemes for
  {M}c{K}ean--{V}lasov equations driven by {L}\'evy noise}, arXiv preprint
  arXiv:2010.08585, 2020.

\bibitem{zacONE2023malliavin}
Gon{\c c}alo~dos Reis and Zac Wilde, \emph{Malliavin differentiability of
  {M}c{K}ean-{V}lasov {SDE}s with locally lipschitz coefficients}, arXiv
  preprint arXiv:2310.13400, 2023.

\bibitem{reisinger2020adaptive}
Christoph Reisinger and Wolfgang Stockinger, \emph{An adaptive
  {E}uler--{M}aruyama scheme for {M}c{K}ean--{V}lasov {SDE}s with super-linear
  growth and application to the mean-field {F}itzhugh-{N}agumo model}, Journal
  of Computational and Applied Mathematics \textbf{400} (2022), 113725.

\bibitem{ren2021exponential}
Panpan Ren and Feng-Yu Wang, \emph{Exponential convergence in entropy and
  {W}asserstein for {McK}ean--{V}lasov {SDE}s}, Nonlinear Analysis \textbf{206}
  (2021), 112259.

\bibitem{rockner2018well}
Michael R\"{o}ckner and Xicheng Zhang, \emph{Well-posedness of distribution
  dependent {SDE}s with singular drifts}, Bernoulli \textbf{27} (2021), no.~2,
  1131--1158. \MR{4255229}

\bibitem{2019taming}
Sotirios Sabanis and Ying Zhang, \emph{On explicit order 1.5 approximations
  with varying coefficients: The case of super-linear diffusion coefficients},
  Journal of Complexity \textbf{50} (2019), 84–115.

\bibitem{Sznitman1991}
Alain-Sol Sznitman, \emph{Topics in propagation of chaos}, Ecole d'Et{\'e} de
  Probabilit{\'e}s de Saint-Flour XIX --- 1989 (1991), 165--251.

\bibitem{tse2022}
Lukasz Szpruch and Alvin Tse, \emph{Antithetic multilevel sampling method for
  nonlinear functionals of measure}, The Annals of Applied Probability
  \textbf{31} (2021), no.~3, 1100--1139.

\bibitem{2013doublewell}
Julian Tugaut, \emph{Convergence to the equilibria for self-stabilizing
  processes in double-well landscape}, Ann. Probab. \textbf{41} (2013), no.~3A,
  1427--1460. \MR{3098681}

\bibitem{tugaut2013self}
\bysame, \emph{Self-stabilizing processes in multi-wells landscape in
  $\mathbb{R}^d$-convergence}, Stochastic Processes and Their Applications
  \textbf{123} (2013), no.~5, 1780--1801.

\bibitem{Villani2009oldnew}
C\'{e}dric Villani, \emph{Optimal transport}, Grundlehren der mathematischen
  Wissenschaften [Fundamental Principles of Mathematical Sciences], vol. 338,
  Springer-Verlag, Berlin, 2009, Old and new. \MR{2459454}

\bibitem{Wang2018DDSDE-LandauType}
Feng-Yu Wang, \emph{Distribution dependent {SDE}s for {L}andau type equations},
  Stochastic Process. Appl. \textbf{128} (2018), no.~2, 595--621. \MR{3739509}

\bibitem{wang2021distribution1}
\bysame, \emph{Distribution dependent reflecting stochastic differential
  equations}, Sci. China Math. \textbf{66} (2023), no.~11, 2411--2456.
  \MR{4658659}

\bibitem{wang2021distribution2}
\bysame, \emph{Exponential ergodicity for singular reflecting
  {M}c{K}ean-{V}lasov {SDE}s}, Stochastic Process. Appl. \textbf{160} (2023),
  265--293. \MR{4567526}

\bibitem{MostLIkelyTransitionPath2022}
Wei Wei, Ting Gao, Xiaoli Chen, and Jinqiao Duan, \emph{An optimal control
  method to compute the most likely transition path for stochastic dynamical
  systems with jumps}, Chaos \textbf{32} (2022), no.~5, Paper No. 051102, 10.
  \MR{4422807}

\bibitem{Zeidler1990B}
Eberhard Zeidler, \emph{Nonlinear functional analysis and its applications.
  {II}/{A}}, Springer-Verlag, New York, 1990, Linear monotone operators,
  Translated from the German by the author and Leo F. Boron. \MR{1033497}

\bibitem{zhang2021existence}
Shao-Qin Zhang, \emph{Existence and non-uniqueness of stationary distributions
  for distribution dependent {SDE}s}, Electron. J. Probab. \textbf{28} (2023),
  --. \MR{4613856}

\end{thebibliography}

\providecommand{\bysame}{\leavevmode\hbox to3em{\hrulefill}\thinspace}
\providecommand{\MR}{\relax\ifhmode\unskip\space\fi MR }
\providecommand{\MRhref}[2]{%
  \href{http://www.ams.org/mathscinet-getitem?mr=#1}{#2}
}
\providecommand{\href}[2]{#2}


\end{document}